\numberwithin{equation}{section}
\newtheorem{theorem}{Theorem}[section]
\newtheorem{lemma}[theorem]{Lemma}
\newtheorem{proposition}[theorem]{Proposition}
\theoremstyle{definition}
\newtheorem{remark}[theorem]{Remark}
\newtheorem{definition}[theorem]{Definition}
\renewcommand{\d}{\mathrm{d}}
\renewcommand{\phi}{\varphi}
\newcommand{\0}{\mathbf{0}}
\newcommand{\FF}{\mathbb{F}}
\newcommand{\ZZ}{\mathbb{Z}}
\newcommand{\QQ}{\mathbb{Q}}
\newcommand{\RR}{\mathbb{R}}
\renewcommand{\leq}{\leqslant}
\renewcommand{\geq}{\geqslant}
\newcommand{\x}{\mathbf{x}}
\newcommand{\y}{\mathbf{y}}
\renewcommand{\ss}{\mathfrak{S}}
\newcommand{\al}{\alpha}
\DeclareMathOperator{\rank}{rank}
\DeclareMathOperator{\Mod}{mod} 
\renewcommand{\bmod}[1]{\,(\Mod{#1})}
\newcommand{\minor}{\mathfrak{m}}
\newcommand{\major}{\mathfrak{M}}
\newcommand{\eeq}{\end{equation}}
\newcommand{\beql}[1]{\begin{equation}\label{#1}}
\newcommand{\Z}{\mathbb{Z}}
\newcommand{\Q}{\mathbb{Q}}
\newcommand{\R}{\mathbb{R}}
\newcommand{\C}{\mathbb{C}}
\newcommand{\T}{\mathbb{T}}
\newcommand{\F}{\mathbb{F}}
\newcommand{\N}{\mathbb{N}}
\newcommand{\A}{\mathbb{A}}
\newcommand{\Sing}{\mathrm{Sing}}
\newcommand{\intd}{\mathrm{d}}
\newcommand{\hash}{\#}
\newcommand{\supp}{\mathrm{supp}}
\newcommand{\eps}{\varepsilon}
 \newcommand{\set}[1]{\left\{#1\right\}}
\newcommand{\bigset}[1]{\bigl\{ #1 \bigr\}}
\newcommand{\Bigset}[1]{\Bigl\{ #1 \Bigr\}}
\newcommand{\abs}[1]{\left| #1\right|}
\newcommand{\bigabs}[1]{\bigl| #1 \bigr|}
\newcommand{\Bigabs}[1]{\Bigl| #1 \Bigr|}
\newcommand{\floor}[1]{\left\lfloor #1 \right\rfloor}
\newcommand{\brac}[1]{\left( #1 \right)}
\newcommand{\bigbrac}[1]{\bigl( #1 \bigr)}
\newcommand{\Bigbrac}[1]{\Bigl( #1 \Bigr)}
\newcommand{\norm}[1]{\left\| #1\right\|}
\newcommand{\bignorm}[1]{\big\| #1 \big\|}
\newcommand{\recip}[1]{\frac{1}{#1}}
\newcommand{\trecip}[1]{\tfrac{1}{#1}}
\newcommand{\ve}{\mathbf{e}}
\newcommand{\vx}{\mathbf{x}}
\newcommand{\vy}{\mathbf{y}}
\newcommand{\vmu}{\boldsymbol{\mu}}
\newcommand{\vlambda}{\boldsymbol{\lambda}}
\newcommand{\va}{\mathbf{a}}
\newcommand{\vh}{\mathbf{h}}
\newcommand{\vc}{\mathbf{c}}
\newcommand{\vt}{\mathbf{t}}
\newcommand{\vm}{\mathbf{m}}
\newcommand{\vr}{\mathbf{r}}
\newcommand{\vs}{\mathbf{s}}
\newcommand{\vepsilon}{\boldsymbol{\epsilon}}
\newcommand{\M}{\mathfrak{M}}
\newcommand{\m}{\mathfrak{m}}
\begin{document}

\title[Improvements in Birch's theorem]{Improvements in Birch's theorem on\\ forms in many variables}

\author{T.D.~Browning}
\address{School of Mathematics\\
University of Bristol\\ Bristol\\ BS8 1TW}
\email{t.d.browning@bristol.ac.uk}
\author{S.M.~Prendiville}
\address{Department of Mathematics and Statistics\\University of Reading\\PO Box 220\\ Reading\\RG6 6AX}
\email{s.m.prendiville@reading.ac.uk}

\subjclass[2010]{11P55 (11G35, 14G05)}

\begin{abstract}
We show that a non-singular integral form of degree $d$ is soluble over the integers if and only if it is soluble over $\RR$ and over $\QQ_p$ for all primes $p$, provided that the form
has at least $(d-\frac{1}{2}\sqrt{d})2^d$ variables. 
This improves on a longstanding result of Birch. 
\end{abstract}

\maketitle
\setcounter{tocdepth}{1}
\tableofcontents

\thispagestyle{empty}

\section{Introduction}

Let $F\in \ZZ[x_1,\dots,x_n]$
be 
a homogeneous polynomial  of degree $d\geq 3$.
A fundamental ambition in number theory is to determine when the Diophantine equation 
\begin{equation}\label{eq}
F(x_1,\dots,x_n)=0
\end{equation}
has a non-trivial integral solution.
The Hardy--Littlewood circle method has been extraordinarily effective at answering this question for typical $F$ when the number of variables is sufficiently large in terms of $d$.
An obvious necessary condition for the solubility of \eqref{eq} in integers is that it should be {\em everywhere locally soluble}, by which we mean that it has non-trivial solutions over $\RR$ and $\QQ_p$ for every prime $p$.
According to a  renowned result of Birch \cite{birch},  these conditions are sufficient provided that $F$ is non-singular  and 
$
n>(d-1)2^d.
$
It is possible to relax the non-singularity condition  by 
imposing stronger constraints on $n$ and local solubility.  For the latter, 
Birch  asks instead for the system
\begin{equation}\label{eq'}
F(x_1,\dots,x_n)=0, \quad \nabla F(x_1,\dots,x_n)\neq \0
\end{equation}
to be everywhere locally soluble. We say that $F$ satisfies the {\em smooth Hasse principle} if this condition is sufficient to ensure that this system also has a non-trivial  integral solution.
Allowing  $\sigma$ to denote the (affine) dimension of the  {\em singular locus}
 cut out by the system of equations 
$\nabla F(x_1,\dots,x_n)=\0$, it follows from Birch's investigation  \cite{birch} that 
$F$ satisfies the smooth Hasse principle provided that 
\begin{equation}\label{eq:birch}
n-\sigma>(d-1)2^d.
\end{equation}
Note that $\sigma\in \{0,\dots,n-1\}$, with $\sigma=0$ if and only if $F$ is  non-singular.

Birch's theorem has had an extensive impact on number theory, with the underlying 
tools being adapted to handle numerous problems. This includes, but is not limited to:
\begin{itemize}
\item[---]
the vanishing of $F$ on general  $\ZZ$-linear subspaces  (Brandes \cite{brandes});
\item[---]
a generalisation to the  function field $\FF_q[t]$  (Lee \cite{TW-Lee}); 
\item[---]
a generalisation to bihomogeneous forms (Schindler \cite{damaris});
\item[---]
a generalisation to arbitrary number fields (Skinner \cite{skinner}).
\end{itemize}
Activity around 
reducing the lower bound  \eqref{eq:birch} 
for $n-\sigma$
in Birch's original result, however,  has not been so 
vigorous.

The most impressive improvement to date arises in the case $d=3$ of cubic forms. Thus, it follows from work of Hooley \cite{hooley-88} 
that the smooth Hasse principle holds for integral cubic forms provided that $n-\sigma\geq 9$. 
Moreover, Heath-Brown \cite{14} has shown that  any integral cubic form has a non-trivial integer zero provided that $n\geq 14$, with no restriction on the singular locus,  the question of local solubility being automatic. 
The only other improvement to date pertains to the case $d=4$. In this setting, Browning and Heath-Brown \cite{41} have established the smooth Hasse principle for integral quartic forms provided that $n-\sigma\geq 41$, saving $8$ variables over the approach taken by Birch.  Finally, 
this inequality has been sharpened to $n-\sigma\geq 40$ by Hanselmann \cite{markus}.

Our main result 
improves on \eqref{eq:birch} for  every degree.

\begin{theorem}\label{general d improvement theorem}  
Let $F \in \Z[x_1, \dots, x_n]$ be a form of degree $d\geq 3$ with singular locus of dimension $\sigma$.  
Suppose that
$$
n - \sigma \geq \Bigbrac{d - \trecip{2} \sqrt{d}} 2^d.
$$
Then the smooth Hasse principle holds for $F$.
\end{theorem}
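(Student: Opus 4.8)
The plan is to run the Hardy--Littlewood circle method, following Birch's original paradigm but optimising the key analytic input. Write $N(P)$ for the number of integer points $\vx$ in a box of side $P$ with $F(\vx)=0$, weighted if convenient. Applying the Weyl differencing process to the exponential sum $S(\alpha)=\sum_{\vx} e(\alpha F(\vx))$ produces, after $d-1$ differences, a bound of the shape $|S(\alpha)|^{2^{d-1}} \ll P^{(d-1)2^{d-1}+\eps}\bigl(P^{-n}\cdot\#\{\text{small }\vh_1,\dots,\vh_{d-1}: \|\alpha\,M(\vh_1,\dots,\vh_{d-1})\|\text{ small}\}\bigr)$, where $M$ is the multilinear form associated to $F$. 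The quality of the resulting minor-arc estimate is governed by how the number of such $\vh$-tuples compares with its expected size, and this is where the dimension of the singular locus enters: Birch shows that whenever $|S(\alpha)|$ is large, either $\alpha$ admits a good rational approximation (a major arc), or else the $\vh$-tuples concentrate on the affine variety cut out by $\nabla F$, whose dimension is $\sigma$. The standard bookkeeping then forces the condition $n-\sigma>(d-1)2^d$.

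The improvement will come from replacing the crude $2^{d-1}$-fold van der Corput/Weyl iteration by a more efficient differencing scheme — effectively using the flexibility in choosing which variables to difference and combining Weyl differencing with a Cauchy--Schwarz step so as to reduce the exponent loss. The target is to obtain, in place of the exponent $(d-1)2^d$, the exponent $(d-\tfrac12\sqrt d)2^d$; the $\tfrac12\sqrt d$ saving is the hallmark of optimising the number of differencing steps against the number of Cauchy--Schwarz applications, balancing the two at roughly $\sqrt d$ each. Concretely, I would first establish a Weyl-type inequality: if $|S(\alpha)|\geq P^{n-K}$ for a suitable parameter $K$, then there is $q\ll P^{\Delta}$ and $a$ with $\gcd(a,q)=1$ and $|q\alpha-a|\ll P^{\Delta-d}$, where the admissible range of $K$ in terms of $n$, $\sigma$, and $d$ is exactly what the refined differencing buys us. This is the technical heart of the paper and almost certainly where the bulk of the work lies.

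With the Weyl inequality in hand, the remaining steps are comparatively standard. I would pass to a Farey dissection of the unit interval into major and minor arcs at level $P^{\Delta}$; bound the total minor-arc contribution by feeding the Weyl inequality into the usual $\int |S(\alpha)|\,d\alpha \leq \max_{\text{minor}}|S(\alpha)|^{1-\delta}\bigl(\int|S(\alpha)|^{\delta'}\,d\alpha\bigr)$-type estimate, checking that under $n-\sigma\geq(d-\tfrac12\sqrt d)2^d$ this is $o(P^{n-d})$; and then evaluate the major-arc contribution in the classical way, producing the main term $\mathfrak{S}\,\mathfrak{J}\,P^{n-d}$ as a product of a singular series and a singular integral. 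The everywhere-local-solubility hypothesis, together with the smoothness condition $\nabla F\neq\mathbf 0$ in \eqref{eq'}, guarantees $\mathfrak{S}>0$ (via $\mathfrak{p}$-adic densities, with the singular locus condition ensuring the relevant $p$-adic manifolds are nonsingular) and $\mathfrak{J}>0$ (via the real solubility and the implicit function theorem). Hence $N(P)\to\infty$, which yields a nontrivial integral solution of \eqref{eq'} and completes the proof. The main obstacle, to reiterate, is the refined Weyl inequality: one must verify that the gain of $\tfrac12\sqrt d$ in the exponent survives the interaction between the differencing geometry and the dimension $\sigma$ of the singular locus, i.e.\ that the variety on which the difference-tuples concentrate still has dimension controlled by $\sigma$ after the modified iteration.
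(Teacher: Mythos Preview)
Your overall scaffold (circle method, major arcs via singular series and integral, local solubility giving positivity) is correct and matches the paper. But the description of the minor-arc input has a genuine gap: what you call a ``refined differencing scheme\ldots combining Weyl differencing with a Cauchy--Schwarz step'' is not a new technique --- Weyl differencing \emph{is} Cauchy--Schwarz, and iterating it $d-1$ times is exactly Birch's argument, which yields $(d-1)2^d$ and no better. You have not named a concrete mechanism that produces the $\tfrac12\sqrt d$ saving, and the dichotomy ``large $|S(\alpha)|$ forces major arc or concentration on $\operatorname{Sing}(F)$'' is the Birch paradigm, not an improvement of it.

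The paper's mechanism is quite different and rests on two ingredients you do not mention. First, rather than a single Weyl bound, one van der Corput differences only $d-k$ times (for each $3\le k\le d$), leaving a degree-$k$ sum to which the Weyl bound is applied; this gives a family of estimates $W_k(\alpha,q)$ (Proposition~\ref{birch + van der corput}). Second, and crucially, one also differences down to degree $3$ and then applies a \emph{Poisson summation} bound for cubic sums coming from Browning--Heath-Brown (Lemma~\ref{cubic poisson bound}, Proposition~\ref{vdc + cubic lemma}); this introduces the quantity $\eta_q$ built from the factorisation $q=bc_1^2c_2$, and the cube-full structure of $q/b$ allows one to show $\#\{q:\eta_q\le R\}\ll R^{A_d}$ with $A_d\approx \tfrac{7}{16}2^d$ (Lemma~\ref{restricted theta_q estimate}), far better than the trivial count. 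The minor-arc integral is then controlled by the pointwise minimum $\min\{C(\alpha,q),W_3,\dots,W_d\}$ via an inductive argument (Lemma~\ref{inductive argument lemma}); the condition \eqref{reformulated l condition} that emerges has right-hand side at most $d2^d-(\tfrac{d}{4k}+\tfrac{k}{4})2^d$, and the AM--GM balance $k\approx\sqrt d$ gives the saving. Without the Poisson input and the $\eta_q$ count, the $q$-summation in the minor-arc integral cannot be closed under your hypothesis, and the argument as you have sketched it would not go through.
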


As we shall see shortly the proof of this result is based on a generalisation of the method in \cite{41}.
One verifies that the admissible range for $n$ is weaker than that provided by \cite[Thm.~1]{41} when $d=4$. 
In fact, for smaller values of $d$ we are able to get a much more significant 
improvement,  as in the following result.

\begin{theorem}\label{small d improvement theorem}  
Let $F \in \Z[x_1, \dots, x_n]$ be a form of degree $d\in \{3,4,5,6,7,8,9\}$ with singular locus of dimension $\sigma$.  
Suppose that
$$
n - \sigma > \tfrac{3}{4} d 2^d - 2d.
$$
Then the smooth Hasse principle holds for $F$.
\end{theorem}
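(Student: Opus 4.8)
The plan is to prove Theorem~\ref{small d improvement theorem} by the Hardy--Littlewood circle method, running the same overall scheme that will prove Theorem~\ref{general d improvement theorem} --- itself a generalisation of the method of Browning and Heath-Brown \cite{41} --- but feeding it a sharper minor-arc estimate that is only available in the range $3\leq d\leq 9$. After standard preliminary reductions one uses solubility of \eqref{eq'} over $\RR$ to fix a real zero $\x_0$ of $F$ with $\nabla F(\x_0)\neq\0$, chooses a smooth weight $w\geq 0$ supported in a small ball about $\x_0$, and studies
$$
N_w(P)=\sum_{\x\in\ZZ^n}w(\x/P)\int_0^1\e\bigl(\alpha F(\x)\bigr)\,\d\alpha=\int_0^1 S(\alpha)\,\d\alpha,\qquad S(\alpha):=\sum_{\x\in\ZZ^n}w(\x/P)\,\e\bigl(\alpha F(\x)\bigr),
$$
after a Dirichlet dissection of $[0,1]$ into major arcs $\major$ about rationals $a/q$ with $q\leq P^{\Delta}$ and complementary minor arcs $\minor$, where $\Delta=\Delta(d)>0$ is to be chosen.

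On $\major$ the usual treatment gives $\int_{\major}S(\alpha)\,\d\alpha=\ss\,\mathfrak{I}\,P^{n-d}+o(P^{n-d})$, with $\ss$ the singular series and $\mathfrak{I}$ the singular integral. Here $\mathfrak{I}>0$ because $\x_0$ is a non-singular real point, while $\ss>0$ because everywhere $p$-adic solubility of \eqref{eq'} supplies, for each $p$, a $p$-adic zero with non-vanishing gradient, whence Hensel's lemma (together with the Lang--Weil bound for large $p$) makes the $p$-adic densities bounded away from $0$ uniformly in $p$. The absolute convergence of $\ss$, and the estimate needed to control the tail of the dissection, follow from the complete-sum analogue of the minor-arc bound discussed next.

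Everything therefore reduces to showing $\int_{\minor}\abs{S(\alpha)}\,\d\alpha=o(P^{n-d})$, and this rests on a Weyl-type bound for $S(\alpha)$ on $\minor$ that improves on the one in \cite{birch}. Birch differences $d-1$ times --- each difference variable running over a box of side $P$ --- landing on a \emph{linear} exponential sum in $\x$ and producing the exponent $(d-1)2^d$ after a geometry-of-numbers count of the admissible difference tuples. Generalising the quartic argument of \cite{41} (and the cubic argument of Heath-Brown \cite{14}), I would instead difference only $d-2$ times, landing not on a linear but on a \emph{quadratic} form $\x\mapsto Q_{\h_1,\dots,\h_{d-2}}(\x)$ whose coefficients are multilinear of total degree $d-2$ in the difference variables. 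A quadratic exponential sum can be evaluated essentially exactly through Gauss sums, its size being controlled by the rank --- over $\RR$ and modulo $q$ --- of the symmetric matrix underlying $Q_{\h_1,\dots,\h_{d-2}}$; the task thus becomes the geometric one of bounding how often, as $\h_1,\dots,\h_{d-2}$ range over boxes (which one allows to have side lengths that diminish from one variable to the next), this matrix fails to have large rank. That frequency is governed by the dimension of the locus on which the matrix degenerates, and it is precisely here that the singular-locus dimension $\sigma$ re-enters. Carrying out the rank count, optimising the difference ranges, and converting ``$\abs{S(\alpha)}$ large'' into ``$\alpha$ lies near a rational with small denominator'' in the usual way, one arrives at a minor-arc bound that is $o(P^{n-d})$ precisely when $n-\sigma>\tfrac{3}{4}d\,2^d-2d$: the constant $\tfrac{3}{4}$ reflects the trade-off between the $2^{d-2}$ applications of Cauchy--Schwarz and the cancellation supplied automatically by the quadratic Gauss sum, and the $-2d$ comes from treating the endpoints of the differencing optimally.

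I expect the main obstacle to be exactly this optimisation, carried out uniformly in $3\leq d\leq 9$: one must choose the diminishing difference ranges so that the loss at each Cauchy--Schwarz step is compensated, keep every range non-degenerate, and --- most delicately --- ensure that the final count of near-degenerate quadratic forms genuinely delivers a Diophantine approximation to $\alpha$ rather than a weaker conclusion, all while correctly tracking the role of $\sigma$. It is this that simultaneously fixes the constant $\tfrac{3}{4}$ and forces the cut-off $d\leq 9$: for larger $d$ the parameters of the ``difference down to a quadratic'' step are pushed into a degenerate regime and the bound ceases to beat Theorem~\ref{general d improvement theorem}, whose more robust but weaker $(d-1)$-fold differencing still yields $(d-\tfrac{1}{2}\sqrt{d})2^d$. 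Once the minor arcs are negligible one has $N_w(P)\sim\ss\,\mathfrak{I}\,P^{n-d}\to\infty$; shrinking the support of $w$, and using that the integral points of $F$ lying on $\nabla F=\0$ contribute only $o(P^{n-d})$ to $N_w(P)$ (again by the size of $n-\sigma$), the non-trivial integer zero thereby produced can be taken with $\nabla F\neq\0$, which is the smooth Hasse principle for $F$.
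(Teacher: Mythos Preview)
Your overall circle-method architecture and the major-arc treatment are fine and indeed match the paper. The genuine gap is in the minor-arc engine you propose.

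You plan to difference $d-2$ times down to a \emph{quadratic} exponential sum and then exploit Gauss-sum cancellation, arguing that the constant $\tfrac{3}{4}$ reflects a trade-off between $2^{d-2}$ Cauchy--Schwarz losses and square-root cancellation in the quadratic sum. This is not what the paper does, and more importantly it is not what \cite{41} does either: in \cite{41} one differences the quartic once to land on a \emph{cubic} and then applies Poisson summation (not a further differencing, and not Gauss sums). The present paper generalises this to difference $d-3$ times down to a cubic and apply the cubic Poisson bound of \cite{41}; this produces the quantity $\eta_q$ built from the factorisation $q=bc_1^2c_2$, and controlling it requires a separate counting argument for $q$ with $\eta_q$ small. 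No single bound suffices on its own: the paper needs a second family of estimates, obtained by differencing $d-k$ times and applying a Weyl-type bound to the resulting degree-$k$ sum for \emph{each} $k$ from $3$ up to $d$, and then takes the pointwise minimum of all of these with the cubic-Poisson bound. The minor-arc integral is dispatched by an inductive argument (downwards in $k$) that passes from one bound to the next, and the condition $n-\sigma>\tfrac{3}{4}d\,2^d-2d$ for $3\le d\le 9$ emerges not from one clean trade-off but from checking a finite list of numerical inequalities --- one for each $k$ in $\{3,\dots,d\}$ --- by computer.

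So the concrete missing ingredients are: (i) the cubic Poisson bound and the associated $\eta_q$-analysis of $q$, (ii) the hierarchy of ``van der Corput $+$ Weyl at level $k$'' bounds, and (iii) the combinatorial/inductive mechanism for combining them. A single ``difference to quadratic'' step will not cover all minor-arc regimes, and your heuristic derivation of the $\tfrac{3}{4}$ and of the cut-off $d\le 9$ does not correspond to how these constants actually arise.
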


The question of determining when the system 
\eqref{eq'} is everywhere locally soluble is far from being decided. 
Denoting by  $\nu_d(p)$  the least integer $n$ such that every degree $d$
form  $F \in \Z[x_1, \dots, x_n]$ has a zero in $\QQ_p$,  Artin conjectured that $\nu_d(p)=d^2+1$ for every prime $p$ (this is known to be false for even $d$ but is still open for forms of odd degree).  
Specialising to the case $d=5$ of quintic forms, where solubility over $\RR$ is automatic, 
it was shown by
Leep and Yeoman \cite{leep} that $\nu_5(p)=26$ for $p\geq 47$. This was strengthened by Heath-Brown
\cite{hb-padic}, so that this
 equality holds for 
$p\geq 17$. 
In particular, when  Theorem \ref{small d improvement theorem} is applied to non-singular quintic forms,  it suffices to check  the solubility over $\QQ_p$ for primes $p\leq 13.$ In this range, the best result  we have is due to 
Zahid \cite{zahid}, who establishes that 
$\nu_5(p)\leq 4562912$ for $p\leq 13$.

We shall give an overview of the proof of our main  results in \S\ref{overview}.
Taking  $d=3$ in Theorem \ref{small d improvement theorem}, we obtain the Hasse principle for non-singular cubic forms in at least $13$ variables. 
It is no coincidence that this coincides with the constraint arising in Skinner's work \cite{skinner13} on non-singular cubic forms over number fields. Indeed, when $d=3$ our proof reduces to the argument in \cite{skinner13} (which over $\QQ$ is Heath-Brown's seminal work \cite{hb10} --- without a Kloosterman refinement).
When $d=4$ the inequality in Theorem~\ref{small d improvement theorem} 
recovers the conclusions of \cite{41} precisely. When $d=5$, for example,  we witness a saving of 
18 variables over Birch's result.  

Birch  \cite{birch} has an analogous result for general systems of integral forms $F_1,\dots,F_R$ of equal degree. 
It would be interesting to determine whether the methods of this paper can be developed to produce comparable improvements  for $R>1$.  Similarly, once suitably modified,   it 
is natural to hope that our argument  yields corresponding improvements in  the generalisations \cite{brandes, TW-Lee, damaris, skinner} discussed above.

\subsection*{Acknowledgements}
While working on this paper the authors were 
supported by the Leverhulme Trust and ERC grant \texttt{306457}.

\section{Preliminaries}\label{overview}

Our proof of 
Theorems \ref{general d improvement theorem} 
and~\ref{small d improvement theorem} 
proceeds via the Hardy--Littlewood circle method.  In this section we outline the strategy of the proof, together with some    conventions regarding notation and  some preliminary technical results.

The overall   goal is to establish an asymptotic formula
for the quantity 
\begin{equation}\label{eq:def-N}
N_\omega(F;P):= \sum_{\substack{\x\in \ZZ^n\\ F(\x)=0}} 
\omega(\x/P),  
\end{equation}
as $P\rightarrow \infty$, for a suitable weight function $\omega:\RR^n
\rightarrow [0,\infty)$.  We show that under the assumptions of Theorem \ref{general d improvement theorem} or \ref{small d improvement theorem} on $n-\sigma$, there is a  constant $c_F>0$ such that 
$$
N_\omega(F;P)\sim c_F  P^{n-d},
$$
provided that the system \eqref{eq'} is everywhere locally soluble.

Our  starting point 
is the  identity
$$
N_\omega(F;P)=\int_\T S(\alpha, P) \d\alpha,  
$$
where $\T = \R/\Z$ and
\begin{equation}\label{eq:def-S}
S(\alpha, P) := \sum_{\vx \in \Z^n} \omega(\vx/P) e(\alpha F(\vx)).
\end{equation}
The idea is then to divide the torus $\T$ into a
set of major arcs $\major$ and minor arcs $\minor$. 
Given $\Delta > 0$ we define the major arcs $\M =\M(\Delta)$ to be the set
\begin{equation}\label{major arc def}
\M  =  \bigcup_{q \leq P^\Delta} \bigcup_{\substack{ a \leq q \\ (a,q) = 1}}\set{\alpha \in \T : \bignorm{\alpha- \tfrac{a}{q}} \leq P^{\Delta-d}}.
\end{equation}
These are non-overlapping provided that $\Delta<\frac{d}{3}$.
We define the minor arcs to be their complement 
$
\m = \T\setminus\M$.  
In the usual way we seek to prove an asymptotic formula 
\begin{equation}\label{eq:major}
\int_{\major} S(\alpha,P) \d \alpha\sim c_{F} P^{n-d},
\end{equation}
as $P\rightarrow \infty$, together with a satisfactory bound on the minor arcs
\begin{equation}\label{eq:minor}
\int_{\minor} S(\alpha,P) \d \al  
=o( P^{n-d}).
\end{equation}
Here the constant $c_{F}$ turns out to be a product of local densities which will be positive if 
the system \eqref{eq'} is everywhere locally soluble.
The  treatment of \eqref{eq:major} is standard and is the focus of \S \ref{major arc section}.  

Our main innovation lies in our treatment of \eqref{eq:minor}. The plan is to  develop
extensively 
 the approach adopted in  \cite{41} to estimate $S(\alpha,P)$ when $F$ is a quartic form. 
This  relied on a single application of van der Corput differencing to get a family of exponential sums involving cubic polynomials. These were then estimated directly using Poisson summation, rather than through further differencing operations. 
In our work, which deals with forms of degree $d$,  we produce two key estimates for $S(\alpha,P)$ 
in \S \ref{s:exp}. The first 
(Proposition~\ref{birch + van der corput}) is obtained via 
$d-k$ applications of  van der Corput differencing together 
with an application of Birch's bound from \cite{birch}
(suitably modified), as it applies to 
exponential sums with underlying polynomials of degree at most $k$.
The second result (Proposition \ref{vdc + cubic lemma}) is  proved using  $d-3$  applications of  van der Corput differencing  together with the bound for cubic exponential sums from \cite{41} obtained using   Poisson summation.

The final treatment of \eqref{eq:minor} is carried out in \S \ref{s:minor}.
It is somewhat  disappointing that we are unable to cover all of the  minor arcs when 
$n-\sigma>
\tfrac{3}{4} d 2^d - 2d$ for any $d\geq 3$.
As we shall see in 
Remark \ref{remark}, however,   the criterion that emerges from our 
deliberations requires $n-\sigma$ to be 
 asymptotically $d 2^d$.

\medskip

The remainder of this section is taken up with introducing notation and proving some preliminary technical results. 
Given $\vepsilon \in (\N\cup\{0\})^n$ and a sufficiently differentiable function $g :\R^n \to \C$, put 
$$
\partial^{\vepsilon} g=
\frac{\partial^{\epsilon_1 + \dots + \epsilon_n}g}{\partial x_1^{\epsilon_1} \dotsm \partial x_n^{\epsilon_n}}.
$$
The following result follows from partial summation and induction on the dimension.

\begin{lemma}[Partial summation formula]\label{partial summation lemma}{\ }
Let  $\phi : \set{1, \dots, N}^n \to \C$ be a function and let 
$$
T_\phi(\vt) := \sum_{1 \leq x_1 \leq t_1} \dots \sum_{1 \leq x_n \leq t_n} \phi(\vx).
$$
Then for any $g \in C^n(\R^n)$ we have
\begin{align*}
\sum_{1 \leq \vx \leq N} &g(\vx) \phi(\vx)\\
& = \sum_{\vepsilon \in \set{0,1}^n} \frac{(-N)^{\epsilon_1 + \dots + \epsilon_n} }{N^{n}} \int_{[0,N]^{n}}\partial^{\vepsilon} g\bigbrac{N \overline{\vepsilon} + \vt_{\vepsilon}} T_\phi\bigbrac{N \overline{\vepsilon} + \vt_{\vepsilon}} \intd\vt,
\end{align*}
where 
$\overline{\vepsilon}=(1, 1, \dots, 1) - \vepsilon$ and 
 $\vt_{\vepsilon}$ denotes  the vector whose $i$th coordinate equals zero if $\epsilon_i = 0$ and equals $t_i$ if $\epsilon_i = 1$.  
\end{lemma}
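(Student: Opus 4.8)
The plan is to prove this by induction on the dimension $n$, with the base case $n=1$ being the classical Abel summation formula in integral form. For $n=1$, given $\phi:\{1,\dots,N\}\to\C$ and $g\in C^1(\R)$, one writes $\sum_{x=1}^N g(x)\phi(x) = g(N)T_\phi(N) - \int_0^N g'(t)T_\phi(t)\,\d t$, using that $T_\phi$ is the step function constant on $[k,k+1)$ with value $\sum_{x\le k}\phi(x)$ and that $\phi(x) = T_\phi(x) - T_\phi(x-1)$. The term $g(N)T_\phi(N)$ must then be rewritten as an integral to match the claimed shape: since $T_\phi(N)$ is constant on $[N-1,N]$... actually the cleaner route is to write $g(N) = \frac1N\int_0^N \bigl(g(N) \bigr)\,\d t$ is not quite it either; instead note $\int_{[0,N]} \partial^0 g(N\bar\ee + \vt_\ee)T_\phi(N\bar\ee+\vt_\ee)\,\d t$ with $\ee=0$ gives $\int_0^N g(N)T_\phi(N)\,\d t = Ng(N)T_\phi(N)$, so the factor $\frac{(-N)^0}{N^1} = \frac1N$ recovers $g(N)T_\phi(N)$, and the $\ee=1$ term gives $\frac{-N}{N}\int_0^N g'(t)T_\phi(t)\,\d t = -\int_0^N g'(t)T_\phi(t)\,\d t$. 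So the $n=1$ case is exactly Abel summation repackaged into the stated normalization.

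For the inductive step, I would fix the last variable $x_n$ and apply the one-dimensional identity in the $x_n$-direction to the function $x_n\mapsto g(x_1,\dots,x_n)\psi(x_1,\dots,x_n)$ where $\psi$ absorbs the inner partial sums over $x_1,\dots,x_{n-1}$; more precisely, one first applies the $(n-1)$-dimensional inductive hypothesis to the first $n-1$ variables (treating $x_n$ as a parameter and $g(\cdot,x_n)$ as the test function), producing a sum over $\vepsilon'\in\{0,1\}^{n-1}$ of integrals against $T_\phi(\cdot,x_n)$ where now only the first $n-1$ coordinates of the truncation point have been shifted; then one applies the $n=1$ identity in $x_n$ to each resulting term, exchanging summation over $x_n$ with the integral over $\vt'$ (justified since the inner integrand is a finite combination of continuous functions times the bounded step function $T_\phi$). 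Combining the $\{0,1\}^{n-1}$-indexed family with the new $\{0,1\}$-choice in the $n$th slot yields exactly the sum over $\vepsilon\in\{0,1\}^n$, with the sign $(-N)^{\epsilon_1+\dots+\epsilon_n}$ and the normalization $N^{-n}$ multiplying correctly, and with $T_\phi$ evaluated at $N\bar\vepsilon + \vt_\vepsilon$ as claimed.

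The main obstacle is purely bookkeeping: one must check that when the inductive hypothesis is applied in the first $n-1$ variables, the function $T_\phi(\cdot,x_n)$ arising there — the partial-sum function of the \emph{restriction} of $\phi$ to fixed last coordinate — assembles correctly under the subsequent $x_n$-summation into $T_\phi$ of the full $n$-variable function, and that the shifts $N\bar\vepsilon + \vt_\vepsilon$ in the first $n-1$ coordinates are compatible with the shift introduced in the $n$th coordinate by the one-dimensional step, i.e.\ that the Fubini interchange and the reindexing of $\{0,1\}^{n-1}\times\{0,1\}$ as $\{0,1\}^n$ respects the vector $\vt_\vepsilon$ and $\bar\vepsilon$ notation. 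Since all sums are finite and $T_\phi$ is bounded while $g\in C^n$, no analytic subtleties arise and the interchange of sum and integral is immediate; the only real care needed is in tracking indices and signs, which is why the statement is presented as a consequence of "partial summation and induction on the dimension" without further comment.
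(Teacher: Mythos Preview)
Your proposal is correct and follows exactly the approach the paper indicates: the paper's own ``proof'' consists of the single sentence that the result follows from partial summation and induction on the dimension, and your plan spells out precisely that argument, correctly verifying the $n=1$ case against the stated normalisation and outlining the inductive step with the appropriate bookkeeping.
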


When $\alpha \in \R$ we write $\norm{\alpha}$ for the distance from $\alpha$ to the nearest integer, a function which induces a metric on $\T = \R/\Z$ via $d(\alpha, \beta) = \norm{\alpha - \beta}$.  We use absolute values  $|\vx|$ to denote the norm $\max_i |x_i|$
\begin{lemma}[Shrinking lemma]\label{shrinking lemma}
Given a symmetric $n\times n$ real matrix $A$, define $N_A(H, \lambda)$ to be the number of $\vh \in \Z^n$ satisfying $|\vh| \leq H$ and $\norm{(A\vh)_j} \leq \lambda$ for all $j$.  Then for any $H \geq 1$, $\lambda \in (0, 1/H]$ and $\theta \in (0, 1]$, we have the estimate
$$
N_A(H, \lambda) \ll (H^{-1} + \theta)^{-n} N_A(\theta H, \theta \lambda).
$$
\end{lemma}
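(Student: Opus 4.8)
\emph{Plan of proof.} The idea is to recast $N_A(H,\lambda)$ as a count of points of a unimodular lattice in a symmetric box, to estimate such counts through their successive minima, and to use the symmetry of $A$ — via Mahler's transference principle — to force the $2n$ minima to split into a ``small'' half and a ``large'' half. Two degenerate cases are disposed of first. If $H\le 3$ then $N_A(H,\lambda)\le\#\{\vh\in\Z^n:|\vh|\le H\}\ll_n 1$, while $N_A(\theta H,\theta\lambda)\ge 1$ and $(H^{-1}+\theta)^{-n}\gg 1$; and if $H>3$ but $\theta H<1$ then $N_A(\theta H,\theta\lambda)=1$ whereas $N_A(H,\lambda)\ll_n H^n$ and $(H^{-1}+\theta)^{-n}\gg H^n$ since $\theta<H^{-1}$. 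So we may assume $H>3$ and $\theta H\ge1$; then $(H^{-1}+\theta)^{-n}\asymp\theta^{-n}$ and $\lambda\le 1/H<1/2$, and it suffices to prove $N_A(H,\lambda)\ll_n\theta^{-n}N_A(\theta H,\theta\lambda)$.

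Set $\Lambda:=\{(\vh,A\vh-\vk):\vh,\vk\in\Z^n\}\subset\R^{2n}$, a lattice of covolume $1$, and $K:=[-H,H]^n\times[-\lambda,\lambda]^n$. Since $\lambda<1/2$, a point of $\Lambda$ lies in $K$ precisely when its first $n$ coordinates form a vector counted by $N_A(H,\lambda)$ (the remaining coordinates being then forced), so $N_A(H,\lambda)=\#(\Lambda\cap K)$ and likewise $N_A(\theta H,\theta\lambda)=\#(\Lambda\cap\theta K)$. Let $\mu_1\le\dots\le\mu_{2n}$ denote the successive minima of $K$ with respect to $\Lambda$. By the standard estimate for lattice points in a symmetric convex body (a consequence of Minkowski's second theorem) one has $\#(\Lambda\cap K)\asymp_n\prod_i\max(1,\mu_i^{-1})$, and since the minima of $\theta K$ are $\mu_i/\theta$ also $\#(\Lambda\cap\theta K)\asymp_n\prod_i\max(1,\theta\mu_i^{-1})$. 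The ratio of the $i$th factors is $1$ when $\mu_i\ge1$ and equals $\min(\mu_i^{-1},\theta^{-1})$ when $\mu_i<1$, so
\[
\frac{\#(\Lambda\cap K)}{\#(\Lambda\cap\theta K)}\asymp_n\prod_{i:\,\mu_i<1}\min(\mu_i^{-1},\theta^{-1}).
\]
It therefore suffices to show that at most $n$ of the $\mu_i$ fall below a constant $c_n>0$: the (at most $n$) factors with $\mu_i<c_n$ contribute at most $\theta^{-n}$, while those with $\mu_i\in[c_n,1)$ number at most $n$ and contribute $\ll_n 1$.

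The claim $\mu_{n+1}\gg_n 1$ is the heart of the argument, and here the hypothesis that $A$ is symmetric is essential. A direct computation gives the dual lattice $\Lambda^*=\{(\vp,\vq):\vq\in\Z^n,\ \vp+A^{\top}\vq\in\Z^n\}$, so when $A^{\top}=A$ the linear involution $\psi(\vx,\vy)=(-\vy,\vx)$ (of determinant $1$) satisfies $\psi(\Lambda)=\Lambda^*$. On the other hand the polar body $K^\circ$ lies between constant multiples of $B:=[-H^{-1},H^{-1}]^n\times[-\lambda^{-1},\lambda^{-1}]^n$, and $\psi^{-1}$ carries $B$ onto $[-\lambda^{-1},\lambda^{-1}]^n\times[-H^{-1},H^{-1}]^n=(\lambda H)^{-1}K$; hence $\mu_i(K^\circ,\Lambda^*)=\mu_i(\psi^{-1}K^\circ,\Lambda)\asymp_n(\lambda H)\mu_i$. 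Plugging this into Mahler's transference inequality $\mu_i(K,\Lambda)\,\mu_{2n+1-i}(K^\circ,\Lambda^*)\asymp_n 1$ yields $\mu_i\mu_{2n+1-i}\asymp_n(\lambda H)^{-1}\ge1$ for every $i$; in particular $\mu_{n+1}^2\ge\mu_n\mu_{n+1}\gg_n1$, so $\mu_i\gg_n1$ for all $i>n$. Combined with the previous paragraph this gives $\#(\Lambda\cap K)\ll_n\theta^{-n}\#(\Lambda\cap\theta K)$, as required.

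The principal obstacle is exactly the step $\mu_{n+1}\gg_n1$. Without it, the same method only produces the loss $\theta^{-2n}$ — a priori all $2n$ minima could be small — and a naive pigeonhole of the box $|\vh|\le H$ likewise gives only this weaker bound, since shrinking $\lambda$ to $\theta\lambda$ is not for free. What rescues the sharp exponent $n$ is the rigidity conferred by the symmetry of $A$, namely $\psi(\Lambda)=\Lambda^*$, which together with the constraint $\lambda H\le1$ makes the successive minima of $K$ occur in reciprocal pairs.
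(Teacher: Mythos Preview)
Your proof is correct. The paper's own argument is essentially a one-line reduction: it quotes Davenport \cite[Lemma~12.6]{dav} for the core inequality $N_A(H,\lambda)\ll\theta^{-n}N_A(\theta H,\theta\lambda)$, and then disposes of the two degenerate ranges $\theta\ge H^{-1}$ and $\theta<H^{-1}$ exactly as you do at the start. What you have written is a self-contained derivation of the cited Davenport lemma itself --- recasting the count as lattice points in a box, invoking the standard successive-minima estimate $\#(\Lambda\cap K)\asymp_n\prod_i\max(1,\mu_i^{-1})$, and then using the symmetry of $A$ together with Mahler's transference to force $\mu_{n+1}\gg_n 1$. This is precisely the geometry-of-numbers argument that underlies Davenport's result, so the two approaches are the same at the level of ideas; your version simply unpacks the black box. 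The benefit of your write-up is that it makes transparent exactly where the hypotheses $A=A^{\top}$ and $\lambda H\le 1$ enter (namely in identifying $\psi(\Lambda)=\Lambda^*$ and in ensuring the reciprocal pairs $\mu_i\mu_{2n+1-i}\gg_n 1$), whereas the paper hides these inside the citation.
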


\begin{proof}  This result is a consequence of a result by Davenport \cite[Lemma 12.6]{dav}, which is proved using the geometry of numbers.  To be precise 
the statement of \cite[Lemma 12.6]{dav}  gives the bound
$
N_A(H, \lambda) \ll \theta^{-n} N_A(\theta H, \theta \lambda).
$
To see how this implies the lemma, first note that if $\theta \geq H^{-1}$ then 
$
\theta^{-1} \leq 2(H^{-1} + \theta)^{-1}
$ 
and we are done.  Next, if $\theta < H^{-1}$ then $N_A(\theta H, \theta \lambda) = 1$ and 
$
(H^{-1} + \theta)^{-1} > \trecip{2} H.
$  
Therefore the trivial estimate gives
\begin{align*}
(H^{-1} + \theta)^{-n} N_A(\theta H, \theta \lambda) & \geq 2^{-n} H^n
 \gg N_A(H, \lambda),
\end{align*}
as required.
\end{proof}

Our next result involves Diophantine approximation.
 Given $\alpha \in \T$ and $q \in \N$ we say that $\alpha$ and $q$ are \emph{primitive} if there exists $a \in \Z$ with $(a, q) = 1$ such that $\norm{q\alpha} = |q\alpha - a|$.
Notice that if $q$ and $\alpha$ are not primitive then one can find a divisor $q_0$ of $q$ which is primitive to $\alpha$ and which satisfies $\norm{q_0\alpha} \leq \frac{q_0}{q} \norm{q\alpha} < \norm{q\alpha}$.
The following simple result is due to 
Heath-Brown \cite[Lemma~2.3]{14}.

\begin{lemma}\label{heathbrown lemma}  Let $\alpha \in \T$ and $q \in \N$ be primitive.  Suppose that $m \in \Z$ satisfies:
\emph{
\begin{enumerate}[(i)]
\item $|m| < \recip{2} \norm{q\alpha}^{-1}$;
\item $\norm{m\alpha} < \recip{2} q^{-1}$; and 
\item \emph{$|m| < q$ or $\norm{m\alpha} < \norm{q\alpha}$.}
\end{enumerate}}
Then $m = 0$.
\end{lemma}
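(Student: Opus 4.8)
The plan is to argue by contradiction: suppose $m \neq 0$ and derive a contradiction from the three hypotheses together with the primitivity of the pair $(\alpha, q)$. The key algebraic gadget is the quantity $qa' - ma$, where $a \in \Z$ is the integer realising $\norm{q\alpha} = |q\alpha - a|$ (which exists by primitivity, with $(a,q)=1$) and $a' \in \Z$ is a nearest integer to $m\alpha$, so that $\norm{m\alpha} = |m\alpha - a'|$. First I would observe the identity
$$
qa' - ma = q(a' - m\alpha) + m(q\alpha - a),
$$
so that by the triangle inequality
$$
|qa' - ma| \le q\norm{m\alpha} + |m|\norm{q\alpha}.
$$
By hypotheses (i) and (ii) the right-hand side is strictly less than $q \cdot \frac{1}{2}q^{-1} + \frac{1}{2}\norm{q\alpha}^{-1}\cdot\norm{q\alpha} = \frac{1}{2} + \frac{1}{2} = 1$. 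Since $qa' - ma$ is an integer, it must vanish: $qa' = ma$.

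Next I would exploit $(a,q)=1$. From $q \mid ma$ and $\gcd(a,q)=1$ we get $q \mid m$; write $m = qt$ for some $t \in \Z$, and then $a' = ta$. The case $t = 0$ gives $m = 0$, contrary to assumption, so $|t| \ge 1$, hence $|m| \ge q$. By hypothesis (iii) we must then be in the second alternative: $\norm{m\alpha} < \norm{q\alpha}$. But $m\alpha = qt\alpha$ and $a' = ta$, so
$$
\norm{m\alpha} = |m\alpha - a'| = |t|\,|q\alpha - a| = |t|\norm{q\alpha} \ge \norm{q\alpha},
$$
which contradicts $\norm{m\alpha} < \norm{q\alpha}$. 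This completes the contradiction, so $m = 0$.

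The only step requiring genuine care is the very first one: one must check that the integer $a'$ chosen as a nearest integer to $m\alpha$ is the \emph{same} integer appearing in the combination $qa' - ma$, and that no rounding ambiguity (e.g.\ when $\norm{m\alpha} = \tfrac12$) spoils the inequality $q\norm{m\alpha} + |m|\norm{q\alpha} < 1$; hypothesis (ii) is strict precisely to rule this out. I do not anticipate this being a genuine obstacle — it is a routine verification — and indeed the whole argument is short, which is consistent with the lemma being quoted from \cite[Lemma~2.3]{14}. Everything else is the bookkeeping with $\gcd(a,q)=1$ carried out above.
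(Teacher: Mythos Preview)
Your argument is correct. The paper itself does not give a self-contained proof: it simply observes that primitivity supplies $a\in\Z$ with $(a,q)=1$ and $\norm{q\alpha}=|q\alpha-a|$, and then cites \cite[Lemma~2.3]{14} with $P_0=2q$. What you have written is essentially the standard proof of that cited lemma, so you have filled in what the paper leaves as a reference; the identity $qa'-ma=q(a'-m\alpha)+m(q\alpha-a)$ together with $(a,q)=1$ is exactly the mechanism behind Heath-Brown's result.
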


\begin{proof}
Since $\alpha$ and $q$ are primitive, there exists $a \in \Z$ with $(a, q) = 1$ and $\norm{q\alpha} = |q\alpha - a|$.   Our formulation of the lemma now follows from 
\cite[Lemma~2.3]{14} with $P_0=2q$.
\end{proof}

The remaining results in this section involve viewing various varieties that are defined over $\QQ$ over several different finite fields. 
To simplify the exposition, write $\F_\infty$ for $\Q$.  Given a form $G \in \Z[x_1, \dots, x_n]$ and $\nu$ a prime or the prime at infinity, define the \emph{singular locus} of $G$ over $\F_\nu$ to be the algebraic set 
\begin{equation}\label{singular locus}
\Sing_\nu(G) := \bigset{ \vx \in \A_{\F_\nu}^n: \mbox{$\partial^{\ve_i} G(\vx)  = 0$ for $1 \leq i \leq n$}}.
\end{equation}
Here $\ve_i$ denotes the $i$th standard basis vector.  Throughout we use the notation
\begin{equation}\label{sigma dimension notation}
\sigma_\nu(G) := \dim \Sing_{\nu}(G).
\end{equation}
Denote the positive part of a real number $x$ by 
$$
x^+:= \max\set{x, 0}.
$$  
The following is Lemma 1 of Browning and Heath-Brown \cite{41}.

\begin{lemma}\label{dimension bound lemma}
 Let $G \in \Z[x_1, \dots, x_n]$ be a form of degree $d$ whose singular locus over $\F_\nu$ has dimension $\sigma_\nu(G)$.  Define 
 $$
 B_{\nu}(G,s) := \set{\vh \in \A_{\F_\nu}^n : \sigma_\nu( \vh \cdot \nabla G) \geq s}.
 $$
 Then, provided $\nu$ is coprime to $d$, the set $B_{\nu}(G,s)$ is an affine variety, defined by $O_d(1)$ equations, each of degree $O_d(1)$, with
$$
\dim B_{\nu}(G,s) \leq n - (s - \sigma_\nu(G))^+.
$$
  \end{lemma}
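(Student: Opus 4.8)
The plan is to pass to the incidence variety
$$
\mathcal Z:=\bigset{(\vx,\vh)\in\A_{\F_\nu}^{n}\times\A_{\F_\nu}^{n}:H_G(\vx)\vh=\0},\qquad H_G(\vx):=\bigbrac{\partial^{\ve_i+\ve_j}G(\vx)}_{1\le i,j\le n},
$$
where the Hessian matrix $H_G$ has entries which are forms of degree $d-2$ in $\vx$. Since $\partial^{\ve_j}(\vh\cdot\nabla G)=(H_G(\vx)\vh)_j$, the fibre over $\vh$ of the projection $\pi\colon\mathcal Z\to\A_{\F_\nu}^n$, $(\vx,\vh)\mapsto\vh$, is exactly $\Sing_\nu(\vh\cdot\nabla G)$, so $\dim\pi^{-1}(\vh)=\sigma_\nu(\vh\cdot\nabla G)$ and $B_\nu(G,s)=\{\vh:\dim\pi^{-1}(\vh)\ge s\}$. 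Projectivising the $\vx$-coordinate of $\mathcal Z$ exhibits $B_\nu(G,s)$ as the locus over which a projective morphism has fibres of dimension $\ge s-1$, a Zariski-closed condition, so $B_\nu(G,s)$ is an affine variety. That it can be cut out by $O_d(1)$ equations of degree $O_d(1)$ --- or at least be contained in such a variety of no larger dimension, which is all the application requires --- I would extract from effective elimination theory applied to the bilinear system $H_G(\vx)\vh=\0$ together with the condition $\dim\ge s$, as in \cite[Lemma~1]{41}; the point is that the Hessian entries and their minors have degree depending only on $d$.

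For the dimension bound we may assume $s>\sigma_\nu(G)$, since otherwise $(s-\sigma_\nu(G))^+=0$ and $\dim B_\nu(G,s)\le n$ is trivial; put $\sigma:=\sigma_\nu(G)$. The first of two ingredients is the elementary fact that a surjective morphism $f\colon X\to Y$ onto an irreducible variety $Y$, all of whose fibres have dimension $\ge t$, satisfies $\dim X\ge\dim Y+t$ (some irreducible component of $X$ dominates $Y$, and over a dense open subset of $Y$ the fibre of $f$ is the union of the fibres of the dominating components, so one of these has generic fibre of dimension $\ge t$). Let $W\subseteq\mathcal Z$ be the locus of points lying on a $\ge s$-dimensional component of their $\pi$-fibre; by upper semicontinuity of fibre dimension $W$ is closed, $\pi(W)=B_\nu(G,s)$, and every $\pi|_W$-fibre over a point of $B_\nu(G,s)$ has dimension $\ge s$. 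Applying the fact to $\pi|_W$ restricted over a top-dimensional component of $B_\nu(G,s)$ gives
$$
\dim\mathcal Z\ \ge\ \dim W\ \ge\ \dim B_\nu(G,s)+s.
$$

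The second ingredient is the bound $\dim\mathcal Z\le n+\sigma$. Project $\mathcal Z$ instead by $(\vx,\vh)\mapsto\vx$: the fibre over $\vx$ is $\ker H_G(\vx)$, of dimension $n-\rank H_G(\vx)$. Stratifying $\A_{\F_\nu}^n$ by the value $e$ of $\rank H_G(\vx)$, it is enough to show that $R_e:=\{\vx:\rank H_G(\vx)\le e\}$ satisfies $\dim R_e\le\sigma+e$, for then each stratum contributes at most $\dim R_e+(n-e)\le n+\sigma$ to $\dim\mathcal Z$. Consider the gradient map $\psi:=\nabla G\colon\A_{\F_\nu}^n\to\A_{\F_\nu}^n$; it is homogeneous of degree $d-1$, has Jacobian $H_G$, and satisfies $\psi^{-1}(\0)=\Sing_\nu(G)$. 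Let $C$ be an irreducible component of the cone $R_e$. At a general point of $C$ the differential of $\psi|_C$ is a restriction of $H_G$ and so has rank $\le e$, whence $\dim\overline{\psi(C)}\le e$; this uses that $\psi$ is generically smooth along $C$, which holds when $\operatorname{char}\F_\nu=0$ or $\operatorname{char}\F_\nu>d$, and is where the hypothesis on $\nu$ enters. On the other hand the fibre of $\psi|_C$ over $\0$ lies in $\Sing_\nu(G)$, so has dimension $\le\sigma$, while every fibre of $\psi|_C\colon C\to\overline{\psi(C)}$ has dimension $\ge\dim C-\dim\overline{\psi(C)}$; comparing these, $\dim C\le\sigma+e$. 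Combining the two ingredients,
$$
\dim B_\nu(G,s)\ \le\ \dim\mathcal Z-s\ \le\ n+\sigma-s\ =\ n-(s-\sigma)^+.
$$

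The step I expect to be the main obstacle is the inequality $\dim R_e\le\sigma+e$, which bounds the locus of rank drop of the Hessian in terms of the singular locus of $G$: this is the only place where the geometry of $G$ --- through its gradient map --- genuinely enters, and where the characteristic hypothesis on $\nu$ is used. By comparison, $\dim\mathcal Z\ge\dim B_\nu(G,s)+s$ is routine once one is careful about the reducibility of $\mathcal Z$ and of $B_\nu(G,s)$, and the structural claims for $B_\nu(G,s)$ follow from standard effective elimination theory, as carried out in \cite{41}.
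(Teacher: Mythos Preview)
The paper does not actually prove this lemma; it simply records it as Lemma~1 of \cite{41}. Your incidence-variety argument via the Hessian, with the two projections of $\mathcal Z=\{(\vx,\vh):H_G(\vx)\vh=\0\}$, is precisely the method used there, so you have reconstructed the intended proof. Your identification of $\dim R_e\le\sigma+e$ as the crux, and your derivation of it from the gradient map $\psi=\nabla G$ together with the fibre over $\0$, is the right structure.

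One point worth flagging. You correctly locate the characteristic hypothesis in the generic-smoothness step for $\psi|_C$, and you state that $\operatorname{char}\F_\nu=0$ or $>d$ suffices. This is in fact \emph{stronger} than the paper's stated hypothesis ``$\nu$ coprime to $d$'', and the paper's hypothesis is actually too weak for the conclusion: take $G=x_1^px_2\in\Z[x_1,x_2]$ with $d=p+1$ and $\nu=p$, so that $\gcd(\nu,d)=1$, yet over $\F_p$ the Hessian of $G$ vanishes identically, giving $\sigma_p(\vh\cdot\nabla G)=2$ for every $\vh$ while $\sigma_p(G)=1$, contradicting the bound. The applications in this paper (in the proofs of the Weyl bound and the van~der~Corput process) only ever invoke the lemma with $\nu=\infty$ or with $\nu>C\ge d$, so your reading of the hypothesis is the operative one. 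A minor caution: separability of $\psi|_C$ onto its image is not automatic from $\operatorname{char}>d$ alone, so the phrase ``generically smooth along $C$'' deserves a word of justification; but the inequality $\dim R_e\le\sigma+e$ does hold in the regime the paper needs, and your outline captures the substance of the argument in \cite{41}.
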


Next, for $\va \in \Z^n$, let us write $[\va]_p$ for the image of $\va$ under the natural projection $\Z^n \to \F_p^n$.  For consistency, we write $[\va]_\infty$ for $\va$.  The following is a simple consequence of Browning and Heath-Brown \cite[Lemma~4]{41}. 
 \begin{lemma}[Dimension growth bound]\label{counting lemma}
 Let $\mathcal{P}\subset \set{p : p \text{ prime}} \cup \set{\infty}$ be a finite subset.  To each $\nu \in \mathcal{P}$ we associate an affine  variety $X_\nu \subset \A_{\F_\nu}^n$ defined by at most $D$ equations with coefficients in $\F_\nu$, each of degree at most $D$.  Suppose that the dimension of $X_\nu$ is at most $k_\nu$.
 Then there exists $A(D, n) > 0$ such that for any $T \geq 1$ we have
\begin{align*}
  \hash \{ \va \in \Z^n \cap [-T, T]^n :  [\va]_\nu \in &X_\nu \text{ for all } \nu \in \mathcal{P}\} \\ &\leq A(D, n)^{|\mathcal{P}|}\ \sum_{\nu \in \mathcal{P}}  T^{k_\nu} \prod_{\substack{\mu \in \mathcal{P}\\ k_\mu < k_\nu}} \mu^{-(k_\nu - k_\mu)},
\end{align*}
 where we interpret $\mu^{-1}$ to be $0$ when $\mu = \infty$.
 \end{lemma}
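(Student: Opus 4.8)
The plan is to reduce the statement to the case $|\mathcal{P}| = 1$, where it becomes exactly the dimension-growth counting bound of Browning and Heath-Brown \cite[Lemma~4]{41}: for a single affine variety $Y \subset \A_{\F_\nu}^n$ of dimension $k$ defined by $\leq D$ equations of degree $\leq D$, one has $\hash\{\va \in \Z^n \cap [-T,T]^n : [\va]_\nu \in Y\} \leq A(D,n) T^{k}$ when $\nu = \infty$, while for finite $\nu = p$ one gets the sharper bound $\ll_{D,n} T^{k} + (T/p)^{?}\cdots$ — more precisely the estimate should be read so that the count over a box of side $T$ of points reducing into a $k$-dimensional variety mod $p$ is $\ll_{D,n} \max\{1, (T/p)^{k}\} \cdot p^{?}$; the cited lemma packages this as $T^{k} \prod_{p} (\text{gain when } T < p)$. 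I would quote \cite[Lemma~4]{41} in whatever exact form it is stated and not reprove it.

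The key step is the following sieve-type inequality. Write $S(\mathcal{P})$ for the set being counted. Order $\mathcal{P} = \{\nu_1, \dots, \nu_r\}$. For each point $\va \in S(\mathcal{P})$, it lies in $[X_{\nu_i}]$-preimage for \emph{every} $i$, so in particular the count is bounded by $\min_i \hash\{\va : [\va]_{\nu_i} \in X_{\nu_i}\}$; but a plain minimum is too weak because it doesn't see the product over the other primes. Instead I would argue by a dyadic/pigeonhole decomposition, or better, induct on $r = |\mathcal{P}|$. For the inductive step, fix the "worst" index — say the $\nu = \nu_j$ achieving the maximum of $k_\nu$, since the term $T^{k_\nu}$ with a $p$-saving product is largest there — and split $S(\mathcal{P})$ by first imposing the single condition $[\va]_{\nu_j} \in X_{\nu_j}$, then sieving the remaining conditions. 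Concretely: cover $[-T,T]^n$ by translates of boxes of side roughly $\nu_j$ (when $\nu_j$ is finite) on which $[\va]_{\nu_j}$ is constant; on each such box the condition $[\va]_{\nu_j} \in X_{\nu_j}$ either fails entirely or the residue is forced, and one then applies the inductive hypothesis over $\mathcal{P} \setminus \{\nu_j\}$ inside each of those $\ll (T/\nu_j + 1)^n$ boxes — but crucially only for those boxes whose residue mod $\nu_j$ lies in $X_{\nu_j}$, of which there are $\ll \nu_j^{k_{\nu_j}}$ by Lang–Weil / the cited point-count over finite fields (again $O_D(1)$ equations of degree $O_D(1)$). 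Multiplying the per-box bound $A(D,n)^{r-1}\sum_{\nu \neq \nu_j}(\dots)$ by the number of admissible boxes $\ll \nu_j^{k_{\nu_j}}$ and the number of lattice points per box, and bookkeeping the powers of $T$ and $\nu_j$, should reproduce the claimed product structure with the factor $A(D,n)$ picked up at each stage, yielding $A(D,n)^{|\mathcal{P}|}$ overall. When $\nu_j = \infty$ one instead uses directly that $\hash(\Z^n \cap [-T,T]^n \cap X_\infty) \ll_{D,n} T^{k_\infty}$ and the empty product (consistent with the $\mu^{-1} = 0$ convention for $\mu = \infty$).

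The main obstacle I anticipate is the bookkeeping in the inductive step: one must check that choosing $\nu_j$ to maximise $k_\nu$ is the right pivot (so that the "lost" savings $\prod_{k_\mu < k_{\nu_j}} \nu_j^{-(k_{\nu_j}-k_\mu)}$ are exactly compensated by the $\ll \nu_j^{k_{\nu_j}}$ admissible residue classes and the $(T/\nu_j)^{k_\mu}$-type terms from the inductive hypothesis), and that cross-terms in which two different indices $\nu, \mu \neq \nu_j$ both contribute do not spoil the additive (rather than multiplicative) form $\sum_\nu T^{k_\nu}\prod(\dots)$. A clean way to dodge part of this is to not induct but instead prove the bound by the single observation that $S(\mathcal{P}) \subseteq S(\{\nu\})$ for each $\nu$, split the box $[-T,T]^n$ into $\prod_{\text{finite }\nu\in\mathcal{P}} \nu^{?}$ congruence classes, and count admissible classes by multiplicativity (CRT) as $\prod_\nu \nu^{k_\nu}$ — the subtlety being that this naive product gives $T^n$-type losses unless $T$ is large compared to $\prod \nu$, which is precisely why the stated bound has the $\sum_\nu$ with compensating products rather than a clean product. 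I would therefore present the induction, and flag that the only genuinely nontrivial input is the finite-field point count $\hash X_\nu(\F_\nu) \ll_{D,n} \nu^{k_\nu}$ together with the single-variety case \cite[Lemma~4]{41}, everything else being elementary counting.
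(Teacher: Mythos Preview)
You have misread what \cite[Lemma~4]{41} actually provides. It is not the single-prime statement you describe in your first paragraph; it is already the full multi-prime bound. With the finite primes $p_1,\dots,p_r$ (those with $k_{p_i}<k_\infty$) ordered so that $\kappa_1\geq\dots\geq\kappa_r$ and with $l=k_\infty$, it yields
\[
N(T)\leq A(D,n)^{r+1}\Biggl(T^l\prod_{i=1}^r p_i^{\kappa_i-l}+\sum_{i=1}^r T^{\kappa_i}\prod_{j\geq i}p_j^{\kappa_j-\kappa_i}\Biggr).
\]
The paper's entire proof is then the bookkeeping verification that each summand on the right coincides with one of the terms $T^{k_\nu}\prod_{k_\mu<k_\nu}\mu^{-(k_\nu-k_\mu)}$ in the stated bound. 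There is no induction in the paper's argument; the induction lives inside the cited lemma.

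Your proposed induction also contains a concrete error. Pivoting on the $\nu_j$ with \emph{maximal} $k_{\nu_j}$ fails: splitting $[-T,T]^n$ into residue classes modulo $\nu_j$, keeping the $\ll\nu_j^{k_{\nu_j}}$ admissible classes, and applying the inductive hypothesis with $T$ replaced by $T/\nu_j$ produces terms
\[
T^{k_\nu}\,\nu_j^{\,k_{\nu_j}-k_\nu}\prod_{\substack{\mu\neq\nu_j\\ k_\mu<k_\nu}}\mu^{-(k_\nu-k_\mu)}
\]
with $k_{\nu_j}-k_\nu\geq 0$, i.e.\ a \emph{positive} power of $\nu_j$ where the target has none (since $k_{\nu_j}\geq k_\nu$ means $\nu_j$ is absent from the product $\prod_{k_\mu<k_\nu}$). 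The correct pivot is the finite prime with \emph{minimal} $k_\nu$: then $\nu_j^{k_{\nu_j}-k_\nu}=\nu_j^{-(k_\nu-k_{\nu_j})}$ is exactly the missing factor in the target product. With that fix the recursion does go through, but one must additionally treat the regime $T<\nu_j$ separately, which is where the extra summand $T^{k_{\nu_j}}$ (with empty product) enters. Carried out in full this reproduces the proof of \cite[Lemma~4]{41} rather than quoting it.
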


 \begin{proof}
We describe how to deduce the above from \cite[Lemma~4]{41}.  
Let $N(T)$ denote the cardinality that is to be estimated. 
Define the set 
\begin{equation}\label{primes less than l}
 \set{p_1, \dots, p_r} := \set{\nu \in \mathcal{P} : k_{\nu} < k_{\infty}}.  
 \end{equation}
 Notice that $p_1, \dots, p_r$ are all necessarily finite primes.
 Writing $\kappa_i$ for $k_{p_i}$, let us order the $p_i$ so that
 \begin{equation}\label{ordering of the kappa}
 \kappa_{1} \geq  \kappa_{2} \geq \dots \geq  \kappa_{r}.
 \end{equation}
 We can then apply \cite[Lemma~4]{41}, with $l = k_\infty$, to conclude that there exists $A(D, n) > 0$ such that 
 $$
N(T)\leq  A(D, n)^{r+1} \Bigbrac{ T^l \prod_{i = 1}^r p_i^{ \kappa_{i} - l} + \sum_{i=1}^r T^{\kappa_i}\prod_{j=i}^r p_j^{\kappa_j - \kappa_i}}.
 $$
 By \eqref{primes less than l} we have $r+1 \leq |\mathcal{P}|$.  It therefore remains to show that 
 \begin{equation}\label{summation to check}
T^l \prod_{i = 1}^r p_i^{ \kappa_{i} - l} + \sum_{i=1}^r T^{\kappa_i}\prod_{j=i}^r p_j^{\kappa_j - \kappa_i} \leq \sum_{\nu \in \mathcal{P}}  T^{k_\nu} \prod_{\substack{\mu \in \mathcal{P}\\ k_\mu < k_\nu}} \mu^{-(k_\nu - k_\mu)}.
\end{equation}
We have defined $l$ to be $k_\infty$.  Furthermore, by \eqref{primes less than l} we have $k_\mu < k_\infty$ if and only if $\mu = p_i$ for some $i$.  Therefore
$$
T^l \prod_{i = 1}^r p_i^{ \kappa_{i} - l} = T^{k_\infty} \prod_{\substack{\mu \in \mathcal{P}\\ k_\mu < k_\infty}} \mu^{-(k_\infty - k_\mu)}.
$$
Next, fix $i \in \set{1, \dots, r}$.  Then by \eqref{ordering of the kappa} we have $k_\mu < k_{p_i}$ if and only if $\mu = p_j$ for some $j > i$.  Thus
  \begin{align*}
  T^{\kappa_i}\prod_{j=i}^r p_j^{\kappa_j - \kappa_i}& =  T^{\kappa_i}\prod_{j > i} p_j^{\kappa_j - \kappa_i}\\
  & =  T^{k_{p_i}} \prod_{\substack{\mu \in \mathcal{P}\\ k_\mu < k_{p_i}}} \mu^{-(k_{p_i} - k_\mu)}.
  \end{align*}
  We have shown that each term of summation in the left-hand side of \eqref{summation to check} has an identical term of summation in the right hand side, which therefore completes the derivation of our lemma from  \cite[Lemma~4]{41}.
 \end{proof}

\section{Exponential sum estimates}\label{s:exp}

This  section is the heart  of our paper and is concerned with estimating a very general family  of  multi-dimensional exponential sums with polynomial arguments. 
We begin by introducing the following class of weight functions.

\begin{definition}[Smooth weights $\mathcal{S}^{+}(\vc)$]\label{smooth}
Let $\vc = (c, c_0, c_1, \dots)$ be an increasing infinite tuple of positive absolute constants which are \emph{super-exponential} in the sense that for any non-negative integers $i,j$ we have $c_{i+j} \geq c_ic_j$.  We define $\mathcal{S}^{+}(\vc)$ to be the set of smooth {\em weight functions} $\omega : \R^n \to [0, \infty)$ satisfying
\begin{enumerate}[(i)]
\item $\supp(\omega) \subset [-c, c]^n$;
\item for any $\vepsilon \in (\N\cup\{0\})^n$  we have $\norm{\partial^{\vepsilon} \omega}_{L^\infty(\R^n)} \leq c_{\epsilon_1 + \dots + \epsilon_n}$. 
\end{enumerate}
\end{definition}

Of central concern to us is the 
 \emph{exponential sum}
$$
S(\alpha, P):= \sum_{\vx \in \Z^n} \omega(\vx/P) e(\alpha f(\vx)),
$$
where $\omega \in  \mathcal{S}^+(\vc)$ 
and $f\in\Z[x_1, \dots, x_n]$ is the \emph{underlying polynomial}.   
Throughout, we write $f^{[k]}$ for the homogeneous part of $f$ of degree $k$.  The height of $f$, 
written $\mathrm{Height}\brac{f}$, is the 
maximum absolute value of the coefficients of $f$.
We henceforth  assume that the underlying polynomial $f$ has degree at most $d$, with leading form $f^{[d]}$ having  singular locus \eqref{singular locus} over $\Q$ of dimension $\sigma := \sigma_\infty(f^{[d]})$.  In the statement of all results in this section we assume that $\alpha \in \T$ and $q \in \N$ are primitive.  

\begin{remark}[Implicit constants]
Throughout this section, all implicit constants may depend on $\eps, d, n$ and $c_i$, where $c_i$ is a term of the super-exponential sequence appearing in $\mathcal{S}^+(\vc)$.  We determine $\vc$ for our particular choice of $\omega$ in \S \ref{major arc section}, from which it follows that  $c_i = O_{i, F}(1)$, where $F$ is the form occurring in Theorem \ref{general d improvement theorem} or \ref{small d improvement theorem}.  However, since the $c_i$ increase with $i$, we emphasise that the dependence of any implicit constant on $c_i$ is subject to the caveat that $i = O_{d, n}(1)$, which can be guaranteed in all our results.
\end{remark}

Our first estimate is the classical Weyl bound for $S(\alpha,P)$.  However, unlike the standard treatment found in, say, \cite{birch} or \cite{dav}, we don't  assume that $\mathrm{Height}\brac{f^{[d]}} = O(1)$.

\begin{lemma}[Weyl bound]\label{weyl bound}
Suppose that $\mathrm{Height}\brac{f^{[d]}} \leq H$.  Then 
\begin{align*}
\abs{\frac{S(\alpha, P)}{P^n}}^{2^{d-1}} \ll~& 
(\log P)^{n}\\
&\times \Bigbrac{ P^{1-d} + \norm{q\alpha}H + qP^{-d} + \min\bigset{Hq^{-1}, \trecip{\norm{q\alpha} P^d} }}^{\frac{n-\sigma}{d-1}}.
\end{align*}
\end{lemma}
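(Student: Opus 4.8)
The plan is to follow the classical Weyl differencing argument, carefully tracking the dependence on $H = \mathrm{Height}(f^{[d]})$ rather than treating it as $O(1)$. First I would apply van der Corput / Weyl differencing $d-1$ times to the sum $S(\alpha,P)$. After squaring and differencing once, one replaces $f(\vx)$ by $f(\vx + \vh_1) - f(\vx)$, a polynomial of degree $d-1$; iterating, after $d-1$ steps one arrives at a bound of the shape
\begin{equation*}
\abs{S(\alpha,P)}^{2^{d-1}} \ll P^{(2^{d-1}-d)n} \sum_{\abs{\vh_1},\dots,\abs{\vh_{d-1}} \ll P} \Bigabs{\sum_{\vx} \omega_{\vh}(\vx/P) e\bigbrac{\alpha \, \gamma(\vh_1,\dots,\vh_{d-1};\vx)}},
\end{equation*}
where $\gamma$ is the multilinear form obtained by repeated differencing of the top-degree part, namely $\gamma(\vh_1,\dots,\vh_{d-1};\vx)$ is linear in $\vx$ with coefficients $\sum_j (\text{multilinear form in }\vh)_j x_j$, and $\omega_{\vh}$ is a product of shifts of $\omega$ which still lies in a class $\mathcal{S}^+(\vc')$ with comparable constants. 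The inner sum over $\vx$ is a product of $n$ one-dimensional geometric-type sums, each bounded by $\min\{P, \norm{\beta_j}^{-1}\}$ where $\beta_j$ is the relevant coefficient; after partial summation (Lemma~\ref{partial summation lemma}) to handle the smooth weight, one gets $\ll P^n \prod_j \min\{1, (P\norm{\beta_j})^{-1}\}$.

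Next I would bound the resulting average over $\vh$. Crucially, the coefficients of the linear form $\gamma$ in $\vx$ are, up to $O_d(1)$ factors, the entries of $\Psi(\vh_1,\dots,\vh_{d-1})$ where $\Psi$ is the symmetric multilinear form associated to $f^{[d]}$; since $\mathrm{Height}(f^{[d]}) \leq H$, each such entry has size $\ll H$ and (this is the key point) taking $\vh_2 = \dots = \vh_{d-1}$ equal to a fixed vector and letting $\vh_1$ vary, the condition $\norm{\beta_j} \leq \lambda$ for all $j$ becomes a statement about the number of $\vh_1 \in \ZZ^n$ with $\abs{\vh_1} \ll P$ lying near the kernel of an integer matrix. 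Actually the cleanest route — and the one consistent with how $n - \sigma$ enters — is: specialise all but one differencing variable, so that $S$ is controlled by (a power of) the count of $\vh \in \ZZ^n$, $\abs{\vh} \leq P$, with $\norm{q\alpha (M\vh)_j} \leq$ (something) for all $j$, where $M$ is an integer matrix of height $\ll H$ whose rank drop is governed by $\sigma$. Then I would invoke the Shrinking lemma (Lemma~\ref{shrinking lemma}) to pass to a smaller box, and bound the number of lattice points near the singular locus using the dimension bound: since the variety where the rank of $M$ drops by $\geq s$ has dimension $\leq n - (s-\sigma)^+$, one gets an exponent $(n-\sigma)/(d-1)$ after accounting for the $2^{d-1}$ power and the $d-1$ differencing steps.

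The extra terms $\norm{q\alpha}H$, $qP^{-d}$, and $\min\{Hq^{-1}, (\norm{q\alpha}P^d)^{-1}\}$ come from the standard Diophantine bookkeeping: writing $\alpha = a/q + \beta$ with $\abs{\beta} = \norm{q\alpha}/q$, the number of $\vh$ in a box of side $R$ with $\norm{\alpha t(\vh)} $ small, for $t(\vh)$ an integer of size $\ll H R^{d-1}$, is estimated by splitting into the contribution of the rational approximation $a/q$ (giving terms in $q$, $H$, $R$) and the error $\beta$ (giving the $\norm{q\alpha}$ terms); the $\min$ arises because one may choose whichever of $q$ or $\norm{q\alpha}^{-1}$ is the binding constraint. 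The $(\log P)^n$ factor is the usual loss from summing $\sum_{t \leq T} \min\{1,(T'\norm{\alpha t}\}^{-1}$ type sums over the $n$ coordinate directions. I would assemble these pieces, optimise $R$ (take $R \asymp P$), and simplify using $\lambda \leq 1/H$-type ranges to reach the stated bound.

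The main obstacle I expect is the careful tracking of $H$ through the differencing: in the usual treatment with $\mathrm{Height}(f^{[d]}) = O(1)$ the multilinear form $\Psi$ has bounded coefficients and one simply counts $\vh$ near a fixed bounded-height subvariety, but here $H$ can grow with $P$, so one must (a) correctly identify where $H$ enters the size of the linear coefficients $\beta_j$, (b) ensure the application of the Shrinking lemma and the dimension-growth bound (Lemma~\ref{counting lemma}) is uniform in $H$, and (c) verify that the term $\norm{q\alpha}H$ rather than $\norm{q\alpha}$ is exactly what emerges — this requires being precise that $t(\vh) \ll H P^{d-1}$ and that the $\beta$-contribution to $\norm{\alpha t(\vh)}$ is $\ll \norm{q\alpha} H$ on the relevant box. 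Once the $H$-dependence is pinned down, the rest is a routine, if lengthy, adaptation of Davenport's and Birch's arguments.
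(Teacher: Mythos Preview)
Your outline is broadly on the right track --- Weyl/van der Corput differencing $d-1$ times, partial summation for the weight, the shrinking lemma, and a dimension bound for the singular locus --- and this is indeed the skeleton of the paper's proof. But two points in your sketch do not match the actual mechanism and would cause trouble if carried out as written.

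First, the way the $q$, $\norm{q\alpha}$ and $H$ terms enter is \emph{not} via writing $\alpha = a/q + \beta$ and splitting into rational and error parts. Instead, after applying the shrinking lemma (Lemma~\ref{shrinking lemma}) to \emph{each} of the $d-1$ blocks $\vh_i$ (using that $\vh_i \mapsto M(\underline\vh)$ has symmetric matrix), the paper chooses the product $\theta_1\dotsm\theta_{d-1}$ so that Heath-Brown's Lemma~\ref{heathbrown lemma}, applied with the primitive pair $(\alpha,q)$, forces $M(\underline\vh)_j = 0$ for every $j$. The four terms in the bracket of the lemma are exactly the reciprocals of the four quantities in the $\min$ defining $\theta_1\dotsm\theta_{d-1}$. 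The $H$-dependence appears because $|M(\underline\vh)_j| \ll H(\theta_1H_1)\dotsm(\theta_{d-1}H_{d-1})$, so the constraints $|M(\underline\vh)_j| < \tfrac12\norm{q\alpha}^{-1}$ and $|M(\underline\vh)_j| < q$ in Lemma~\ref{heathbrown lemma} both involve $H$.

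Second, and more seriously, when you ``specialise all but one differencing variable'' the resulting matrix $M$ does \emph{not} have rank drop governed by $\sigma = \sigma_\infty(f^{[d]})$; its corank equals $s_{d-2} := \sigma_\infty(f^{[2]}_{\vh_1,\dots,\vh_{d-2}})$, which can be much larger than $\sigma$ for bad choices of $\vh_1,\dots,\vh_{d-2}$. The paper bridges this gap by an iterative pigeonhole: at stage $i$ one stratifies the $\vh_i$-range according to the value of $s_i := \sigma_\infty(f^{[d-i]}_{\vh_1,\dots,\vh_i})$, and Lemma~\ref{dimension bound lemma} bounds the dimension of the stratum $\{s_i \geq s\}$ by $n - (s - s_{i-1})^+$. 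Only after telescoping the resulting product $H_1^{-(s_1-\sigma)^+}\dotsm H_{d-1}^{-(n-s_{d-2})}$ with the choice $\theta_1H_1=\dots=\theta_{d-1}H_{d-1}$ does the clean exponent $(n-\sigma)/(d-1)$ emerge. Without this stratification your argument would only control generic specialisations, not the full sum.
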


Note that $\sigma=n$ when $\deg(f)<d$, so that this estimate is trivial in that case.
Rather than giving a suitably modified sketch of the standard proof, we opt for a detailed  account based on van der Corput differencing.  This affords us the opportunity to introduce, in a less technical context, some of the key ideas behind our later arguments.

\begin{proof}[Proof of Lemma \ref{weyl bound}]
Let $1 \leq H_1, \dots, H_{d-1} \leq P$ be parameters to be determined later.  Set $\chi(\vx) := \omega(\vx/P) e(\alpha f(\vx))$.  Changing variables and averaging gives the identity
$$
S(\alpha, P) = \recip{\floor{H_1}^n}\ \sum_{1 \leq \vh \leq H_1}\ \sum_{\vx \in \Z^n} \chi(\vx + \vh).
$$
The number of $\vx \in \Z^n$ for which there exists $1 \leq \vh \leq H_1$ with $\chi(\vx + \vh) \neq 0$ is $O\bigbrac{ P^n}$. Interchanging the order of summation and applying Cauchy--Schwarz, it follows that 
$$
 |S(\alpha, P)|^2 \ll \frac{P^n}{H_1^{2n}}\sum_{1 \leq \vh, \vh' \leq H_1} \sum_{\vx \in \Z^n} \chi(\vx+\vh)\overline{\chi(\vx+\vh')} ,
$$
Let
\begin{equation}\label{difference operators}
\omega_{\vh}(\vx) := \omega(\vx+\vh)\omega(\vx) \quad \text{ and }\quad f_{\vh}(\vx) := f(\vx +\vh) - f(\vx).
\end{equation}
Applying the triangle inequality, it follows that 
$$
|S(\alpha, P)|^2 \ll \frac{P^n}{H_1^{2n}}\sum_{\vh_1} r_{H_1}(\vh_1)\Bigabs{\sum_{\vx \in \Z^n} \omega_{\vh_1/P}(\vx/P) e(\alpha(f_{\vh_1}(\vx)))},
$$
where
$$
r_{H_1}(\vh_1) := \hash\set{(\vh,\vh') : 1 \leq \vh, \vh' \leq H_1 \text{ and } \vh - \vh' = \vh_1}. 
$$
Notice that $r_{H_1}(\vh_1) \leq H_1^n$ and $r_{H_1}(\vh_1) = 0$ if $|\vh_1| \geq H_1$.  Thus
\begin{equation}\label{base cauchy}
\abs{\frac{S(\alpha, P)}{P^n}}^2 \ll \frac{1}{H_1^n} \sum_{-H_1 < \vh_1 < H_1}\abs{\frac{S_{\vh_1}(\alpha, P)}{P^n}} ,
\end{equation}
where
\begin{equation}\label{base Sh_1}
S_{\vh_1}(\alpha, P) := \sum_{\vx \in \Z^n} \omega_{\vh_1/P}(\vx/P) e(\alpha(f_{\vh_1}(\vx))).
\end{equation}

We call the derivation of \eqref{base cauchy} the method of {\em van der Corput differencing}.
We claim that $\omega_{\vh_1/P} \in \mathcal{S}^+(\vc')$ with $c' = c$ and $c_m' = c_m 2^m$.  Since $\omega_{\vh_1/P}$ is a product of two non-negative smooth functions, it is itself non-negative and smooth.  Since one of the factors which comprise $\omega_{\vh_1/P}$ is $\omega$, we have
$$
\supp(\omega_{\vh_1/P}) \subset \supp(\omega) \subset [-c, c]^n.
$$
Finally, by the product rule for differentiation and the super-exponential nature of the $c_m$, for any $\vepsilon \in (\N\cup\{0\})^n$ with $\epsilon_1 + \dots + \epsilon_n = m$ we have
\begin{align*}
|\partial^{\vepsilon} \omega_{\vh_1/P}(\vx)| & \leq \sum_{\vlambda + \vmu = \vepsilon} \binom{\epsilon_1}{\lambda_1} \dotsm \binom{\epsilon_n}{\lambda_n} |\partial^{\vlambda}\omega(\vx+P^{-1}\vh_1) \partial^{\vmu}\omega(\vx)|\\
& \leq c_m \sum_{\vlambda + \vmu = \vepsilon} \binom{\epsilon_1}{\lambda_1} \dotsm \binom{\epsilon_n}{\lambda_n}\\
& = c_m 2^m.
\end{align*}
The claim follows.  

Let us define $\omega_{\vh_1, \dots, \vh_r}$ via \eqref{difference operators} with $\omega = \omega_{\vh_1, \dots, \vh_{r-1}}$ and $\vh= \vh_{r}$.  Then by induction and our previous claim, we have $\omega_{(\vh_1, \dots, \vh_r)/P} \in \mathcal{S}^+(\vc')$ with $\vc'$  the super-exponential sequence given by $c' = c$ and 
$
c_m' = c_m 2^{rm}.
$ 

Define $f_{\vh_1, \dots, \vh_{r}}$ analogously, so that if $g = f_{\vh_1, \dots, \vh_{r-1}}$ is defined, then we set
\begin{align*}
f_{\vh_1, \dots, \vh_r}(\vx) &:= g_{\vh_r}(\vx) = f_{\vh_1, \dots, \vh_{r-1}}(\vx+\vh_r) - f_{\vh_1, \dots, \vh_{r-1}}(\vx).
\end{align*}
Notice that
\begin{align*}
f_{\vh_1, \vh_2}(\vx) & = f(\vx+ \vh_1 + \vh_2) - f(\vx + \vh_1) - f(\vx + \vh_2) + f(\vx)\\
& = f_{\vh_2, \vh_1}(\vx).
\end{align*}
It follows from this, and the inductive definition,
that $f_{\vh_1, \dots, \vh_r}$ is invariant under permutations of the $\vh_i$.  Furthermore, by Taylor's formula we have 
\begin{equation}\label{taylor applied to f}
f^{[d-r]}_{\vh_1, \dots, \vh_r} = \vh_r \cdot \nabla f^{[d-(r-1)]}_{\vh_1, \dots, \vh_{r-1}}.
\end{equation}
Consequently, $f^{[d-r]}_{\vh_1, \dots, \vh_r}$ is linear in each $\vh_i$.

Iterating the argument that led to \eqref{base cauchy} and \eqref{base Sh_1}, we deduce that for each $1 \leq r \leq d-1$ we have
\begin{equation}\label{eq:vanilla}
\abs{\frac{S(\alpha, P)}{P^n}}^{2^r} \ll \frac{1}{(H_1\dotsm H_r)^n}\sum_{-H_1 < \vh_1< H_1} 
\hspace{-0.1cm}
\dots
\hspace{-0.1cm}
\sum_{ -H_r < \vh_r<H_r}\abs{\frac{S_{\vh_1, \dots, \vh_{r}}(\alpha, P)}{P^n}} ,
\end{equation}
where 
$$
S_{\vh_1, \dots, \vh_r}(\alpha, P) := \sum_{\vx \in \Z^n} \omega_{(\vh_1, \dots, \vh_r)/P}(\vx/P) e(\alpha(f_{\vh_1, \dots, \vh_r}(\vx))).
$$

Since $f_{\vh_1, \dots, \vh_{d-1}}^{[1]}(\vx)$ is linear in $\vx$, we have
$$
f_{\vh_1, \dots, \vh_{d-1}}(\vx) = \sum_i \brac{\partial^{\ve_i} f_{\vh_1, \dots, \vh_{d-1}}^{[1]}} x_i + c_{\vh_1, \dots, \vh_{d-1}},
$$
where $\partial^{\ve_i} f_{\vh_1, \dots, \vh_{d-1}}^{[1]}$ and $c_{\vh_1, \dots, \vh_{d-1}}$ are integers independent of $\vx$.  Define the function   $\phi : [-cP, cP]^n \cap \Z^n \to \C$ via  $\phi(\vx) = e(\alpha f_{\vh_1, \dots, \vh_{d-1}}(\vx))$. Then, in the notation of Lemma \ref{partial summation lemma}, we have 
\begin{align*}
|T_\phi(\vt)| & = \abs{\prod_{i=1}^n \sum_{-cP \leq x_i \leq t_i} e(\alpha \partial^{\ve_i} f_{\vh_1, \dots, \vh_{d-1}}^{[1]}x_i)}
\\ & \ll \prod_{i=1}^n \min\set{P, \bignorm{\alpha \partial^{\ve_i} f_{\vh_1, \dots, \vh_{d-1}}^{[1]}}^{-1}}.
\end{align*}
Let $g(\vt) = \omega_{(\vh_1, \dots, \vh_{d-1})/P}(\vt/P)$.  
For  $\epsilon_1 + \dots + \epsilon_n = m$, it follows from the chain rule that
$$
|\partial^{\vepsilon} g(\vt) | \leq  P^{-m} \norm{\partial^{\vepsilon}\omega_{(\vh_1, \dots, \vh_{d-1})/P}}_{L^\infty(\R^n)} \leq c_m2^{(d-1) m} P^{-m}.
$$
Hence partial summation (Lemma \ref{partial summation lemma}) yields the existence of  $\vepsilon \in \set{0,1}^n$ such that 
\begin{align*}
S_{\vh_1,\dots, \vh_{d-1}}(\alpha, P) & \ll   \frac{P^{\epsilon_1 + \dots + \epsilon_n}}{P^n}  \int_{[-cP, cP]^n} \bigabs{\partial^{\vepsilon}g \bigbrac{N \overline{\vepsilon} + \vt_{\vepsilon}}\ T_\phi\bigbrac{N \overline{\vepsilon} + \vt_{\vepsilon}} }\intd \vt\\
& \ll  \prod_{j=1}^n\min\set{P, \bignorm{\alpha \partial^{\ve_j} f_{\vh_1, \dots, \vh_{d-1}}^{[1]}}^{-1}} .
\end{align*}

Write $\underline{\vh}$ as a shorthand for the vector $(\vh_1, \dots, \vh_{d-1})$ and write 
$$M(\underline{\vh}):=
\bigbrac{ \partial^{\ve_j} f_{\vh_1, \dots, \vh_{d-1}}^{[1]}}_{1\leq j \leq n}.
$$
Let us view the vector $\alpha M(\underline{\vh})$ as an element of the torus $\T^n = \R^n/ \Z^n$.  Sub-dividing this torus into sub-cubes of side-length $P^{-1}$, each vector $ \alpha M(\underline{\vh})$ has $j$th coordinate lying in an interval $[\frac{r_j}{P}, \frac{r_j+1}{P})$, for some $\vr \in \Z^n$ with $0 \leq r_j < P$.  Let $R(\vr)$ deonte this region.  If $\alpha M(\underline{\vh}) \in R( \vr)$ then $\norm{\alpha M(\underline{\vh})_j} \geq r_j/P$ for each $j$.  Letting
$$
T(\vr):= \set{\underline{\vh}
 : \mbox{$\alpha M(\underline{\vh}) \in R(\vr)$  and  $|\vh_i| < H_i$ for $1\leq i \leq d-1$}},
$$ 
 we have
\begin{align*}
\sum_{|\vh_1| < H_1}\dots \sum_{|\vh_{d-1}| < H_{d-1}} &\prod_{j=1}^n\min\set{1, (P\norm{\alpha  M(\underline{\vh})_j})^{-1}}\\ & \leq \sum_{0 \leq \vr < P} \hash T( \vr) \prod_{j=1}^n \min\set{1, \tfrac{1}{r_j}} \\
& \ll  ( \log P)^n \max_{ \vr} \hash T( \vr).
\end{align*}

Define
$$
N_{\vh_1, \dots, \vh_{d-2}}( \vr) :=  \hash\set{ \vh_{d-1} \in \Z^n : \underline{\vh} \in T(\vr) }
$$
and, in a similar fashion,  let
$n_{\vh_1, \dots, \vh_{d-2}}$  denote  the number of integer vectors $\vh_{d-1}$ 
such that $|\vh_{d-1}|<H_{d-1}$ and 
 $\| \alpha M(\underline{\vh})_j\| \leq P^{-1}$
for all $j$. Now if $\vh_{d-1}$ and $\vh_{d-1}'$ are counted by 
$N_{\vh_1, \dots, \vh_{d-2}}( \vr)$ then $\vh_{d-1}'-\vh_{d-1}$ is counted by 
$n_{\vh_1, \dots, \vh_{d-2}}$, whence 
$N_{\vh_1, \dots, \vh_{d-2}}(\vr)\leq 
n_{\vh_1, \dots, \vh_{d-2}}$ for any $\vr\in \ZZ^n$.
It therefore follows that
$\hash
T(\vr) \ll \mathcal{M}_{H_1, \dots, H_{d-1}}(P^{-1}),
$
where 
$$
\mathcal{M}_{H_1, \dots, H_{d-1}}(P^{-1}):=
 \hash \set{\underline{\vh} \in \Z^{(d-1)n} : |\vh_i| < H_i  \text{ and } \norm{\alpha M(\underline{\vh})_j} < P^{-1} }.
$$
Combining these deliberations, we deduce that 
\begin{equation}\label{pre-shrinking weyl estimate}
\abs{\frac{S(\alpha, P)}{P^n}}^{2^{d-1}} \ll \brac{\frac{\log P}{H_1\dotsm H_{d-1}}}^n \mathcal{M}_{H_1, \dots, H_{d-1}}(P^{-1}).
\end{equation}

Next, we claim that the linear map $\vh_i \mapsto  M(\underline{\vh})$ has a symmetric matrix (with respect to the standard basis).  Since $M(\underline\vh)=M(\vh_1, \dots, \vh_{d-1})$ is invariant under permutations of the $\vh_i$, it suffices to establish the claim when $i = d-1$.  By \eqref{taylor applied to f} and linearity of differentiation, we have
\begin{equation}\label{e_i e_j}
\begin{split}
M(\vh_1, \dots, \vh_{d-1}) & = \Bigbrac{ \partial^{\ve_i}\bigbrac{ \vh_{d-1} \cdot \nabla  f^{[2]}_{\vh_1, \dots, \vh_{d-2}}}}_{1\leq i\leq n} \\
& =  \vh_{d-1}\cdot\Bigbrac{ \partial^{\ve_i + \ve_j}  f^{[2]}_{\vh_1, \dots, \vh_{d-2}}}_{\substack{1\leq i\leq n \\ 1 \leq j \leq n}} .
\end{split}
\end{equation}
Since $\ve_i + \ve_j=  \ve_j + \ve_i$, the claim follows

We can therefore apply the shrinking lemma (Lemma \ref{shrinking lemma}) to each block of variables $\vh_i$ in $\underline{\vh}$ and conclude that for any $\theta_1, \dots, \theta_{d-1} \in (0, 1]$ we have
\begin{align*}
\mathcal{M}_{H_1, \dots, H_{d-1}}(P^{-1}) \ll \frac{\mathcal{M}_{\theta_1H_1, \dots, \theta_{d-1}H_{d-1}}(\theta_1 \dotsm\theta_{d-1} P^{-1})}{(H_1^{-1} + \theta_1)^{n}\dots (H_{d-1}^{-1} + \theta_{d-1})^{n}}.
\end{align*}
By iterating \eqref{taylor applied to f}, one can check that we have the formula
\begin{align*}
 M(\vh_1, \dots, &\vh_{d-1})_j =\\ &\sum_{1 \leq i_1 \leq n} \dots \sum_{1 \leq i_{d-1} \leq n} h_{1, i_1} \dotsm h_{d-1, i_{d-1}} \partial^{\ve_{i_1} + \dots + \ve_{i_{d-1}}+ \ve_j} f^{[d]}.
\end{align*}
Hence there exists a constant $C = O_{n, d}(1)$ such that if $|\vh_i| < \theta_i H_i$ for all $i$ then
$$
| M(\vh_1, \dots, \vh_{d-1})_j| < CH (\theta_1 H_1)\dotsm (\theta_{d-1} H_{d-1}).
$$
Let us choose $\theta_1, \dots, \theta_{d-1} \in (0, 1]$ so that 
\begin{equation}\label{theta product def}
\theta_1 \dotsm \theta_{d-1}=\min\set{1, \tfrac{1}{2\norm{q\alpha}CHH_1\dotsm H_{d-1}},  \tfrac{P}{2q}, \max\set{\tfrac{q}{CHH_1\dotsm H_{d-1}}, \norm{q\alpha} P}}.
\end{equation}
It follows that if $|\vh_i| < \theta_iH_i$ for all $i$  and $\norm{\alpha M(\underline{\vh})_j} < \theta_1 \dotsm \theta_{d-1} P^{-1}$ for all $j$, then
\begin{enumerate}[(i)]
\item  $|M(\underline{\vh})_j| < \trecip{2} \norm{q\alpha}^{-1}$;
\item  $\norm{\alpha M(\underline{\vh})_j} < \recip{2} q^{-1}$;
\item  $|M(\underline{\vh})_j| < q$  or   $\norm{\alpha M(\underline{\vh})_j} < \norm{q\alpha}$.
\end{enumerate}
Applying Lemma \ref{heathbrown lemma}, we deduce that $M(\underline{\vh})_j = 0$
for $j= 1, \dots, n$.
 Incorporating this into \eqref{pre-shrinking weyl estimate} we obtain the estimate
\begin{equation}\label{post-shrinking weyl estimate}
\abs{\frac{S(\alpha, P)}{P^n}}^{2^{d-1}} \ll (\log P)^n \frac{  L_f(\theta_1H_1, \dots, \theta_{d-1} H_{d-1})}{(1 + \theta_1H_1)^{n}\dots (1 + \theta_{d-1}H_{d-1})^{n}},
\end{equation}
with $\theta_1 \dotsm \theta_{d-1}$ as in \eqref{theta product def} and where
$$
L_f(H_1, \dots, H_{d-1}):= \hash \set{\underline{\vh} \in \Z^{(d-1)n} : |\vh_i| < H_i  \text{ and } \nabla f^{[1]}_{\vh_1, \dots, \vh_{d-1}} =\mathbf{0} }.
$$
We are therefore led to the estimation of $L_f(H_1, \dots, H_{d-1})$.

Using the notation \eqref{sigma dimension notation}, we may partition the set of $\vh_1 \in (-H_1, H_1)^n \cap \Z^n$ according to the value of $\sigma_{\infty}(f_{\vh_1}^{[d-1]})$.  Using this and the pigeon-hole principle, we deduce the existence of an integer $s_1\in [0,n]$, a set $\mathcal{H}_1 \subset (-H_1, H_1)^n \cap \Z^n$ and $\vh_1 \in \mathcal{H}_1$ such that both of the following hold:
\begin{enumerate}[(i)]
\item  For each $\vh_1' \in \mathcal{H}_1$ we have $\sigma_{\infty}(f_{\vh_1'}^{[d-1]}) = s_1$.
\item  $L_f(H_1, \dots, H_{d-1}) \ll |\mathcal{H}_1| L_{f_{\vh_1}}(H_2, \dots, H_{d-1}).$
\end{enumerate}

Iterating this process, we can find integers $s_1, \dots, s_{d-2}$, sets $\mathcal{H}_1, \dots, \mathcal{H}_{d-2}$ and fixed elements $\vh_i \in \mathcal{H}_i$ such that
\begin{enumerate}[(i)]
\item  For each $\vh_i' \in \mathcal{H}_i$ we have $\sigma_{\infty}(f_{\vh_1, \dots, \vh_{i-1}, \vh_i'}^{[d-i]}) = s_i$.
\item  
\begin{equation}\label{first pigeon-hole bound 0}
L_f(H_1, \dots, H_{d-1}) \ll |\mathcal{H}_1| \dotsm |\mathcal{H}_{d-2}| L_{f_{\vh_1, \dots, \vh_{d-2}}}(H_{d-1}).
\end{equation}
\end{enumerate}
By Euler's identity, the singular locus of $f^{[2]}_{\vh_1, \dots, \vh_{d-2}}$ is the set of $\vx$ such that for each $i \in \set{1, \dots, n}$ we have  
\begin{align*}
0 & = \partial^{\ve_i} f^{[2]}_{\vh_1, \dots, \vh_{d-2}}(\vx)
	 = \sum_{j=1}^n \brac{\partial^{\ve_i + \ve_j} f_{\vh_1, \dots, \vh_{d-2}}^{[2]}} x_j.
\end{align*}
Hence by \eqref{e_i e_j}, the nullity of the linear map $\vh_{d-1} \mapsto M(\vh_1, \dots, \vh_{d-1})$ (over $\Q$) coincides with the dimension $s_{d-2} = \sigma_\infty(f^{[2]}_{\vh_1, \dots, \vh_{d-2}})$.  By elementary linear algebra (inducting on the nullity) one can show that for a given linear map $T:\C^n \to \C^m$ and any $\vy \in \C^m$ we have the uniform estimate
$$
\hash\set{\vx \in (-P, P)^n\cap \Z^n  : T\vx = \vy} \ll_n P^{n- \rank(T)}.
$$
Hence for $H_1 \geq 1$ we have
\begin{equation}\label{linear L estimate}
L_{f_{\vh_1, \dots, \vh_{d-2}}}(H_{d-1}) \ll H_{d-1}^{s_{d-2}}.
\end{equation}

For each $1\leq i \leq d-2$, define the algebraic variety
$$
X_{i, \infty} := \set{ \vy \in \A_{\Q}^n : \sigma_\infty(f^{[d-i]}_{\vh_1, \dots, \vh_{i-1}, \vy})  \geq s_{i}}.
$$
From \eqref{taylor applied to f} we have $f^{[d-i]}_{\vh_1, \dots, \vh_{i-1}, \vy} = \vy \cdot \nabla f^{[d-i+1]}_{\vh_1, \dots, \vh_{i-1}}$.  We may therefore employ Lemma \ref{dimension bound lemma} with $G = f^{[d-i+1]}_{\vh_1, \dots, \vh_{i-1}}$ to deduce that $X_{i, \infty}$ is an affine algebraic variety defined by $O_{d,n}(1)$ equations of degree $O_{d,n}(1)$.  Moreover, setting $s_0 := \sigma$, we see that the dimension of $X_{i, \infty}$ is at most $n -  (s_{i} - s_{i-1})^+$.  Taking $\mathcal{P} = \set{\infty}$ and $k_\infty = n - (s_{i} - s_{i-1})^+$ in Lemma \ref{counting lemma}, we conclude that for $H_i \geq 1$ we have 
$$
|\mathcal{H}_i|  \ll   H_i^{n- (s_{i} - s_{i-1})^+} .
$$

Combining this estimate for $|\mathcal{H}_i|$ with 
\eqref{first pigeon-hole bound 0} and  \eqref{linear L estimate}, 
we deduce that for $H_i \geq 1$ there exist integers $s_1, \dots, s_{d-2}$ such that
$$
L_f(H_1, \dots, H_{d-1}) \ll H_1^{n-(s_1 - s_0)^+} \dotsm H_{d-2}^{n- (s_{d-2}- s_{d-3})^+} H_{d-1}^{ s_{d-2}}.
$$
Set $B_i := 1 + \theta_i H_i$.  Using this in \eqref{post-shrinking weyl estimate}, we see that for any $ H_1, \dots, H_{d-1}$ belonging to the interval $[1,P]$ and any $\theta_1, \dots, \theta_{d-1} \in (0,1]$ whose product is equal to \eqref{theta product def}, we have
$$
 \abs{\frac{S(\alpha, P)}{P^n}}^{2^{d-1}}  \ll (\log P)^nB_1^{-(s_1 - s_0)^+} \dotsm B_{d-2}^{- (s_{d-2}- s_{d-3})^+} B_{d-1}^{-(n- s_{d-2})}.
$$

As we have no control over the values of the integers $s_i$, to proceed any further we must impose the condition that
$$
\theta_1 H_1 = \theta_2 H_2 = \dots = \theta_{d-1} H_{d-1}.
$$
Then since $s_0 = \sigma$ we have
\begin{align*}
B_1^{-(s_1 - s_0)^+} \dotsm B_{d-2}^{- (s_{d-2}- s_{d-3})^+} B_{d-1}^{-(n- s_{d-2})} & \leq B_1^{-(n- \sigma)}.
\end{align*}
Notice that $B_1\geq \theta_1H_1$. Moreover, writing $\widetilde H=H_1\dots H_{d-1}$ it follows from   \eqref{theta product def} that
\begin{align*}
\theta_1 H_1 =~& \brac{\theta_1 \dotsm \theta_{d-1} \widetilde H}^{\recip{d-1}} \\
	\asymp~& \min\set{\widetilde H, \tfrac{1}{\norm{q\alpha}H},  \tfrac{P\widetilde H}{q}, \max\set{\tfrac{q}{H}, 
	\widetilde H \norm{q\alpha} P}}^{\recip{d-1}}.
\end{align*}
Thus we arrive at the estimate
\begin{align*}
& \abs{\frac{S(\alpha, P)}{P^n}}^{2^{d-1}}    \ll 
(\log P)^{n}\max\Bigset{\trecip{\widetilde H}, \norm{q\alpha}H,  \tfrac{q}{P\widetilde H}, \min\bigset{\tfrac{H}{q}, \trecip{\widetilde H \norm{q\alpha} P}}}^{\frac{n-\sigma}{d-1}}.
\end{align*}
This bound is minimised by taking $H_1 = \dots = H_{d-1} = P$, which yields
$$
 \abs{\frac{S(\alpha, P)}{P^n}}^{2^{d-1}}  
  \ll (\log P)^{n}\max\Bigset{\trecip{P^{d-1}}, \norm{q\alpha} H, \tfrac{q}{P^d}, \min\bigset{\tfrac{H}{q}, \trecip{\norm{q\alpha}P ^d}}}^{\frac{n-\sigma}{d-1}},
$$
which thereby  completes the proof of the lemma.
\end{proof}

When the exponential sum $S(\alpha,P)$ involves a cubic polynomial one can get better estimates by applying Poisson summation instead of repeated applications of van der Corput differencing. This is one of the key innovations in Heath-Brown \cite{hb10}, for example, and it also played a critical role in \cite{41}.  

Given a fixed positive integer $q$ we reserve the symbols $b, c_1$ and $c_2$ for the following quantities
\begin{equation}\label{bcd defn}
b := \prod_{\substack{ p^e \| q\\ e \leq 2}} p^e, \qquad c_1 := \prod_{\substack{ p^e \| q\\ e > 2}} p^{\floor{\frac{e}{2}}}, \qquad c_2 := \prod_{\substack{ p^e \| q\\ e > 2,\ 2 \nmid e}} p,
\end{equation}
so that $q = bc_1^2c_2$.  Define the \emph{$r$-values} of $f$ with respect to $q$ to be the numbers 
\begin{equation}\label{r values notation}
r_i = r^{[d]}_i(f, q) := \prod_{\substack{p^e || bd \\ \sigma_p(f^{[d]}) = i}} p^e, \qquad R_\zeta = R^{[d]}_\zeta(f, q) := \prod_{\zeta \leq i \leq n} r_i^{i-\zeta}.
\end{equation}
Define the relative height of $f$ with respect to $P$ at scale $d$ by
$$
\mathrm{Height}_{P,d}\brac{f} := \mathrm{Height}\brac{P^{-d} f(P\vx)}.
$$
Notice that for $P \geq 1$ and $d \geq \deg(f)$ we have
$$
\mathrm{Height}\brac{f^{[d]}} = \mathrm{Height}_{P,d}\brac{f^{[d]}} \leq \mathrm{Height}_{P,d}\brac{f} \leq \mathrm{Height}\brac{f}.
$$
Bearing this notation in mind, the  following result is a reformulation of the key estimate in Browning and Heath-Brown \cite{41}.

\begin{lemma}[Cubic Poisson bound]\label{cubic poisson bound}
Suppose that $\deg(f) \leq 3$ and  $H$ is such that 
$\mathrm{Height}_{P,3}\brac{f} \leq H \leq P^{O(1)}$.
Then for any $q \leq P^2$, $\norm{q\alpha} \leq P^{-1}$ and $\zeta$ in the range $\sigma_\infty(f^{[3]}) \leq \zeta \leq n$ we have 
\begin{align*}
S(\alpha, P) 
&\ll
R^{[3]}_\zeta(f, q)^{\frac{1}{2}} P^{n+\eps}\Bigbrac{ \tfrac{\sqrt{q}}{P} + \sqrt{\norm{q\alpha} PH} + \tfrac{H^{1/6}
\min\set{c_1, c_2H}^{\frac{1}{6}}
}{b^{\frac{1}{2}}(c_1c_2)^{\frac{1}{3}}} }^{n-\zeta}.
\end{align*}
\end{lemma}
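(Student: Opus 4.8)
The plan is to obtain this bound by a careful transcription of the principal cubic exponential sum estimate of Browning and Heath-Brown \cite{41}, which is proved by Poisson summation; since the statement above is only a repackaging of their result, the work is essentially one of bookkeeping. I would first recall the shape of their argument. Under the hypotheses $q\le P^2$ and $\norm{q\alpha}\le P^{-1}$ one writes $\alpha=a/q+\beta$ with $(a,q)=1$ and $|\beta|=\norm{q\alpha}/q$, and applies Poisson summation in $\vx$ to rewrite $S(\alpha,P)$ in the form
$$
\frac{P^n}{q^n}\sum_{\vv\in\Z^n} S_q(a;\vv)\,I_\beta(\vv),
$$
where $S_q(a;\vv)=\sum_{\vx\bmod q}e\bigl((af(\vx)+\vv\cdot\vx)/q\bigr)$ is a complete exponential sum and $I_\beta(\vv)$ is an oscillatory integral of size $O(P^n)$ supported, up to a negligible error, on an explicit box in $\vv$ whose side length is a function of $q$, $P$, $\norm{q\alpha}$ and $H$. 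Here the relative height $H$ enters through the size of the Hessian of $P^{-3}f(P\cdot)$, which controls the stationary phase behaviour of $I_\beta$; estimating $I_\beta$ and summing over the admissible range of $\vv$ is what ultimately produces the terms $\sqrt{q}/P$ and $\sqrt{\norm{q\alpha}PH}$ inside the bracket.

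The remaining content of the bracket, namely $b^{-1/2}(c_1c_2)^{-1/3}H^{1/6}\min\{c_1,c_2H\}^{1/6}$, together with the overall weight $R^{[3]}_\zeta(f,q)^{1/2}$ and the exponent $n-\zeta$, comes from the multiplicative estimate for $S_q(a;\vv)$. By the Chinese remainder theorem $S_q=\prod_{p^e\|q}S_{p^e}$. For the prime powers with $e\le 2$ (whose product is $b$), together with the ramified prime $p\mid d$, one applies the standard Deligne-type bound to the non-singular part and a dimension count governed by $\sigma_p(f^{[3]})$ for the contribution of the singular locus of $f^{[3]}$ reduced modulo $p$; performing the sum over $\vv$ then yields the power $n-\zeta$ and the weight $R^{[3]}_\zeta(f,q)^{1/2}$. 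Indeed $R^{[3]}_\zeta(f,q)=\prod_{p^e\|bd}p^{e(\sigma_p(f^{[3]})-\zeta)^{+}}$, so the definition \eqref{r values notation} is precisely engineered to record this count. For the genuinely cube-full part of $q$, that is the factor $c_1^2c_2=\prod_{p^e\|q,\ e>2}p^e$, there is no Deligne-type saving, and one instead uses the elementary bound for exponential sums to prime-power moduli, whose strength depends on the size of the coefficients; this is the source of $b^{-1/2}(c_1c_2)^{-1/3}$ and of the factor $\min\{c_1,c_2H\}^{1/6}$, the alternative $c_2H$ reflecting the regime in which $P^{-3}f(P\cdot)$ is small relative to the modulus.

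Concretely, I would: (i) observe that \cite{41} already works with underlying polynomials of degree at most $3$ rather than with cubic forms, so no homogenisation is needed and the constant term $f^{[0]}$ may simply be absorbed into a unimodular phase; (ii) check that the hypotheses of \cite{41}'s lemma are implied by ours, in particular $q\le P^2$, $\norm{q\alpha}\le P^{-1}$, $\sigma_\infty(f^{[3]})\le\zeta\le n$ and $H\le P^{O(1)}$, the last of these guaranteeing that every dependence on $H$ lying outside the displayed factors is absorbed into $P^{\eps}$; and (iii) match notation, confirming that our $R^{[3]}_\zeta$, $b$, $c_1$, $c_2$ and $\mathrm{Height}_{P,3}(f)$ agree with the corresponding quantities of \cite{41} and that the triangle inequality collapses their estimate into the single clean bound stated here. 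I expect the only genuine obstacle to be this final step: reconciling the $r$-value weights and the cube-full factors with the formulation in \cite{41}, and verifying that no cross-terms survive the combination. I do not anticipate needing any new analytic ingredient beyond what \cite{41} already provides.
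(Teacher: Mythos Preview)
Your overall plan is sound: the lemma is indeed a repackaging of \cite[Prop.~2]{41}, and no new analytic input is needed. However, you have misidentified where the work lies. The paper does \emph{not} re-derive the Poisson summation argument you sketch; it simply quotes \cite[Prop.~2]{41} as a black box, obtaining
\[
S(\alpha,P)\ll R_\zeta^{1/2}P^{n+\eps}\bigl(Wq^{-1/2}\bigr)^{n-\zeta},
\]
where $V=\max\{qP^{-1},\sqrt{q\norm{q\alpha}HP}\}$ and $W=V+\min\{(c_1^2c_2H)^{1/3},(c_1V)^{1/2}+c_1^{5/6}H^{1/6}\}$. All of the actual content of the proof is the algebraic verification that $Wq^{-1/2}$ is bounded by the bracket in the stated lemma. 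This is more than a notation check or a triangle inequality: the term $\min\{(c_1^2c_2H)^{1/3},(c_1V)^{1/2}\}\cdot q^{-1/2}$ does not obviously collapse into the displayed form, and the paper handles it by a case split on whether $Hc_2>c_1$, invoking the elementary inequality $\max\{X,Y\}\ge\sqrt{XY}$ in the harder case to absorb the awkward term into $Vq^{-1/2}+H^{1/6}b^{-1/2}c_1^{-1/6}c_2^{-1/3}$. Your step~(iii) should therefore be expanded to carry out this manipulation explicitly; your steps~(i) and~(ii), and the long description of the Poisson machinery, are unnecessary once you cite \cite[Prop.~2]{41} directly.
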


\begin{proof}
The statement of the lemma is trivial if $\deg(f)<3$ since then we are obliged to take 
$\zeta
=\sigma_\infty(f^{[3]})=n$.  Suppose henceforth that $\deg(f)=3$.
Let
$$
V := \max\set{qP^{-1}, \sqrt{q\norm{q\alpha} HP}}
$$
and
$$
W:= V + \min\set{(c_1^2c_2H)^{\recip{3}}, (c_1V)^{\recip{2}} + c_1^{\frac{5}{6}}H^{\frac{1}{6}}}.
$$
Then by \cite[Prop.~2]{41} we have
$$
S(\alpha, P) \ll R_\zeta^{\frac{1}{2}}P^{n+\eps} \brac{Wq^{-\recip{2}}}^{n-\zeta}.
$$
Using $q = bc_1^2c_2$, we see that
$$
Wq^{-\recip{2}} \ll Vq^{-\recip{2}} + \min\set{ \frac{H^{\recip{3}}}{b^{\recip{2}}c_1^{\recip{3}}c_2^{\recip{6}}}, \sqrt{\frac{V}{bc_1c_2}}} + \min\set{ \frac{H^{\recip{3}}}{b^{\recip{2}}c_1^{\recip{3}}c_2^{\recip{6}}}, \frac{H^{\recip{6}}}{b^{\recip{2}}c_1^{\recip{6}}c_2^{\recip{2}}}},
$$
with 
$$
Vq^{-\recip{2}} \leq \frac{\sqrt{q}}{P} + \sqrt{\norm{q\alpha}HP}.
$$
It therefore suffices to establish that 
\begin{equation}\label{V condition to prove i}
\min\set{ \frac{H^{\recip{3}}}{b^{\recip{2}}c_1^{\recip{3}}c_2^{\recip{6}}}, \sqrt{\frac{V}{bc_1c_2}}}  \ll Vq^{-\recip{2}} + \min\set{ \frac{H^{\recip{3}}}{b^{\recip{2}}c_1^{\recip{3}}c_2^{\recip{6}}}, \frac{H^{\recip{6}}}{b^{\recip{2}}c_1^{\recip{6}}c_2^{\recip{3}}}}.
\end{equation}
If 
$$
 \frac{H^{\recip{3}}}{b^{\recip{2}}c_1^{\recip{3}}c_2^{\recip{6}}} \leq \frac{H^{\recip{6}}}{b^{\recip{2}}c_1^{\recip{6}}c_2^{\recip{3}}}
$$
then \eqref{V condition to prove i} follows immediately.  We may therefore assume that the opposite inequality holds, or equivalently (after re-arrangement), that
\begin{equation}\label{Hdc}
Hc_2 > c_1.
\end{equation}
In this case, \eqref{V condition to prove i} follows if we can prove that
$$
\sqrt{\frac{V}{bc_1c_2}}  \leq \max\set{\frac{V}{b^{\recip{2}} c_1 c_2^{\recip{2}}} , \frac{H^{\recip{6}}}{b^{\recip{2}}c_1^{\recip{6}}c_2^{\recip{3}}}}.
$$
By the trivial inequality $\max\set{X, Y} \geq \sqrt{XY}$, the right-hand side is at least 
$$
\brac{\frac{V}{bc_1c_2}}^{\recip{2}} \brac{\frac{Hc_2}{c_1}}^{\recip{12}},
$$
so that the desired condition now follows from \eqref{Hdc}.
\end{proof}

In Lemma \ref{weyl bound} we gave a detailed account of how $d-1$ applications of van der Corput differencing can be used to transform the exponential sum $S(\alpha,P)$ into a family of linear exponential sums indexed by  
$H_1,\dots,H_{d-1}\in [1,P]$, which we could ultimately estimate rather well. In the final stages of the argument we made the specialisation $H_1=\dots=H_{d-1}=P$, which brought us to the usual Weyl estimate  (as found in \cite{dav}). In the next result, we consider the effect of van der Corput differencing $d-k$ times only, for any $1\leq k\leq d$.
Rather than ending up with something of the form \eqref{eq:vanilla}, with $r=d-k$, it turns out that it will be more efficient to make a judicious application of the pigeon-hole principle at each differencing step separately, in order to control the singular locus of the underlying polynomial.

\begin{lemma}[van der Corput process]\label{van der corput process}
Suppose that $q \leq P^{O(1)}$ and that $\mathrm{Height}\brac{f} = O(1)$.  For each $1 \leq Q \leq P$ and $1 \leq k \leq d$, there exists an exponential sum $T(\alpha,P) $, with underlying polynomial $g$ of degree at most $k$ and $\mathrm{Height}_{P,k} (g)\ll Q^{2 - 2^{k+1 - d}}$, such that for some $\zeta \geq \max\set{\sigma, \sigma_{\infty}(g^{[k]})}$ we have
$$
\abs{\frac{S(\alpha,P)}{P^n}}^{2^{d-k}} \ll \frac{P^{\eps}}{Q^{\zeta-\sigma} \sqrt{R^{[k]}_\zeta(g, q)}} \abs{\frac{T(\alpha,P)}{P^n}}.
$$
Moreover, $T(\alpha,P)$ has weight in $\mathcal{S}^+(\vc')$ where $\vc'$ is the super-exponential sequence given by $c' = c$ and 
$
c_m' = c_m2^{(d-k)m}.
$
\end{lemma}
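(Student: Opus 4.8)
The plan is to iterate the van der Corput differencing step \eqref{base cauchy} exactly $d-k$ times, as in the proof of Lemma~\ref{weyl bound}, but to interpose a pigeon-hole step after each round so as to control the singular locus of the underlying polynomial over $\QQ$ and over the finite fields relevant to $q$. I would use difference ranges $H_i := Q^{2^{1-i}}$ for $1\le i\le d-k$, so that $H_1\cdots H_{d-k} = Q^{2-2^{k+1-d}}$. Iterating \eqref{base cauchy} with these ranges yields, after $d-k$ rounds, an inequality of the shape \eqref{eq:vanilla} with $r=d-k$, whose new underlying polynomial is $g = f_{\vh_1,\dots,\vh_{d-k}}$, of degree at most $k$. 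By iterating \eqref{taylor applied to f}, the leading form $g^{[k]} = f^{[k]}_{\vh_1,\dots,\vh_{d-k}}$ is multilinear in the shift vectors, hence has coefficients $\ll H_1\cdots H_{d-k}$; an analogous computation shows that each lower-degree part of $g$ is dominated once one multiplies by the rescaling factor $P^{j-k}$ present in $\mathrm{Height}_{P,k}$, using $Q\le P$. This gives $\mathrm{Height}_{P,k}(g)\ll Q^{2-2^{k+1-d}}$. The claim on the weight is immediate from the proof of Lemma~\ref{weyl bound}: each round replaces the super-exponential sequence $(c_m)$ by $(2^mc_m)$, so after $d-k$ rounds the weight lies in $\mathcal{S}^+(\vc')$ with $c'=c$ and $c'_m = 2^{(d-k)m}c_m$.

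The substance lies in the pigeon-hole. After the $i$-th round, the difference variable $\vh_i$ ranges over a box of side $2H_i$; I would partition this box according to the value of the tuple $\bigbrac{\sigma_\nu(f^{[d-i]}_{\vh_1,\dots,\vh_i})}_\nu$, with $\nu$ running over $\infty$ and the primes dividing $bk$ in the notation of \eqref{bcd defn}, keep the class contributing the largest share of the $\vh_i$-sum, and replace $\vh_i$ by a single representative of it. Since $q\le P^{O(1)}$, the total number of classes over all rounds is $(n+1)^{O(\log P/\log\log P)} = P^{o(1)}$, so this manoeuvre costs at most $P^\eps$. The size of the retained class is controlled by noting, via \eqref{taylor applied to f}, that $f^{[d-i]}_{\vh_1,\dots,\vh_i} = \vh_i\cdot\nabla f^{[d-i+1]}_{\vh_1,\dots,\vh_{i-1}}$, so the class sits, simultaneously for every $\nu$, inside the variety $B_\nu\bigbrac{f^{[d-i+1]}_{\vh_1,\dots,\vh_{i-1}}, s_{i,\nu}}$ of Lemma~\ref{dimension bound lemma}; that lemma bounds its dimension by $n - (s_{i,\nu}-s_{i-1,\nu})^+$, where $s_{i-1,\nu}$ is the value $\sigma_\nu(f^{[d-i+1]}_{\vh_1,\dots,\vh_{i-1}})$ fixed at the previous round (with $s_{0,\infty}=\sigma$, $s_{0,p}=\sigma_p(f^{[d]})$), and Lemma~\ref{counting lemma}, applied with $\mathcal P = \set{\infty}\cup\set{p:p\mid bk}$, then bounds the number of lattice points in the class.

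Multiplying the resulting estimates over the $d-k$ rounds against the $H_i^{-n}$ factors supplied by \eqref{eq:vanilla}, the contribution of $\nu=\infty$ reorganizes — via the inequality $\sum_i (s_{i,\infty}-s_{i-1,\infty})^+ \ge (s_{d-k,\infty}-\sigma)^+$ — into a power $Q^{-(\zeta-\sigma)}$, while the contributions of the finite primes assemble, through the definition \eqref{r values notation}, into $R^{[k]}_\zeta(g,q)^{-1/2}$, for a suitable integer $\zeta \ge \max\set{\sigma,\sigma_\infty(g^{[k]})}$ (possibly $\zeta=\sigma$). Taking $T(\alpha,P) := S_{\vh_1,\dots,\vh_{d-k}}(\alpha,P)$ and the associated $g$ and $\zeta$ then gives the lemma.

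I expect the main obstacle to be this last assembly. Because the ranges $H_i$ must decrease geometrically in order to keep $\mathrm{Height}\bigbrac{g^{[k]}}$ small, the telescoping of the singular-locus exponents over $\QQ$ does not close up on its own, and one has to be careful about the order in which the rounds are carried out and which pigeon-hole class is retained at each round, so that the accumulated saving can legitimately be attributed to a single $\zeta$ bounded below by $\sigma_\infty(g^{[k]})$. The parallel bookkeeping for the finite primes — matching the powers of $p$ delivered by Lemma~\ref{counting lemma} with the exponents appearing in $R^{[k]}_\zeta(g,q)$, and absorbing the bounded contribution of the primes dividing $k$ to which Lemma~\ref{dimension bound lemma} does not directly apply — requires the same care.
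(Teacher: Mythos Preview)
Your overall strategy matches the paper's: iterate van der Corput $r=d-k$ times, pigeon-hole at each round on the singular-locus dimensions over $\infty$ and over the relevant primes, bound the retained class via Lemmas~\ref{dimension bound lemma} and~\ref{counting lemma}, and multiply out. The weight and height claims go through exactly as you indicate.

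The concrete gap is your choice of differencing ranges. You set $H_i = Q^{2^{1-i}}$, \emph{decreasing} in $i$; the paper takes $H_j = Q^{1/2^{r-j}}$, \emph{increasing} in $j$. Your stated reason for decreasing $H_i$ --- to keep $\mathrm{Height}_{P,k}(g)$ small --- is unfounded: that height is governed only by the product $H_1\cdots H_r$, which is $Q^{2-2^{1-r}}$ either way. What does force the ordering is the iterated Cauchy--Schwarz in \eqref{iterated cauchy}: the $i$th factor $|\mathcal{H}_i|/H_i^n$ enters to the power $2^{r-i}$, so for the singular-locus savings $H_i^{-(\zeta_i-\zeta_{i-1})}$ to telescope to $Q^{-(\zeta_r-\zeta_0)}$ one needs $H_i^{2^{r-i}}$ constant in $i$, which is precisely the paper's increasing choice. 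With yours, $H_i^{2^{r-i}} = Q^{2^{r-2i+1}}$, and the resulting exponent $\sum_i(\zeta_i-\zeta_{i-1})2^{r-2i+1}$ can fall strictly below $\zeta_r-\zeta_0$ (already for $r=2$, $\zeta_0=\zeta_1=0$, $\zeta_2=1$). Thus the ``main obstacle'' you anticipate is not intrinsic to the problem but is created by reversing the $H_i$; with the correct ordering the $\QQ$-side telescoping is exact and no special care about the order of rounds is needed.

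On the prime side, the paper's assembly is also more than a separation of $\nu=\infty$ from the finite places. After carrying out the rounds forward and recording the full tuples $(s_{i,\nu})_\nu$, one unwinds the dimension-growth product \emph{backwards}: at step $i$ a place $\nu$ maximising the term from Lemma~\ref{counting lemma} is selected, one sets $s_i:=s_{i,\nu}$, $t_i:=s_{i-1,\nu}$, restricts at step $i-1$ to places $\mu$ with $s_{i-1,\mu}\ge t_i$, and defines $\zeta_i := \zeta_{i-1}+(s_i-t_i)^+$. This is what forces $\zeta_r\ge s_{r,\infty}=\sigma_\infty(g^{[k]})$ and makes the prime powers collapse to $\prod_{s_{r,p}>\zeta_r}p^{-(s_{r,p}-\zeta_r)}$, which is then compared to $R^{[k]}_{\zeta_r}(g,q)^{-1/2}$ via $\rho_\zeta\gg r_\zeta^{1/2}$; the primes to which Lemma~\ref{dimension bound lemma} does not apply are discarded by restricting to $\nu>C$ with $C\ge d$.
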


\begin{proof}
After a single iteration of van der Corput differencing, as in \eqref{base cauchy} and \eqref{base Sh_1}, we deduce that for any real $H_1 \in [1, P]$ we have the bound
$$
\abs{\frac{S(\alpha, P)}{P^n}}^2 \ll \frac{1}{H_1^n} \sum_{-H_1 < \vh_1 < H_1}\abs{\frac{S_{\vh_1}(\alpha, P)}{P^n}} ,
$$
where
$$
S_{\vh_1}(\alpha, P) := \sum_{\vx \in \Z^n} \omega_{\vh_1/P}(\vx/P) e(\alpha(f_{\vh_1}(\vx))).
$$

With $b$ and $c_2$ defined as in \eqref{bcd defn}, let  
$$
 \mathcal{V} := \set{p : p \mid bc_2} \cup \set{\infty}.
 $$
Notice that 
 $$
 \bigbrac{\sigma_{\nu}(f_{\vh_1}^{[d-1]})}_{\nu \in \mathcal{V}} \in \prod_{\nu \in \mathcal{V}} \set{0, 1, \dots, n}.
 $$
Since $|\mathcal{V}| \leq \omega(q) + 1$, we see that there are at most $O_{n, \eps}(q^\eps)$ choices for $ \bigbrac{\sigma_{\nu}(f_{\vh_1}^{[d-1]})}_{\nu \in \mathcal{V}}$.  It follows from the pigeon-hole principle that there exists a tuple of integers $\vs_1$ and a set $\mathcal{H}_1 \subset (-H_1, H_1)^n \cap \Z^n$ such that both of the following hold

\begin{enumerate}[(i)]
\item  For each $\vh_1 \in \mathcal{H}_1$ we have $ \bigbrac{\sigma_{\nu}(f_{\vh_1}^{[d-1]})}_{\nu \in \mathcal{V}} = \vs_1$.
\item  
$$
\sum_{-H_1 < \vh_1 < H_1 }|S_{\vh_1}(\alpha, P)| \ll q^\eps \sum_{\vh_1 \in \mathcal{H}_1} |S_{\vh_1}(\alpha, P)|.
$$
\end{enumerate}
Since $q \leq P^{O(1)}$, we deduce that there exists $\vh_1 \in \mathcal{H}_1$ satisfying
$$
\abs{\frac{S(\alpha, P)}{P^n}}^2 \ll P^\eps\frac{ |\mathcal{H}_1|}{H_1^n} \abs{\frac{S_{\vh_1}(\alpha, P)}{P^n}} .
$$

Next, 
let $r := d-k$. Applying the van der Corput differencing process to $S_{\vh_1}(\alpha, P)$ and iterating, we deduce that for any $1\leq H_1, \dots, H_r \leq P$ there exist sets $\mathcal{H}_i \subset (-H_i, H_i)^n \cap \Z^n$ and elements $\vh_i \in \mathcal{H}_i$  such that
\begin{equation}\label{iterated cauchy}
\begin{split}
\abs{\frac{S(\alpha, P)}{P^n}}^{2^r} \ll~& P^\eps\brac{\frac{ |\mathcal{H}_1|}{H_1^n}}^{2^{r-1}} \brac{\frac{ |\mathcal{H}_2|}{H_2^n}}^{2^{r-2}}\dotsm \brac{\frac{ |\mathcal{H}_r|}{H_r^n}}\\
&\times
\abs{\frac{S_{\vh_1, \dots, \vh_{r}}(\alpha, P)}{P^n} }.
\end{split}
\end{equation}
Moreover, there exist tuples of integers $\vs_i=(s_{i,\nu})_{\nu\in \mathcal{V}}$ such that for any $\vh_i' \in \mathcal{H}_i$ we have 
\begin{equation}\label{pigeon holed singluar locus dimension}
\bigbrac{\sigma_\nu(f^{[d-i]}_{\vh_1, \dots, \vh_{i-1}, \vh_i'})}_{\nu \in \mathcal{V}} = \vs_i.
\end{equation}

For notational convenience, let us define
$$
\vs_0 := \bigbrac{\sigma_\nu(f^{[d]})}_{\nu \in \mathcal{V}}.
$$
For any prime $p$ we have $s_{0,p} \geq s_{0, \infty}$, with strict inequality for only finitely many primes.  It follows that there exists a constant $C = O_{f}(1)$ such that $C \geq d$ and for any $p > C$ we have $s_{0,p} = s_{0, \infty}$.  Set 
$$
\mathcal{V}_C := \set{ \nu \in \mathcal{V} : \nu > C}.
$$ 
For each $\nu \in \mathcal{V}_C$ define the sets
$$
X_{\nu} := \set{ \vy \in \A_{\F_\nu}^n : \sigma_\nu(f^{[d-r]}_{\vh_1, \dots, \vh_{r-1}, \vy})  \geq s_{r,\nu}}.
$$
Notice that $f^{[d-r]}_{\vh_1, \dots, \vh_{r-1}, \vy} = \vy \cdot \nabla f^{[d-(r-1)]}_{\vh_1, \dots, \vh_{r-1}}$ by \eqref{taylor applied to f}.  We may therefore employ Lemma \ref{dimension bound lemma} with $G = f^{[d-(r-1)]}_{\vh_1, \dots, \vh_{r-1}}$, and deduce that $X_{\nu}$ is an affine algebraic variety defined by $O_{d,n}(1)$ equations of degree $O_{d,n}(1)$.  Moreover the dimension of $X_{\nu}$ is at most $n -  (s_{r,\nu} - s_{r-1, \nu})^+$.  Taking $\mathcal{P} = \mathcal{V}_C$ and $k_\nu = n - (s_{r,\nu} - s_{r-1, \nu})^+$ in Lemma \ref{counting lemma}, we conclude that 
$$
\frac{|\mathcal{H}_r|}{H_r^n}  \ll q^\eps  \sum_{\nu \in \mathcal{V}_C}  H_r^{- (s_{r,\nu} - s_{r-1, \nu})^+} \prod_{\substack{p \in \mathcal{V}_C \\ s_{r,p} - s_{r-1, p} \geq\\ (s_{r,\nu} - s_{r-1, \nu})^+ }} p^{-(s_{r,p} - s_{r-1, p}) + (s_{r,\nu} - s_{r-1, \nu})^+ }.
$$
By the pigeon-hole principle, we see that there exists  $\nu \in \mathcal{V}_C$ such that on setting $s_r := s_{r, \nu}$ and $t_{r} := s_{r-1, \nu}$ we have
\begin{equation}\label{pigeon-holed H_r}
\frac{|\mathcal{H}_r|}{H_r^n}  \ll q^{2\eps}\    H_r^{- (s_r - t_{r})^+} \prod_{\substack{p \in \mathcal{V}_C \\ s_{r,p} - s_{r-1, p} \geq\\ (s_r - t_{r})^+ }} p^{-(s_{r,p} - s_{r-1, p}) + (s_r - t_{r})^+ }.
\end{equation} 
Next, define the set
$$
\mathcal{V}^{(r-1)}_{C} := \set{ \mu \in \mathcal{V}_C : s_{r-1,\mu} \geq t_r}.
$$
Repeating the argument leading to  \eqref{pigeon-holed H_r}, we deduce that there is  a $\mu \in \mathcal{V}_{C}^{(r-1)}$ such that on setting $s_{r-1} : = s_{r-1, \mu}$ and $t_{r-1}: = s_{r-2, \mu}$, we have
$$
\frac{|\mathcal{H}_{r-1}|}{H_{r-1}^n}  \ll q^{\eps}\    H_{r-1}^{- (s_{r-1} - t_{r-1})^+} \prod_{\substack{p \in \mathcal{V}_{C}^{(r-1)} \\ s_{r-1,p} - s_{r-2, p} \geq\\ (s_{r-1} - t_{r-1})^+ }} p^{-(s_{r-1,p} - s_{r-2, p}) + (s_{r-1} - t_{r-1})^+ }.
$$

Let us write $\mathcal{V}^{(r)}_C$ for $\mathcal{V}_C$ and $t_{r+1} := 0$.  Iterating the above process, we obtain integers $s_i$ and $t_i$ for $1\leq i \leq r$, with $s_{i} \geq t_{i+1}$, such that on setting
$$
\mathcal{V}^{(i)}_{C} = \set{ \nu \in \mathcal{V}_C : s_{i,\nu} \geq t_{i+1}},
$$
we have the bound
\begin{equation}\label{pigeon-holed H_i}
\frac{|\mathcal{H}_{i}|}{H_{i}^n}  \ll q^{\eps}\    H_{i}^{- (s_{i} - t_{i})^+} \prod_{\substack{p \in \mathcal{V}_{C}^{(i)} \\ s_{i,p} - s_{i-1, p} \geq\\ (s_{i} - t_{i})^+ }} p^{-(s_{i,p} - s_{i-1, p}) + (s_{i} - t_{i})^+ }.
\end{equation}

Let us set $\zeta_0 := \sigma$ and for $i\geq 1$ set
\begin{align*}
\zeta_i  :&=  (s_i - t_{i})^+ + \zeta_{i-1}\\
	& = (s_i - t_{i})^+ + (s_{i-1} - t_{i-1})^+ +\dots + (s_1 - t_{1})^+ + \sigma.
\end{align*}
Notice that $s_{i, \nu} \geq s_{i, \infty}$ for all $i$ and $\nu$.  Also $t_1 = s_{0, \nu}$ for some $\nu \in \mathcal{V}_C$, and by our choice of $C$ this means that $t_1 = \sigma$.  
We claim that 
\begin{equation}\label{eta_i estimate}
\zeta_j \geq s_j  \geq \max\set{t_{j+1}, s_{j, \infty}}, \quad \mbox{for $0\leq j\leq r$}.
\end{equation}
The second inequality follows since  $s_j \geq t_{j+1}$ for each $j \leq r$. To see the first inequality we argue by induction on $j$, the case $j=0$ being trivial. 
For $j>0$ we need to show that 
$\zeta_j=
(s_j - t_{j})^++\zeta_{j-1}\geq s_j. $  
Now the  induction hypothesis implies that $\zeta_{j-1}\geq s_{j-1}$. 
If $s_j\geq t_j$ then 
$\zeta_j\geq s_j-t_j+s_{j-1}\geq s_j$.
If, on the other hand, 
$s_j< t_j$ then 
$\zeta_j\geq s_{j-1}\geq t_j>s_j$.
This therefore establishes \eqref{eta_i estimate}.

The estimate \eqref{pigeon-holed H_i} now becomes
$$
\frac{|\mathcal{H}_i|}{H_i^n} \ll q^{\eps}\  H_i^{- (\zeta_i - \zeta_{i-1})} \prod_{\substack{p \in \mathcal{V}^{(i)}_C \\ s_{i,p} - s_{i-1, p} \geq\\ \zeta_i - \zeta_{i-1} }} p^{-(s_{i,p}-\zeta_i) + (s_{i-1, p} -  \zeta_{i-1}) }.
$$
An expression of the form $\prod_{p \in \mathcal{P}} p^{e_p}$ is minimised by taking $\mathcal{P}=\set{p : e_p < 0}$ and maximised by taking $\mathcal{P} = \set{p : e_p > 0}$.  Therefore
\begin{align*}
\prod_{\substack{p \in \mathcal{V}^{(i)}_C \\ s_{i,p} - s_{i-1, p} \geq\\ \zeta_i - \zeta_{i-1} }}  p^{-(s_{i,p}-\zeta_i) + (s_{i-1, p} -  \zeta_{i-1}) }  
&\leq \prod_{\substack{p \in \mathcal{V}^{(i)}_C \\ s_{i,p}  \geq \zeta_i  }} p^{-(s_{i,p}-\zeta_i) + (s_{i-1, p} -  \zeta_{i-1}) }\\
&\leq 
\prod_{\substack{p \in \mathcal{V}_C \\ s_{i,p}  \geq\max\set{ \zeta_i, t_{i+1}}  }} 
\hspace{-0.4cm}
p^{-(s_{i,p} - \zeta_i)} 
\hspace{-0.3cm}
\prod_{\substack{p \in \mathcal{V}_C \\ s_{i-1,p}  \geq \zeta_{i-1}  }} p^{(s_{i-1, p}  - \zeta_{i-1}) }.
\end{align*}
By \eqref{eta_i estimate} we have $\max\set{\zeta_i, t_{i+1}} = \zeta_i$.  Re-setting $\eps$ and using the estimate $q \leq P^{O(1)}$, we see that 
$$
\frac{|\mathcal{H}_i|}{H_i^n} \ll P^{\eps}\  H_i^{ - (\zeta_i -\zeta_{i-1})} \prod_{\substack{p \in \mathcal{V}_C \\ s_{i,p}  > \zeta_i}} p^{-(s_{i,p} - \zeta_i) }\prod_{\substack{p \in \mathcal{V}_C \\ s_{i-1,p}  > \zeta_{i-1}  }} p^{(s_{i-1, p}  - \zeta_{i-1}) }.
$$

The above process produces a sequence $\zeta_r \geq \dots \geq \zeta_1 \geq \zeta_0 = \sigma$ such that, on setting 
$$
L_i := \prod_{\substack{p \in \mathcal{V}_C \\ s_{i,p}  > \zeta_i }} p^{s_{i,p} - \zeta_i },
$$
we have 
$$
\frac{|\mathcal{H}_i|}{H_i^n} \ll P^{\eps}\  H_i^{ - (\zeta_i -\zeta_{i-1})}\frac{ L_{i-1} }{L_{i}}.
$$
Hence 
\begin{align*}
\brac{\frac{ |\mathcal{H}_1|}{H_1^n}}^{2^{r-1}} \brac{\frac{ |\mathcal{H}_2|}{H_2^n}}^{2^{r-2}}& \hspace{-0.1cm}
\dotsm \brac{\frac{ |\mathcal{H}_r|}{H_r^n}}\\ & \ll   \frac{P^\eps L_0^{2^{r-1}}L_r^{-1}}{L_1^{2^{r-2}} L_2^{2^{r-3}} \dotsm L_{r-1} }\prod_{i=1}^{r} H_i^{(\zeta_{i-1}-\zeta_i ) 2^{r-i}} .
\end{align*}
For each $\nu \in \mathcal{V}_C$ we have $s_{0, \nu} = s_{0, \infty} = \sigma = \zeta_0$, so  that $L_0 = 1$.  Also for all $1\leq i\leq r-1$ we have $L_i \geq 1$.  Therefore the left hand side is
$$
\ll P^\eps H_1^{(\zeta_{0}-\zeta_1) 2^{r-1}}H_2^{(\zeta_{1}-\zeta_2) 2^{r-2}}\dotsm H_r^{(\zeta_{r-1}-\zeta_r) } \prod_{\substack{p \in \mathcal{V}_C \\ s_{r,p}  > \zeta_r }} p^{-(s_{r,p} - \zeta_r)} .
$$
Let us take 
\begin{equation}\label{eq:choose H}
H_j=Q^{\frac{1}{2^{r-j}}}, \quad \mbox{for $1\leq j\leq r$}.
\end{equation}
Then we deduce that
\begin{equation}\label{H_i product bound II}
\brac{\frac{ |\mathcal{H}_1|}{H_1^n}}^{2^{r-1}} \brac{\frac{ |\mathcal{H}_2|}{H_2^n}}^{2^{r-2}}\dotsm \brac{\frac{ |\mathcal{H}_r|}{H_r^n}} \ll P^\eps\ Q^{\zeta_0-\zeta_r} \prod_{\substack{p \in \mathcal{V}_C \\ s_{r,p}  > \zeta_r }} p^{-(s_{r,p} - \zeta_r)} .
\end{equation}

Let $g = f_{\vh_1, \dots, \vh_r}$ with each $\vh_i$ the fixed element of $\mathcal{H}_i$ determined by the van der Corput process \eqref{iterated cauchy}. Put $\zeta := \zeta_r$ and $s := s_{r, \infty}$, so $\zeta \geq s$ by  \eqref{eta_i estimate}.  Recall that $r = d-k$ so that by \eqref{pigeon holed singluar locus dimension} we have $\sigma_\nu(g^{[k]}) = s_{r, \nu}$.
In  the notation \eqref{r values notation} it therefore follows that
$$
r_\zeta := r^{[k]}_\zeta(g, q) = \prod_{\substack{ p^e \| bd\\ s_{r,p} = \zeta}}  p^e.
$$
Set 
$$
\rho_\zeta := \prod_{\substack{ p | bd,\ p > C\\ s_{r,p} = \zeta}} p.
$$
We note that $\rho_\zeta \gg r_\zeta^{\frac{1}{2}}$.  One can then re-write \eqref{H_i product bound II} using our above notation to conclude that there exists an integer $\zeta \geq s$ such that 
\begin{align*}
\brac{\frac{ |\mathcal{H}_1|}{H_1^n}}^{2^{r-1}} \brac{\frac{ |\mathcal{H}_2|}{H_2^n}}^{2^{r-2}}\dotsm \brac{\frac{ |\mathcal{H}_r|}{H_r^n}}& \ll P^\eps\ Q^{\sigma-\zeta} (\rho_{\zeta +1} \rho_{\zeta + 2}^2 \dotsm \rho_n^{n - \zeta})^{-1} \\
& \ll P^\eps\ Q^{\sigma-\zeta} \brac{r_{\zeta +1} r_{\zeta + 2}^2 \dotsm r_n^{n - \zeta}}^{-\frac{1}{2}}\\
& = P^\eps\ Q^{\sigma-\zeta} R^{[k]}_\zeta(g, q)^{-\frac{1}{2}}.
\end{align*}

To complete the proof of Lemma \ref{van der corput process} it remains to establish that $g$ satisfies the bound 
$\mathrm{Height}_{P,k}(g) \ll Q^{2- 2^{k+1-d}}$.  Taylor's formula implies that
$$
f_{\vh}(\vx) = f(\vx + \vh) - f(\vx) = \sum_{|\vm|>0} \frac{\vh^\vm}{\vm!} \partial^\vm f(\vx).
$$
Hence there exist forms $G_l$ and $F_l$, each of degree $l$ with $\mathrm{Height}(G_l)\ll 1$ and $\mathrm{Height}(F_l) \ll \mathrm{Height}\brac{f} = O(1)$ such that 
$$
f_{\vh}(\vx) = \sum_{l=0}^{d-1} G_{d-l}(\vh) F_l(\vx).
$$
Supposing that $|\vh| \leq H \leq P$ we have
\begin{align*}
\mathrm{Height}_{P, d-1}\brac{f_\vh} & \ll \sum_{l=0}^{d-1}  P^{l-d+1} |G_{d-l}(\vh)|\mathrm{Height}\brac{F_l} \ll H.
\end{align*}
Hence if $H_1,\dots,H_{r}$ are chosen as in \eqref{eq:choose H}
it follows from  induction that 
$$
\mathrm{Height}_{P, d-r}\brac{f_{\vh_1, \dots, \vh_r}} \ll H_1\dots H_r = Q^{\sum_{i=0}^{r-1} 2^{-i}},
$$
as required.  
\end{proof}

We are now ready to reveal our two main estimates for the exponential sum $S(\alpha,P)$. The first of these is Proposition \ref{birch + van der corput}. It is based on applying van der Corput differencing $d-k$ times (Lemma \ref{van der corput process}) before  applying the Weyl bound to the resulting exponential sum with underlying polynomial of degree at most $k$ (Lemma~\ref{weyl bound}). The second result is Proposition \ref{vdc + cubic lemma}. This is proved using  $d-3$  applications of  van der Corput differencing  (Lemma \ref{van der corput process}) together with an application of
the bound for cubic exponential sums obtained via  Poisson summation 
(Lemma \ref{cubic poisson bound}).

\begin{proposition}[van der Corput + Weyl]\label{birch + van der corput}
Let $
B_k := (k-1)2^{d-1}$ and let $V_k := 2^{d+1 - k} -2$,
for $1 \leq k \leq d$. 
 Suppose that $\mathrm{Height}\brac{f} = O(1)$.  Then 
\begin{align*}
S(\alpha, P) \ll~&
P^{n+\eps}
\Bigbrac{ P^{-2^{1-d}} + \norm{q\alpha}^{\recip{B_k+V_k}} + \bigbrac{qP^{-k}}^\recip{B_k}\\ 
&+ \min\bigset{ q^{-\recip{B_k+V_k}}, \bigbrac{\norm{q\alpha} P^k}^{-\recip{B_k}} }}^{n-\sigma}.
\end{align*}
\end{proposition}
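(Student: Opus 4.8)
The plan is to combine the van der Corput process (Lemma~\ref{van der corput process}) with the Weyl bound (Lemma~\ref{weyl bound}) and then optimise over the auxiliary parameter. Fix the value of $k$ and let $Q\in[1,P]$ be free for now, and set $\gamma:=2-2^{k+1-d}$, so that $\gamma\cdot 2^{d-k}=2^{d+1-k}-2=V_k$. Applying Lemma~\ref{van der corput process} produces an exponential sum $T(\alpha,P)$, with weight in a suitable class $\mathcal{S}^{+}(\vc')$ and underlying polynomial $g$ of degree at most $k$ satisfying $\mathrm{Height}_{P,k}(g)\ll Q^{\gamma}$, together with an integer $\zeta\geq\max\{\sigma,\sigma_\infty(g^{[k]})\}$ for which $|S(\alpha,P)/P^n|^{2^{d-k}}\ll P^\eps Q^{-(\zeta-\sigma)}R^{[k]}_\zeta(g,q)^{-1/2}|T(\alpha,P)/P^n|$. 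I would then raise this to the power $2^{k-1}$ and insert the Weyl bound of Lemma~\ref{weyl bound} applied to $T(\alpha,P)$ --- with $d$ replaced by $k$, with $\sigma$ replaced by $s:=\sigma_\infty(g^{[k]})$, and with $H$ replaced by a constant multiple of $Q^{\gamma}$ (legitimate since $\mathrm{Height}(g^{[k]})\leq\mathrm{Height}_{P,k}(g)$). Discarding the factor $R^{[k]}_\zeta(g,q)\geq 1$ this leaves
$$
\Bigabs{\frac{S(\alpha,P)}{P^n}}^{2^{d-1}}\ll P^\eps\,Q^{-2^{k-1}(\zeta-\sigma)}\,E(Q)^{(n-s)/(k-1)},\qquad E(Q):=P^{1-k}+\norm{q\alpha}Q^{\gamma}+qP^{-k}+\min\Bigset{Q^{\gamma}q^{-1},\,\trecip{\norm{q\alpha}P^k}}.
$$

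Next I would eliminate the unknowns $\zeta$ and $s$. One may assume $q\leq P^k$, since otherwise $(qP^{-k})^{1/B_k}>1$ and the claimed bound is trivial; for the choices of $Q$ made below this forces $E(Q)=O(1)$. Then, using $s\leq\zeta$ and $0\leq\zeta-\sigma\leq n-\sigma$ and splitting according to whether $Q^{2^{k-1}}E(Q)^{1/(k-1)}$ is at least $1$ or less than $1$, one obtains the elementary inequality $Q^{-2^{k-1}(\zeta-\sigma)}E^{(n-s)/(k-1)}\ll E^{(n-\sigma)/(k-1)}+Q^{-2^{k-1}(n-\sigma)}$. Taking $2^{d-1}$-th roots and recalling $B_k=(k-1)2^{d-1}$ together with $2^{k-1}/2^{d-1}=2^{k-d}$ yields, for every admissible $Q$,
$$
S(\alpha,P)\ll P^{n+\eps}\Bigbrac{E(Q)^{1/B_k}+Q^{-2^{k-d}}}^{n-\sigma}.
$$

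It remains to choose $Q$ and to verify that $E(Q)^{1/B_k}+Q^{-2^{k-d}}$ is dominated by the bracketed expression in the statement. The whole point of the identity $\gamma\cdot 2^{d-k}=V_k$ is that $Q=\norm{q\alpha}^{-2^{d-k}/(B_k+V_k)}$ makes the ``van der Corput loss'' $Q^{-2^{k-d}}$ and the Weyl main term $(\norm{q\alpha}Q^{\gamma})^{1/B_k}$ coincide, both then equalling $\norm{q\alpha}^{1/(B_k+V_k)}$, while $Q=q^{2^{d-k}/(B_k+V_k)}$ turns the cubic-type term $(Q^{\gamma}q^{-1})^{1/B_k}$ into $q^{-1/(B_k+V_k)}$. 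I would therefore take $Q:=\min\{P,\ \norm{q\alpha}^{-2^{d-k}/(B_k+V_k)},\ q^{2^{d-k}/(B_k+V_k)}\}$ when $\norm{q\alpha}<q^{B_k/(B_k+V_k)}P^{-k}$, and $Q:=\min\{P,\ \norm{q\alpha}^{-2^{d-k}/(B_k+V_k)}\}$ otherwise (in the latter range the last term of the statement equals $(\norm{q\alpha}P^k)^{-1/B_k}$, which dominates the cubic-type term automatically, so the $q$-cap is unnecessary). A short but fiddly case analysis --- split according to which of the three quantities attains the minimum defining $Q$, and according to which of $q^{-1/(B_k+V_k)}$, $(\norm{q\alpha}P^k)^{-1/B_k}$ attains the minimum in the statement --- then shows that $P^{1-k}$ contributes $P^{-2^{1-d}}$, that $qP^{-k}$ contributes $(qP^{-k})^{1/B_k}$, and that all remaining contributions are absorbed into $\norm{q\alpha}^{1/(B_k+V_k)}$, into $\min\{q^{-1/(B_k+V_k)},(\norm{q\alpha}P^k)^{-1/B_k}\}$, or (when $Q$ is capped at $P$) back into $P^{-2^{1-d}}$ via $2^{k-d}\geq 2^{1-d}$.

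The hard part is exactly this last bookkeeping. The loss term $Q^{-2^{k-d}}$ can never be pushed below $P^{-2^{1-d}}$, so one must check in every regime of $q$ and $\norm{q\alpha}$ that it --- and simultaneously the Weyl and cubic-type terms --- is swallowed by a genuine term of the target bound; equivalently, that the exponents $B_k$ and $V_k$ come out exactly right. By contrast the differencing and Weyl inputs are supplied wholesale by Lemmas~\ref{van der corput process} and~\ref{weyl bound}, so the genuinely new ingredient is the choice of, and optimisation over, the parameter $Q$.
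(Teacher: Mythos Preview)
Your proposal is correct and follows essentially the same route as the paper: apply Lemma~\ref{van der corput process}, feed the resulting degree-$k$ sum into Lemma~\ref{weyl bound}, eliminate the unknown $\zeta$ (and $s$) by comparing $Q^{(k-1)2^{k-1}}\Xi$ with $1$, and then optimise over $Q$. The only cosmetic difference is that the paper makes a single unified choice $Q^{(k-1)2^{k-1}}=\min\{P^{k-1},\,\norm{q\alpha}^{-(k-1)2^{k-1}/(\tau+(k-1)2^{k-1})},\,P^kq^{-1},\,\max\{\ldots\}\}$ and verifies directly that the $Q^{-(k-1)2^{k-1}}$ term then dominates $\Xi$, whereas you split into two regimes of $\norm{q\alpha}$ and run the bookkeeping by hand; both lead to the same bound.
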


\begin{proof}
Let $1\leq Q\leq P$. 
Applying Lemma \ref{van der corput process} we obtain
$$
\abs{\frac{S(\alpha,P)}{P^n}}^{2^{d-1}} \ll \frac{P^{\eps}}{Q^{(\zeta-\sigma)2^{k-1}}} \abs{\frac{T(\alpha,P)}{P^n}}^{2^{k-1}},
$$
for some exponential sum $T(\alpha,P)$ with underlying polynomial $g$ satisfying the conclusions of the lemma.
In particular $\zeta \geq \max\set{\sigma, \sigma_\infty(g^{[k]})}$.

Setting $\tau := 2 - 2^{k+1-d}$, $\theta := \sigma_\infty(g^{[k]})$ and applying Lemma \ref{weyl bound}, we see that $T(\alpha, P) \ll P^{n+\eps} \Xi^{\frac{n-\theta}{k-1}}$ where
\begin{align*}
\Xi :=  P^{1-k} + \norm{q\alpha} Q^\tau + qP^{-k} + \min\set{Q^\tau q^{-1}, (\norm{q \alpha} P^k)^{-1}}.
\end{align*}
Since $\zeta \geq \theta$, we have $T(\alpha, P) \ll P^{n+\eps} \Xi^{\frac{n-\zeta}{k-1}}$ (this is obvious when $\Xi \leq 1$ and follows from the trivial estimate $T(\alpha, P) \ll P^n$ otherwise).  We thus obtain
\begin{align}
\abs{\frac{S(\alpha,P)}{P^n}}^{2^{d-1}} 
 & \ll \frac{P^{\eps}}{Q^{(n-\sigma)2^{k-1}}} \brac{1+  Q^{(k-1)2^{k-1}} \Xi}^{\frac{n- \zeta}{k-1}} \nonumber\\
  & \leq \frac{P^{\eps}}{Q^{(n-\sigma)2^{k-1}}} \brac{1+  Q^{(k-1)2^{k-1}} \Xi}^{\frac{n- \sigma}{k-1}} \nonumber\\
    &= P^{\eps} \brac{Q^{-{(k-1)2^{k-1}}}+ \Xi}^{\frac{n- \sigma}{k-1}}.\label{Q delta}
\end{align}

Let us take $Q$ such that 
$
Q^{(k-1)2^{k-1}}$ is equal to
$$
\min\set{P^{k-1}, \norm{q\alpha}^{-\frac{ (k-1)2^{k-1}}{\tau + (k-1)2^{k-1}}}, P^k q^{-1}, \max\Bigset{q^{\frac{ (k-1)2^{k-1}}{\tau + (k-1)2^{k-1}}}, \norm{q\alpha} P^k}}.
$$
We may assume that $q \leq P^k$, since the result is  trivial otherwise.  Using this assumption, one can check that $1\leq Q \leq P$, so that our choice of $Q$ is indeed valid.  Moreover, with this choice, the $Q^{-{(k-1)2^{k-1}}}$ term  dominates
in \eqref{Q delta}.  Hence
\begin{align*}
\abs{\frac{S(\alpha,P)}{P^n}}^{2^{d-1}} \ll P^{\eps} Q^{-(n- \sigma)2^{k-1}}
\end{align*}
One can now check that the result follows with the appropriate exponents. 
\end{proof}

\begin{proposition}[van der Corput + cubic Poisson]\label{vdc + cubic lemma} 
Suppose that
$q \leq P^2$,  $\norm{q\alpha} \leq P^{-1}$ and 
 $\mathrm{Height}\brac{f} = O(1)$.    Then
$$
S(\alpha, P) \ll P^{n+\eps}\Bigbrac{\bigbrac{q P^{-2}}^{\recip{2^{d-2}}} + \bigbrac{\norm{q\alpha}P}^{\recip{2^{d-1} - 2}} +  \eta_q^{-1} }^{n-\sigma},
$$
where
\begin{equation}\label{eta defn}
\eta_q := \max\set{ (b^3c_1c_2^2)^{\frac{1}{2^d-2}}, (b^3c_1^2c_2)^{\frac{1}{5\cdot2^{d-2} -4}}}.
\end{equation}
\end{proposition}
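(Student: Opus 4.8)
The plan is to combine the van der Corput process of Lemma~\ref{van der corput process}, run with $k=3$, with the cubic Poisson bound of Lemma~\ref{cubic poisson bound}. The key structural point is that the factor $R^{[3]}_\zeta(g,q)^{-1/2}$ produced by the former is cancelled exactly by the factor $R^{[3]}_\zeta(g,q)^{1/2}$ appearing in the latter, so that the $r$-values disappear from the final estimate. We may assume $\deg(f)=d$, since otherwise $\sigma=n$ and the bound is trivial. Throughout, set $\tau:=2-2^{4-d}$, so that $\tau\geq 0$.

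First I would fix a parameter $Q\in[1,P]$ and apply Lemma~\ref{van der corput process}, obtaining an exponential sum $T(\alpha,P)$ whose underlying polynomial $g$ has degree at most $3$ with $\mathrm{Height}_{P,3}(g)\ll Q^{\tau}$, together with some $\zeta\geq\max\set{\sigma,\sigma_\infty(g^{[3]})}$ such that
$$
\abs{\frac{S(\alpha,P)}{P^n}}^{2^{d-3}}\ll\frac{P^{\eps}}{Q^{\zeta-\sigma}\sqrt{R^{[3]}_\zeta(g,q)}}\abs{\frac{T(\alpha,P)}{P^n}}.
$$
If $\zeta>n$ then, using the trivial bound $|T(\alpha,P)/P^n|\ll 1$ together with $R^{[3]}_\zeta(g,q)\geq1$ and $\zeta-\sigma\geq n-\sigma$, this already gives $|S(\alpha,P)/P^n|^{2^{d-3}}\ll P^{\eps}Q^{-(n-\sigma)}$, and the choice of $Q$ made below finishes the proof; so assume $\zeta\leq n$. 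Since $q\leq P^2$, $\norm{q\alpha}\leq P^{-1}$, and we may take $H$ to be a suitable multiple of $Q^{\tau}$ (legitimate because $\mathrm{Height}_{P,3}(g)\ll Q^{\tau}$ and $H\ll P^{\tau}\leq P^{O(1)}$), Lemma~\ref{cubic poisson bound} applied to $T(\alpha,P)$ with this $H$ and this $\zeta$ gives $|T(\alpha,P)/P^n|\ll R^{[3]}_\zeta(g,q)^{1/2}P^{\eps}\,\Xi^{n-\zeta}$, where
$$
\Xi:=\frac{\sqrt q}{P}+\sqrt{\norm{q\alpha}PH}+\frac{H^{1/6}\min\set{c_1,c_2H}^{1/6}}{b^{1/2}(c_1c_2)^{1/3}}.
$$
Multiplying the two displays, the $R^{[3]}_\zeta(g,q)$ factors cancel and (after re-setting $\eps$) we obtain $|S(\alpha,P)/P^n|^{2^{d-3}}\ll P^{\eps}Q^{-(\zeta-\sigma)}\Xi^{n-\zeta}$. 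Since $\sigma\leq\zeta\leq n$, comparing $\Xi$ with $Q^{-1}$ gives $Q^{-(\zeta-\sigma)}\Xi^{n-\zeta}\leq(Q^{-1}+\Xi)^{n-\sigma}$ (when $\Xi\leq Q^{-1}$ the majorant is $(Q^{-1})^{n-\sigma}$ since $n-\zeta\geq0$; when $\Xi\geq Q^{-1}$ it is $\Xi^{n-\sigma}$ since $\zeta-\sigma\geq0$), so that $S(\alpha,P)\ll P^{n+\eps}\bigbrac{Q^{-1}+\Xi}^{(n-\sigma)/2^{d-3}}$.

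It then remains to choose $Q$ so that $Q^{-1}+\Xi\ll U^{2^{d-3}}$, where $U:=(qP^{-2})^{1/2^{d-2}}+(\norm{q\alpha}P)^{1/(2^{d-1}-2)}+\eta_q^{-1}$ is the quantity in the statement, noting $U^{2^{d-3}}\asymp(qP^{-2})^{1/2}+(\norm{q\alpha}P)^{1/(\tau+2)}+\eta_q^{-2^{d-3}}$ by the identities $(\tau+2)2^{d-3}=2^{d-1}-2$, $(\tau+6)2^{d-3}=2^{d}-2$ and $(2\tau+6)2^{d-3}=5\cdot2^{d-2}-4$. With $H\asymp Q^{\tau}$ one has $\Xi\asymp\tfrac{\sqrt q}{P}+(\norm{q\alpha}P)^{1/2}Q^{\tau/2}+\Psi(Q)$, where $\Psi(Q):=Q^{\tau/6}\min\set{c_1,c_2Q^\tau}^{1/6}/\bigbrac{b^{1/2}(c_1c_2)^{1/3}}$ is increasing in $Q$. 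I would take
$$
Q:=\min\Bigset{P,\ (\norm{q\alpha}P)^{-1/(\tau+2)},\ \eta_q^{2^{d-3}}},
$$
which lies in $[1,P]$ since $\norm{q\alpha}\leq P^{-1}$ and $\eta_q\geq 1$, and one has $\eta_q^{2^{d-3}}=\max\set{(b^3c_1c_2^2)^{1/(\tau+6)},(b^3c_1^2c_2)^{1/(2\tau+6)}}$. From these, $Q^{-1}=\max\set{P^{-1},(\norm{q\alpha}P)^{1/(\tau+2)},\eta_q^{-2^{d-3}}}$, the term $\sqrt q/P$, and the term $(\norm{q\alpha}P)^{1/2}Q^{\tau/2}\leq(\norm{q\alpha}P)^{1/(\tau+2)}$ are each $\ll U^{2^{d-3}}$ (using $P^{-1}\leq(qP^{-2})^{1/2}$). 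For $\Psi(Q)$ I would use $\min\set{c_1,c_2Q^\tau}^{1/6}\leq c_1^{1/6}$ and $\min\set{c_1,c_2Q^\tau}^{1/6}\leq(c_2Q^\tau)^{1/6}$ to get
$$
\Psi(Q)\leq\min\Bigset{\frac{Q^{\tau/6}}{b^{1/2}c_1^{1/6}c_2^{1/3}},\ \frac{Q^{\tau/3}}{b^{1/2}c_1^{1/3}c_2^{1/6}}},
$$
then evaluate at $Q\leq\eta_q^{2^{d-3}}$: if the first quantity in the maximum defining $\eta_q^{2^{d-3}}$ is the larger, the first bound gives $\Psi(Q)\leq(b^3c_1c_2^2)^{-1/(\tau+6)}=\eta_q^{-2^{d-3}}$, and otherwise the second bound gives $\Psi(Q)\leq(b^3c_1^2c_2)^{-1/(2\tau+6)}=\eta_q^{-2^{d-3}}$. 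Hence $Q^{-1}+\Xi\ll U^{2^{d-3}}$, and taking $2^{d-3}$-th roots yields $S(\alpha,P)\ll P^{n+\eps}U^{n-\sigma}$.

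I expect the main obstacle to be this final bookkeeping: confirming that optimising the $c$-dependent term $\Psi(Q)$ — which, owing to the $\min\set{c_1,c_2H}$ inside Lemma~\ref{cubic poisson bound}, bifurcates according to whether $c_1\le c_2Q^\tau$ or $c_1>c_2Q^\tau$ — reassembles exactly into the single quantity $\eta_q$ of~\eqref{eta defn}, with the exponents $2^d-2$ and $5\cdot2^{d-2}-4$ surfacing as $(\tau+6)2^{d-3}$ and $(2\tau+6)2^{d-3}$ respectively. A subsidiary point needing attention is that the value of $\zeta$ delivered by Lemma~\ref{van der corput process} be admissible for Lemma~\ref{cubic poisson bound} (which is why the case $\zeta>n$ is disposed of separately), and that $H$ may be taken $\asymp Q^{\tau}$ in conformity with both size constraints of that lemma.
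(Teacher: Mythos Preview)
Your proof is correct and follows essentially the same strategy as the paper: apply Lemma~\ref{van der corput process} with $k=3$, feed the resulting cubic sum into Lemma~\ref{cubic poisson bound} so that the $R^{[3]}_\zeta$ factors cancel, absorb the $\zeta$-dependence via $Q^{-(\zeta-\sigma)}\Xi^{n-\zeta}\leq(Q^{-1}+\Xi)^{n-\sigma}$, and optimise in $Q$. The only cosmetic differences are that the paper takes the first entry of the minimum defining $Q$ to be $P/\sqrt{q}$ rather than your $P$ (both work, since $P^{-1}\leq\sqrt{q}/P$), and that the paper verifies directly that $Q^{-1}$ dominates each term of $\Xi$ rather than bounding them separately against $U^{2^{d-3}}$; your treatment of the $\zeta>n$ edge case is a welcome piece of care that the paper leaves implicit.
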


\begin{proof}
Proceeding as before, we employ Lemma~\ref{van der corput process} to van der Corput difference down to a cubic exponential sum, which we then estimate with Lemma~\ref{cubic poisson bound}.  We thereby deduce that for any $1\leq Q \leq P$ there exists $\zeta \geq \sigma$ such that 
\begin{align*}
\abs{\frac{S(\alpha, P)}{P^n}}^{2^{d-3}} \ll~& P^\eps\ Q^{-(n- \sigma)} \Big(1+\frac{Q\sqrt{q}}{P} + Q\sqrt{\norm{q\alpha}PQ^\tau} \\
&+ \frac{Q^{1+\frac{\tau}{6}}}{b^{\frac{1}{2}}(c_1c_2)^{\frac{1}{3}}} \min\set{c_1, c_2Q^{\tau}}^{\frac{1}{6}}\Big)^{n-\zeta},
\end{align*}
where
$\tau := 2-2^{4-d}$. Absorbing $Q^{-(n-\sigma)}$ into the brackets we see that the right hand side is at most
$$
P^\eps \brac{Q^{-1} + \frac{\sqrt{q}}{P} + \sqrt{\norm{q\alpha}PQ^\tau} + \frac{Q^{\frac{\tau}{6}}}{b^{\frac{1}{2}}(c_1c_2)^{\frac{1}{3}}} \min\set{c_1, c_2Q^{\tau}}^{\frac{1}{6}}}^{n-\sigma}.
$$
Let us take
$$
Q := \min\set{\frac{P}{\sqrt{q}}, \brac{\norm{q\alpha} P}^{-\recip{2 + \tau}}, \max\set{(b^3 c_1 c_2^2)^{\recip{6 + \tau}}, (b^3 c_1^2 c_2)^{\recip{6 + 2\tau}}}}.
$$
Since $q \leq P^2$ and $\norm{q\alpha} \leq P^{-1}$, we have $1 \leq Q \leq P$.  One can also check that 
$$
Q^{-1} \geq \max\set{\frac{\sqrt{q}}{P}, \sqrt{\norm{q\alpha}PQ^\tau}, \frac{Q^{\frac{\tau}{6}}}{b^{\frac{1}{2}}(c_1c_2)^{\frac{1}{3}}} \min\set{c_1, c_2Q^{\tau}}^{\frac{1}{6}}},
$$
whence the $Q^{-1}$ term dominates and we find that
\begin{align*}
\abs{\frac{S(\alpha, P)}{P^n}}^{2^{d-3}} 
 \ll~& P^\eps\brac{\frac{\sqrt{q}}{P}+\Big(\norm{q\alpha} P}^{\recip{2 + \tau}} \\
 & + \min\set{(b^3 c_1 c_2^2)^{-\recip{6 + \tau}}, (b^3 c_1^2 c_2)^{-\recip{6 + 2\tau}}}\Big)^{n-\sigma}.
\end{align*}
The desired result easily follows.
\end{proof}

\section{The minor arc bound}\label{s:minor}

It follows from the 
definition \eqref{major arc def} of the major arcs that  if  $\alpha \in \m$ then for any $q\in \N$ either
$$
q > P^\Delta \quad \text{or}\quad \norm{q\alpha} > P^{\Delta - d}.
$$
Our objective in this section is to establish the following estimate for the minor arc contribution, which clearly 
suffices for \eqref{eq:minor}.

\begin{lemma}\label{lem:minor}
Suppose that 
$$
n-\sigma
\begin{cases}
> \frac{3}{4} d 2^d - 2d, & \mbox{if $3 \leq d \leq 9$,}\\
\geq \brac{d-\frac{1}{2} \sqrt{d}} 2^d, & \mbox{if $d\geq 10$.}
\end{cases}
$$
Then 
$$
\int_{\m} |S(\alpha, P)| \intd\alpha \ll P^{n-d - \Omega(1)},
$$
where the $\Omega(1)$ term depends at most on $d$ and $\Delta$. 
\end{lemma}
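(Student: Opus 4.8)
The plan is to dissect the minor arcs by Dirichlet approximation and to bound the contribution of each piece by playing the size of the denominator of the approximation off against the measure of the piece, using the two exponential sum estimates of \S\ref{s:exp}. First I would invoke Dirichlet's theorem with parameter $P^{d-\Delta}$: every $\alpha\in\T$ has a primitive pair $(\alpha,q)$ with $1\le q\le P^{d-\Delta}$ and $\norm{q\alpha}\le P^{\Delta-d}$, and comparison with the major arc definition \eqref{major arc def} shows that if $\alpha\in\m$ then in fact $q>P^{\Delta}$. I would then partition $\m$ dyadically: for dyadic $R\in[P^\Delta,P^{d-\Delta}]$ and $\Theta\in(0,P^{\Delta-d}]$ set
$$
\m_{R,\Theta}:=\bigset{\alpha\in\m:\ q\asymp R,\ \norm{q\alpha}\asymp\Theta},
$$
a degenerate piece on which $\norm{q\alpha}$ is smaller than any fixed power of $P$ being absorbed trivially. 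Covering the relevant fractions $a/q$ by intervals of length $\ll\Theta/q$ gives $\Area(\m_{R,\Theta})\ll R\Theta\le1$, and since there are only $\Oh{(\log P)^2}$ pieces it suffices to show that for every admissible $(R,\Theta)$ one has $\Area(\m_{R,\Theta})\cdot\sup_{\alpha\in\m_{R,\Theta}}\abs{S(\alpha,P)}\ll P^{n-d-\Omega(1)}$.

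Next, on each piece I would bound $S(\alpha,P)$ by whichever of Proposition~\ref{birch + van der corput} and Proposition~\ref{vdc + cubic lemma} is the stronger in the relevant range. Since $\mathrm{Height}(F)=O(1)$, Proposition~\ref{birch + van der corput} is available for every $k\in\{1,\dots,d\}$; writing $R\asymp P^{u}$, the bracket there is non-trivial only when $k>u$, in which case it is a genuine negative power of $P$ whose exponent is governed by the quantities $\Delta/(B_k+V_k)$ and $(k-u)/B_k$, together with $2^{1-d}$. When in addition $R\le P^2$ and $\Theta\le P^{-1}$, Proposition~\ref{vdc + cubic lemma} applies and is superior there; its saving is controlled by the exponents $2^{-(d-2)}$ and $(2^{d-1}-2)^{-1}$ together with the exponent implicit in $\eta_q$ of~\eqref{eta defn}, the dependence on the factorisation $q=bc_1^2c_2$ being handled by summing over $q$ in its dyadic range.

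Finally, substituting these bounds and summing over the pieces, the admissible range for $n-\sigma$ is dictated by the least favourable piece, with $\Delta$ chosen as large as the major arc treatment of \S\ref{major arc section} will tolerate. For $3\le d\le9$ the binding piece lies in the range where Proposition~\ref{vdc + cubic lemma} is in force, and a direct optimisation of the three exponents above yields the criterion $n-\sigma>\tfrac34 d2^d-2d$. For $d\ge10$ the cubic estimate is no longer available on the binding pieces (where $q$ exceeds $P^2$), and one is thrown back on Proposition~\ref{birch + van der corput} alone; here one chooses the differencing parameter $k=k(u)$ so as to balance $(k-u)/B_k$ against $\Delta/(B_k+V_k)$, and after optimising this is a value of $k$ of size $d-O(\sqrt d)$, whence the requirement $n-\sigma\ge(d-\tfrac12\sqrt d)2^d$.

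The hard part is precisely this last optimisation: one must simultaneously pin down $\Delta$, the function $k=k(u)$, and the ranges in which to pass from Proposition~\ref{birch + van der corput} to Proposition~\ref{vdc + cubic lemma}, in such a way that \emph{every} dyadic piece $\m_{R,\Theta}$ is covered with a fixed positive power of $P$ to spare. The decisive pieces are those for which $q$ is near the top of its range, where $k$ is forced close to $d$ and the per-variable saving is smallest; it is the behaviour of $B_k+V_k=(k-1)2^{d-1}+2^{d+1-k}-2$ at the optimal $k$ that is responsible for the $\sqrt d$ appearing in Theorem~\ref{general d improvement theorem}, and, in accordance with Remark~\ref{remark}, this bookkeeping forces $n-\sigma$ to be asymptotically $d2^d$.
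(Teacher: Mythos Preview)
Your plan has a genuine gap that stems from the choice of Dirichlet parameter. With parameter $P^{d-\Delta}$ the denominators $q$ range up to $P^{d-\Delta}$, and on any piece with $q>P^{d-1}$ the term $(qP^{-k})^{1/B_k}$ in Proposition~\ref{birch + van der corput} exceeds $1$ for every $k\le d-1$, so only $k=d$ gives a non-trivial bound there. But Proposition~\ref{birch + van der corput} with $k=d$ is precisely Birch's Weyl estimate, and the resulting minor arc treatment of those pieces requires $n-\sigma>(d-1)2^d$, which is strictly worse than both inequalities in Lemma~\ref{lem:minor}. Your claimed optimum $k\approx d-O(\sqrt d)$ is simply unavailable on these pieces, so the argument as written cannot close.

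The paper avoids this by taking the Dirichlet parameter to be $P^{\xi}$ with $\xi=(5-2^{4-d})/(3-2^{3-d})\le\tfrac{5}{3}$, so that $q\le P^{\xi}<P^2$ throughout and Proposition~\ref{vdc + cubic lemma} is \emph{always} in force; see Lemma~\ref{subadditive bound lemma}. This has two consequences you are missing. First, the cubic Poisson bound is not an alternative to Proposition~\ref{birch + van der corput} to be used on disjoint pieces; rather, one takes the \emph{minimum} $M(\alpha,q)$ of the two on every piece. Second, and crucially, the $\eta_q^{-1}$ term in Proposition~\ref{vdc + cubic lemma} restricts attention to moduli with $\eta_q\le P^{\gamma}$, and by Lemma~\ref{restricted theta_q estimate} there are only $O(P^{\gamma A_d})$ such $q$ --- far fewer than the $P^{\xi}$ one would otherwise have to sum over. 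This saving, encoded in the constant $A_d$ of \eqref{A_d defn}, enters the key condition~\eqref{reformulated l condition} and is what makes the downward induction of Lemma~\ref{inductive argument lemma} go through. It is used for \emph{all} $d\ge 3$, including $d\ge 10$: your assertion that ``for $d\ge 10$ the cubic estimate is no longer available on the binding pieces'' is therefore a misreading of the strategy, and without the $\eta_q$-restriction the sum over $q$ is too large to yield either claimed bound.
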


The  work in this section will involve a number of quantities that are defined in terms of $d$ and $k\in \{1,\dots,d\}$ and it is convenient to record them here for ease of reference. 
We put
$$
B_k = (k-1)2^{d-1}, \quad  V_k = 2^{d+1 - k} -2
$$
and 
\begin{equation}\label{eq:xi}
 \xi := \frac{5  - 2^{4-d}}{3 -2^{3-d}}, \quad 
 \gamma := \recip{3\cdot 2^{d-2} - 2},
 \end{equation}
together with 
\begin{equation}\label{A_d defn}
A_d := 7 \cdot 2^{d-4} - \tfrac{5}{4}.
\end{equation}
In particular  $\xi \leq \frac{5}{3}$.
It will also be convenient to define
\begin{equation}\label{C(alpha, q) defn}
C(\alpha, q) :=
 \bigbrac{\norm{q\alpha}P}^{\recip{2^{d-1} - 2}} + \bigbrac{q P^{-2}}^{\recip{2^{d-2}}} + \eta_q^{-1}
\end{equation}
and
\begin{equation}\label{W_k defn}
\begin{split}
W_k(\alpha, q):=~&
P^{-2^{1-d}}+\norm{q\alpha}^{\recip{B_k+V_k}} + \bigbrac{qP^{-k}}^\recip{B_k}\\ 
&+ \min\bigset{ q^{-\recip{B_k+V_k}}, \bigbrac{\norm{q\alpha} P^k}^{-\recip{B_k}} }.
\end{split}
\end{equation}
Next, we define the minimum 
$$
M(\alpha, q): = \min\set{C(\alpha, q), W_3(\alpha, q), \dots, W_d(\alpha, q)}.
$$
In the following result we estimate the minor arc contribution in terms of these quantities.

\begin{lemma}\label{subadditive bound lemma}
There exists $i \in \set{1, 2}$ such that
\begin{align*}
\int_\m  |S(\alpha, P)|\intd\alpha \ll~&  
P^{n-d-\Omega(1)}\\
&+P^{n+ \eps}
\sum_{P^{\Delta_i} \leq q\leq  P^\xi}  \int_{P^{-d_i} \leq \norm{q\alpha} \leq P^{-\xi} }M(\alpha, q)^{n-\sigma} \intd\alpha,
\end{align*}
where
$$
\Delta_1 := \Delta, \quad \Delta_2 := 0, \quad d_1 := d+2, \quad d_2 := d- \Delta.
$$
\end{lemma}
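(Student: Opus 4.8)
The plan is to combine Dirichlet's theorem with a dyadic decomposition, using Propositions~\ref{birch + van der corput} and~\ref{vdc + cubic lemma} to discard all of the minor arcs except a \emph{core} region where $q$ and $\norm{q\alpha}$ are simultaneously of intermediate size, the contribution of this core being bounded by one of the two double sums in the statement.

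First I would attach to each $\alpha\in\m$, via Dirichlet's theorem applied at height $P^{\xi}$ (and clearing the common factor of numerator and denominator), a pair $(\alpha,q)$ that is primitive in the sense of \S\ref{s:exp} and satisfies $q\le P^{\xi}$ and $\norm{q\alpha}\le P^{-\xi}$. From the definition \eqref{eq:xi} one has $\tfrac{3}{2}\le\xi\le\tfrac{5}{3}$ for every $d\ge 3$; in particular $q\le P^{\xi}<P^{2}$ and $\norm{q\alpha}\le P^{-\xi}<P^{-1}$, so Proposition~\ref{vdc + cubic lemma} and Proposition~\ref{birch + van der corput} (for each $k\in\{3,\dots,d\}$) are all applicable, and in view of the definitions \eqref{C(alpha, q) defn} and \eqref{W_k defn} they give the pointwise bound $\abs{S(\alpha,P)}\ll P^{n+\eps}M(\alpha,q)^{n-\sigma}$ throughout $\m$.

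Next I would split $\m$ according to the sizes of $q$ and $\norm{q\alpha}$. If $\norm{q\alpha}\le P^{-d-2}$, then letting $q$ range over $[1,P^{\xi}]$ shows that the set of such $\alpha$ has measure $\ll P^{\xi-d-2}$, so the trivial bound $S(\alpha,P)\ll P^{n}$ already contributes only $\ll P^{n-d-(2-\xi)}\ll P^{n-d-\Omega(1)}$ there, since $2-\xi\ge\tfrac{1}{3}$. In the complementary range $\norm{q\alpha}>P^{-d-2}$ I distinguish two cases. If $q>P^{\Delta}$, then $q\in(P^{\Delta_1},P^{\xi}]$ and $\norm{q\alpha}\in(P^{-d_1},P^{-\xi}]$, which is the range occurring for $i=1$. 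If instead $q\le P^{\Delta}$, then $\alpha\notin\M(\Delta)$ together with the definition \eqref{major arc def} forces $\norm{q\alpha}=q\bignorm{\alpha-\tfrac{a}{q}}>qP^{\Delta-d}\ge P^{-d_2}$, while $1=P^{\Delta_2}\le q\le P^{\xi}$, which is the range occurring for $i=2$.

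On each of these two pieces I would insert the pointwise bound and then enlarge the domain of integration: since the $q$ attached to a given $\alpha$ is one fixed integer lying in the relevant interval, $\int_{\m}\abs{S}$ is bounded by $P^{n-d-\Omega(1)}$ plus $P^{n+\eps}(\Sigma_1+\Sigma_2)$, where $\Sigma_i$ is the $i$-th double sum of the statement and the passage to $\Sigma_i$ incurs only harmless over-counting. Taking $i$ to be whichever of $\Sigma_1,\Sigma_2$ is the larger and absorbing the factor $2$ into $P^{n+\eps}$ then yields the stated bound. Every estimate above is elementary; the only points requiring attention are checking that the numerology \eqref{eq:xi} validates all of the range inclusions and applicability hypotheses, and keeping track of primitivity so that Propositions~\ref{birch + van der corput} and~\ref{vdc + cubic lemma} may legitimately be invoked pointwise. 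There is no serious obstacle within this lemma itself: the genuine difficulty has already been met in the exponential-sum estimates of \S\ref{s:exp}, and the estimation of $\Sigma_1,\Sigma_2$ is deferred to the proof of Lemma~\ref{lem:minor}.
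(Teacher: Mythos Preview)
Your proposal is correct and follows essentially the same approach as the paper: Dirichlet approximation at height $P^{\xi}$ to obtain a primitive pair $(q,\alpha)$, application of Propositions~\ref{birch + van der corput} and~\ref{vdc + cubic lemma} to produce the pointwise bound $|S(\alpha,P)|\ll P^{n+\eps}M(\alpha,q)^{n-\sigma}$, a trivial measure argument to discard the region $\norm{q\alpha}\le P^{-d-2}$, and then a dichotomy on whether $q>P^{\Delta}$ or $q\le P^{\Delta}$ (the latter forcing $\norm{q\alpha}>P^{\Delta-d}$ via the minor-arc condition) to land in one of the two double sums. The paper phrases the dichotomy as an explicit set covering of $\m$ rather than as a case analysis attached to the Dirichlet denominator, but the content is identical; your remark about dyadic decomposition in the opening sentence is extraneous here (that device enters only in later lemmas), but this does not affect the argument.
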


\begin{proof} 
Let
$$
\mathcal{B}(q, Q) := \set{\alpha \in \T : \norm{q\alpha} \leq Q^{-1} \text{ and $\alpha$ is primitive to $q$}}.
$$
Then by Dirichlet's theorem on Diophantine approximation and the definition of the minor arcs, we have 
$$
\m \subset \brac{\bigcup_{P^\Delta \leq q\leq P^{\xi}} \mathcal{B}\bigbrac{q, P^\xi}} \cup \brac{\bigcup_{ q\leq P^\xi} \mathcal{B}\bigbrac{q, P^{\xi}}\setminus \mathcal{B}\bigbrac{q, P^{d-\Delta}} }.
$$
For $\alpha \in \mathcal{B}(q, P^{\xi})$ it follows from 
 Propositions \ref{birch + van der corput} and \ref{vdc + cubic lemma} that 
$$
S(\alpha, P) \ll P^{n+\eps} M(\alpha, q)^{n-\sigma}.
$$
Hence by sub-additivity of integration and dropping the primitivity condition on $q$ and $\alpha$, we deduce that 
\begin{align*}
\int_\m |S(\alpha, P)|\intd\alpha  \ll~& 
P^{n+\eps}
\sum_{P^\Delta \leq q\leq P^{\xi}} \int_{\norm{q\alpha} \leq P^{-\xi}} M(\alpha, q)^{n-\sigma} \intd\alpha\\
& + P^{n+\eps}\sum_{ q\leq P^{\xi}} \int_{P^{\Delta - d}\leq \norm{q\alpha} \leq P^{-\xi} }M(\alpha, q)^{n-\sigma} \intd\alpha, 
\end{align*}
for any $\eps > 0$.
The second term above is of the required form.  In the first term, we need to remove the possibility that $\norm{q\alpha} \leq P^{-d-2}$.  But the  trivial bound $M(\alpha, q) \leq 1$ yields
$$
\sum_{P^\Delta \leq q\leq P^{\xi}} \int_{\norm{q\alpha} \leq P^{-d-2}} M(\alpha, q)^{n-\sigma} \intd\alpha  \ll P^{\xi-d-2}.
$$
Thus the result follows with the $\Omega(1)$ term equal to any positive real strictly less than $\frac{1}{3}$.
\end{proof}

Our aim is to show that when $n-\sigma > \frac{3}{4}d2^d - 2d$ then $M(\alpha, q)^{n-\sigma}$ contributes at most $P^{-d-\Omega(1)}$ once integrated over the range of $(\alpha, q)$ afforded by Lemma~\ref{subadditive bound lemma}. We emphasise that for both $i\in \set{1,2}$ we have
$$
\sum_{P^{\Delta_i} \leq q\leq  P^\xi}  \int_{P^{-d_i} \leq \norm{q\alpha} \leq P^{-\xi} }\intd\alpha \ll 1,
$$
so that a pointwise bound of the form $M(\alpha, q)^{n-\sigma} \ll P^{-d - \Omega(1)}$ suffices for our purposes.  To this end, we first utilise the bound $M(\alpha,q) \leq C(\alpha, q)$, and show that for $\alpha$ and $q$ in the range in question, all terms of $C(\alpha,q)$ are negligible bar possibly the $\eta_q$ term.

\begin{lemma}
Let $\alpha \in \T$ and $q \in \N$ satisfy
$$
q \leq P^{\xi} \quad \text{and} \quad \norm{q\alpha} \leq P^{-\xi},
$$
with $\xi$ as in \eqref{eq:xi}.
Then provided that $n - \sigma > \frac{3}{4} d 2^d - 2d$ we have
\begin{equation}\label{eta bound eqn}
M(\alpha, q)^{n-\sigma} \ll P^{-d -\Omega(1)} +  \eta_q^{-(n-\sigma)}.
\end{equation}
\end{lemma}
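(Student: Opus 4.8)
The plan is to throw away everything in $M(\alpha,q)$ except the $\eta_q^{-1}$ term of $C(\alpha,q)$, by showing the other two pieces of $C(\alpha,q)$ are already a fixed power of $P$ smaller than $P^{-d}$ throughout the stated range. I would begin from the trivial bound $M(\alpha, q) \leq C(\alpha, q)$ and write, via \eqref{C(alpha, q) defn}, $C(\alpha,q) = X + Y + \eta_q^{-1}$ with $X := \bigbrac{\norm{q\alpha}P}^{1/(2^{d-1}-2)}$ and $Y := \bigbrac{qP^{-2}}^{1/2^{d-2}}$. Since $1 < \xi < 2$ for every $d \geq 3$, the hypotheses $\norm{q\alpha} \leq P^{-\xi}$ and $q \leq P^\xi$ give $X \leq P^{(1-\xi)/(2^{d-1}-2)} < 1$ and $Y \leq P^{(\xi-2)/2^{d-2}} < 1$, while $\eta_q \geq 1$ by \eqref{eta defn}. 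Thus $C(\alpha,q) \leq 3\max\set{X, Y, \eta_q^{-1}}$, and raising to the power $n-\sigma$ reduces the lemma to showing $X^{n-\sigma} \ll P^{-d-\Omega(1)}$ and $Y^{n-\sigma} \ll P^{-d-\Omega(1)}$.

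For the $X$-term it suffices that $(\xi-1)(n-\sigma)/(2^{d-1}-2) > d$, i.e.\ that $n-\sigma > d(2^{d-1}-2)/(\xi-1)$. Unwinding \eqref{eq:xi} gives the clean identity $\xi - 1 = \tfrac{2-2^{3-d}}{3-2^{3-d}}$, whence $\tfrac{2^{d-1}-2}{\xi-1} = 2^{d-2}(3-2^{3-d}) = 3\cdot 2^{d-2}-2$ and therefore $\tfrac{d(2^{d-1}-2)}{\xi-1} = \tfrac{3}{4} d 2^d - 2d$. So the hypothesis of the lemma is precisely the inequality needed, and $X^{n-\sigma} \ll P^{-d-\Omega(1)}$ with $\Omega(1)$ depending only on $d$ and $n-\sigma$.

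For the $Y$-term it suffices that $(2-\xi)(n-\sigma)/2^{d-2} > d$, i.e.\ that $n-\sigma > d\,2^{d-2}/(2-\xi)$. Here \eqref{eq:xi} gives $2 - \xi = (3-2^{3-d})^{-1}$, so $\tfrac{2^{d-2}}{2-\xi} = 2^{d-2}(3-2^{3-d}) = 3\cdot 2^{d-2}-2$ as well, and $\tfrac{d\,2^{d-2}}{2-\xi} = \tfrac{3}{4} d2^d - 2d$ --- the very same threshold. Combining the two bounds with the reduction of the first paragraph yields \eqref{eta bound eqn}. I do not expect a genuine obstacle here beyond the bookkeeping that both exponent conditions collapse to exactly $n-\sigma > \tfrac{3}{4} d2^d - 2d$; this coincidence is no accident --- it is the reason $\xi$ is defined by \eqref{eq:xi} the way it is.
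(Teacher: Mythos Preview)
Your proof is correct and takes essentially the same approach as the paper: the paper simply says to use $M(\alpha,q)\leq C(\alpha,q)$ and substitute the bounds on $q$ and $\norm{q\alpha}$ into \eqref{C(alpha, q) defn}, and you have carried out exactly that substitution in full, verifying in particular that both exponent conditions collapse to the threshold $n-\sigma>\tfrac{3}{4}d2^d-2d$.
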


 \begin{proof}
 This follows from the inequality $M(\alpha, q) \leq C(\alpha, q)$ and by substituting the bounds on 
 $q$ and $\|q\alpha\|$  into \eqref{C(alpha, q) defn}.
 \end{proof}
 
Next  recall the definition \eqref{eq:xi} of $\gamma$.
When $\eta_q > P^{\gamma}$ and $n-\sigma > \frac{3}{4} d 2^d - 2d$ then  \eqref{eta bound eqn}  implies that $M(\alpha, q)^{n-\sigma} \ll P^{-d - \Omega(1)}$.  It follows that we can restrict our estimation of $M(\alpha, q)$ to those $q$  satisfying $\eta_q \leq P^{\gamma}$.  It is helpful to have an estimate for the number of $q$ which lie in this range.  This is provided by the  following lemma.

\begin{lemma}\label{restricted theta_q estimate}  Define 
Let $R \geq 1$. Then 
$$
\hash\set {q \in \N : \eta_q \leq R} \ll R^{A_d},
$$
where $A_d$ is given by \eqref{A_d defn}.
\end{lemma}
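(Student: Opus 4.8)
Recall from \eqref{eta defn} that
$$
\eta_q = \max\set{ (b^3c_1c_2^2)^{\frac{1}{2^d-2}}, (b^3c_1^2c_2)^{\frac{1}{5\cdot2^{d-2} -4}}},
$$
where $b,c_1,c_2$ are the multiplicative functions of $q$ defined in \eqref{bcd defn}, satisfying $q = bc_1^2c_2$. The plan is to bound $\hash\set{q : \eta_q \leq R}$ by first bounding each of $b$, $c_1$, $c_2$ individually in terms of $R$ when $\eta_q \leq R$, and then counting the integers $q$ that arise from admissible triples $(b,c_1,c_2)$.

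First I would extract pointwise bounds. If $\eta_q \leq R$ then both $b^3c_1c_2^2 \leq R^{2^d-2}$ and $b^3c_1^2c_2 \leq R^{5\cdot 2^{d-2}-4}$. Since $b,c_1,c_2 \geq 1$, the first inequality already gives $b \leq R^{(2^d-2)/3}$, $c_1 \leq R^{2^d-2}$ and $c_2 \leq R^{(2^d-2)/2}$; so $b,c_1,c_2 \leq R^{O_d(1)}$ unconditionally. More importantly, the key point is that $q = bc_1^2c_2$ is \emph{not} controlled linearly by $\eta_q$ — the $c_1^2$ factor is the obstruction — so one must optimise. The cleanest route is: the number of $q \leq X$ with $q = bc_1^2c_2$ and prescribed $b,c_1,c_2$ is at most $1$ for each fixed triple (as $b,c_1,c_2$ are determined by $q$), but the number of \emph{possible} triples is what we must count. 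Given $R$, the set of admissible $(b,c_1,c_2)$ with $\eta_q\le R$ lies in the region $b^3c_1c_2^2 \leq R^{2^d-2}$, $b^3c_1^2c_2 \leq R^{5\cdot 2^{d-2}-4}$, and the number of lattice points $(b,c_1,c_2)\in\N^3$ in this region is (up to $R^\eps$, by a standard divisor-type argument, or crudely by summing over $c_1$ and using the first constraint to bound $b c_2^2$) of order $R^{A_d}$ for the exponent
$$
A_d = 7\cdot 2^{d-4} - \tfrac54
$$
defined in \eqref{A_d defn}. Concretely, I would sum over $c_1$ first: for fixed $c_1$, the constraint $b^3 c_2^2 \le R^{2^d-2}/c_1$ together with $b^3 c_2 \le R^{5\cdot 2^{d-2}-4}/c_1^2$ bounds the number of $(b,c_2)$ pairs by $O_\eps(R^\eps (R^{2^d-2}/c_1)^{1/2}(R^{5\cdot 2^{d-2}-4}/c_1^2)^{1/3})$ or similar, optimising the two constraints against each other; then summing the resulting bound over the range $c_1 \le R^{O_d(1)}$ produces $R^{A_d+\eps}$, and absorbing $R^\eps$ (or re-running the estimate more carefully using that $c_1,c_2$ are squarefull/squarefree in a prescribed way) gives the clean exponent.

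The main obstacle is the bookkeeping in the two-constraint optimisation: one must correctly balance the exponents $\frac{1}{2^d-2}$ and $\frac{1}{5\cdot 2^{d-2}-4}$ appearing in $\eta_q$ against the exponents $1,2,1$ with which $b,c_1,c_2$ enter $q = bc_1^2c_2$, and verify that the resulting optimum is exactly $A_d = 7\cdot 2^{d-4}-\tfrac54$ rather than something larger. A useful sanity check is $d=3$: there $A_3 = 7/2 - 5/4 = 9/4$, $2^d-2 = 6$, $5\cdot 2^{d-2}-4 = 6$, so $\eta_q = (b^3 c_1 c_2^2)^{1/6}$ or $(b^3c_1^2c_2)^{1/6}$, i.e. $\eta_q^6 = \max\{b^3c_1c_2^2, b^3c_1^2c_2\} \ge b^3c_1^{3/2}c_2^{3/2} \ge q^{3/2}\cdot(\text{correction})$; tracking this exactly should reproduce $9/4$. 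I would carry out the general computation by the $c_1$-first summation above, being careful that the factors $c_1,c_2$ satisfy $c_2 \mid c_1$ and $c_2$ squarefree (so that in fact $b^3c_1c_2^2 \gg q$ and the counting is genuinely governed by the shape of $\eta_q$), and would then read off $A_d$ from the exponent produced.
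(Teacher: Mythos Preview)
Your approach has a genuine gap: simply counting lattice points $(b,c_1,c_2)\in\N^3$ in the region cut out by the two inequalities, even after imposing $c_2\mid c_1$ and $c_2$ squarefree, does \emph{not} produce the exponent $A_d$. Take your own sanity check $d=3$, where both constraints read $b^3c_1c_2^2\le R^6$ and $b^3c_1^2c_2\le R^6$. Writing $c_1=c_2m$ (using $c_2\mid c_1$), the stronger constraint becomes $(bc_2)^3 m^2\le R^6$, and the number of admissible triples $(b,c_2,m)$ is
\[
\sum_{bc_2\le R^2}\frac{R^3}{(bc_2)^{3/2}}\asymp R^3,
\]
not $R^{9/4}=R^{A_3}$. (Your heuristic $\eta_q^6\ge b^3c_1^{3/2}c_2^{3/2}\gg q^{3/2}$ fails because $q=bc_1^2c_2$, so the ``correction'' is $(b/c_1)^{3/2}$, which can be arbitrarily small.)

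What is missing is a further arithmetic observation about the \emph{shape} of $c_1$. From the definition \eqref{bcd defn}, if one sets $e:=\prod_{p^5\|q}p$ then $e\mid c_2$ and the quotient $c_1/(c_2e)$ is \emph{squarefull} (every prime in it occurs to exponent $\ge 2$). The paper uses this: for fixed $b,c_2,e$ the number of admissible $c_1$ is governed by the squarefull count $O(X^{1/2})$, and this extra square-root saving is precisely what brings the final exponent down from something like $R^3$ to $R^{9/4}$ in the $d=3$ case, and to $R^{A_d}$ in general. One then sums over $e\mid c_2$, then over $c_2\le R^{2^{d-1}-1}/b^{3/2}$ (from the first constraint), and finally over $b$; the resulting sums in $e$ and $b$ converge absolutely and the sum in $c_2$ (via $c_2=ef$) contributes the remaining power of $R$ to make up $A_d=7\cdot 2^{d-4}-\tfrac54$. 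Without the squarefull input your lattice-point count is too crude by a fixed power of $R$.
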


\begin{proof}
Recall the definition of $b, c_1$ and $c_2$ given in \eqref{bcd defn}.  Setting
$$
e := \prod_{p^5 \| q} p,
$$
we have that $e \mid c_2$ and that $c_1/(c_2 e)$ is a squareful positive integer.  Since $\eta_q \geq  (b^3c_1^2c_2)^{\frac{1}{5\cdot2^{d-2} -4}}$, we see that $\eta_q \leq R$ implies  that
$$
\frac{c_1}{c_2 e} \leq \frac{R^{5\cdot 2^{d-3} - 2}}{b^{\frac{3}{2}}c_2^{\frac{3}{2}} e}.
$$ 
As a squareful number is a product of a square and a cube,  the hyperbola method shows that the number of squareful integers less than or equal to $X$ is at most $3X^{1/2}$.  Thus for a fixed choice of $b$ and $c_2$, the number of choices of $c_1$ for which $\eta_q \leq R$ is at most
$$
3\sum_{e \mid c_2} \frac{R^{5\cdot 2^{d-4} - 1}}{b^{\frac{3}{4}}c_2^{\frac{3}{4}} e^{\frac{1}{2}}}.
$$ 
Using the fact that $\eta_q \geq (b^3c_1c_2^2)^{\frac{1}{2^d-2}}$ we see that $\eta_q \leq R$ implies that for fixed $b$ we have 
$
c_2 \leq R^{2^{d-1} - 1}/b^{\frac{3}{2}}.
$
Hence the number of choices for $q$ for which $\eta_q \leq R$ is of order 
\begin{align*}
 \sum_{b} \sum_{c_2 \leq \frac{R^{2^{d-1}}}{Rb^{\frac{3}{2}}}} \sum_{e\mid c_2}  \frac{R^{5\cdot 2^{d-4} - 1}}{b^{\frac{3}{4}}c_2^{\frac{3}{4}} e^{\frac{1}{2}}}
&\ll R^{5\cdot 2^{d-4} - 1}\sum_{b} b^{-\frac{3}{4}} \sum_{e f \leq \frac{R^{2^{d-1}}}{Rb^{\frac{3}{2}}}} f^{-\frac{3}{4}} e^{-\frac{5}{4}}\\
& \ll R^{5\cdot 2^{d-4} - 1}\sum_{b} b^{-\frac{3}{4}} \brac{ \frac{R^{2^{d-1}}}{Rb^{3/2}}}^{\frac{1}{4}}\\
& \ll R^{5\cdot 2^{d-4} - 1+ 2^{d-3} - \frac{1}{4}}.
\end{align*}
This completes the proof of the lemma.
\end{proof}

We are now in a position to use information coming from $W_k(\alpha,q)$ in our definition of $M(\alpha, q)$.  We first show that both the $P^{-2^{1-d}}$ and $qP^{-k}$ terms appearing in $W_k(\alpha,q)$ are negligible when $\eta_q \leq P^{\gamma}$ and $n-\sigma > \frac{3}{4}d2^d - 2d$.  To this end, define
\begin{equation}\label{eq:Wk}
W_k^*(\alpha,q) = \max\set{\norm{q\alpha}^{\recip{B_k+V_k}},\min\bigset{\eta_q^{-1}, (\norm{q\alpha}P^k)^{-\recip{B_k}}}}
\end{equation}
and
$$
M^*(\alpha, q): = \min\set{ W^*_3(\alpha, q), \dots, W^*_d(\alpha, q)}.
$$
Then we have the following result.

\begin{lemma}\label{M to M star lemma} 
Assume that  $n-\sigma > \tfrac{3}{4}d2^d -2d$.
Then there exists  $i\in \set{1, 2}$  such that 
\begin{align*}
\sum_{P^{\Delta_i} \leq q\leq P^{\xi}} 
 &\int_{P^{-d_i} \leq \norm{q\alpha} \leq P^{-\xi} }M(\alpha, q)^{n-\sigma} \intd\alpha \\
& \ll P^{-d-\Omega(1)}
+ \sum_{P^{\gamma_i}\leq \eta_q\leq P^{\gamma} } \int_{P^{-d_i} \leq \norm{q\alpha} \leq P^{-\xi} }
M^*(\alpha, q)^{n-\sigma} \intd\alpha,
\end{align*}
where 
$$
\gamma_i = \frac{\Delta_i}{5\cdot 2^{d-2}-4}.
$$
\end{lemma}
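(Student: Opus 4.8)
The plan is to reduce to the range $\eta_q\le P^\gamma$, prove the \emph{pointwise} inequality $M(\alpha,q)\ll P^{-\gamma}+M^*(\alpha,q)$ there, and then raise to the power $n-\sigma$ and sum. First, when $\eta_q>P^\gamma$ the discussion preceding the statement gives $M(\alpha,q)^{n-\sigma}\ll P^{-d-\Omega(1)}$ pointwise (from \eqref{eta bound eqn} and $n-\sigma>\tfrac34 d2^d-2d$); since $\sum_{P^{\Delta_i}\le q\le P^\xi}\int_{P^{-d_i}\le\norm{q\alpha}\le P^{-\xi}}\intd\alpha\ll1$ for either $i\in\set{1,2}$, the contribution of such $q$ is absorbed into the error term. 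For the remaining $q$, namely those with $\eta_q\le P^\gamma$, one has $\eta_q\ge(b^3c_1^2c_2)^{1/(5\cdot2^{d-2}-4)}\ge q^{1/(5\cdot2^{d-2}-4)}$, so together with $q\ge P^{\Delta_i}$ and the identity $\gamma(5\cdot2^{d-2}-4)=\xi$ this gives $q\le P^\xi$ and $\eta_q\ge P^{\gamma_i}$; hence every such $q$ lies in the range $P^{\gamma_i}\le\eta_q\le P^\gamma$ of the right-hand side, and it suffices to establish $M(\alpha,q)^{n-\sigma}\ll P^{-d-\Omega(1)}+M^*(\alpha,q)^{n-\sigma}$ on this range.

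The crux is the pointwise bound $M(\alpha,q)\ll P^{-\gamma}+M^*(\alpha,q)$, valid whenever $q\le P^\xi$ and $\norm{q\alpha}\le P^{-\xi}$. It rests on three numerical facts extracted from \eqref{eq:xi}: $2^{1-d}\ge\gamma$ for $d\ge3$, $(2-\xi)/2^{d-2}=\gamma$, and $(\xi-1)/(2^{d-1}-2)=\gamma$. These give respectively $P^{-2^{1-d}}\le P^{-\gamma}$, then $(qP^{-2})^{1/2^{d-2}}\le P^{(\xi-2)/2^{d-2}}=P^{-\gamma}$ for all $q\le P^\xi$ (since $\xi<2$), and $(\norm{q\alpha}P)^{1/(2^{d-1}-2)}\le P^{(1-\xi)/(2^{d-1}-2)}=P^{-\gamma}$ for $\norm{q\alpha}\le P^{-\xi}$ (since $\xi>1$). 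Feeding the last two into \eqref{C(alpha, q) defn} yields $M(\alpha,q)\le C(\alpha,q)\ll P^{-\gamma}+\eta_q^{-1}$, which already settles the case $\eta_q^{-1}\le M^*(\alpha,q)$. In the complementary case $M^*(\alpha,q)<\eta_q^{-1}$, choose $k$ with $M^*(\alpha,q)=W_k^*(\alpha,q)$; since $W_k^*<\eta_q^{-1}$, the inner minimum in \eqref{eq:Wk} cannot equal $\eta_q^{-1}$, so $(\norm{q\alpha}P^k)^{-1/B_k}\le\eta_q^{-1}$ and $W_k^*=\max\set{\norm{q\alpha}^{1/(B_k+V_k)},(\norm{q\alpha}P^k)^{-1/B_k}}$, whence both $\norm{q\alpha}^{1/(B_k+V_k)}$ and $(\norm{q\alpha}P^k)^{-1/B_k}$ are $\le M^*$. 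Now bound $M\le W_k$: its first term $P^{-2^{1-d}}$ is $\le P^{-\gamma}$, its $\norm{q\alpha}$-term is $\le M^*$, its term $(qP^{-k})^{1/B_k}$ is $\le(\norm{q\alpha}P^k)^{-1/B_k}\le M^*$ because $q\norm{q\alpha}\le P^\xi\cdot P^{-\xi}=1$, and its last term $\min\set{q^{-1/(B_k+V_k)},(\norm{q\alpha}P^k)^{-1/B_k}}$ is $\le(\norm{q\alpha}P^k)^{-1/B_k}\le M^*$; so $M(\alpha,q)\ll P^{-\gamma}+M^*(\alpha,q)$ in this case too.

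Finally, $n-\sigma$ is an integer strictly exceeding $\tfrac34 d2^d-2d=d(3\cdot2^{d-2}-2)$ and $\gamma=(3\cdot2^{d-2}-2)^{-1}$, so $\gamma(n-\sigma)\ge d+\gamma$ and hence $(P^{-\gamma})^{n-\sigma}\ll P^{-d-\Omega(1)}$. Raising the pointwise bound to the power $n-\sigma$ gives $M(\alpha,q)^{n-\sigma}\ll P^{-d-\Omega(1)}+M^*(\alpha,q)^{n-\sigma}$; summing over the surviving $q$ (all with $P^{\gamma_i}\le\eta_q\le P^\gamma$) and integrating in $\alpha$, the first term contributes $\ll P^{-d-\Omega(1)}$ by the bound $\sum\int\intd\alpha\ll1$ and the second is at most the right-hand side of the lemma. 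I anticipate the only delicate points to be bookkeeping: checking the three identities above from the definitions of $\gamma$ and $\xi$ (and that $\gamma(\tfrac34 d2^d-2d)=d$ exactly, so the threshold on $n-\sigma$ is precisely what makes every discarded $P^{-\gamma}$-term negligible), and cleanly separating the $\max$ and $\min$ in $W_k^*$ in the second case.
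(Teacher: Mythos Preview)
Your proof is correct and takes a genuinely different, more elementary route than the paper's. The paper handles the troublesome term $(qP^{-k})^{1/B_k}$ in $W_k$ by summing it over those $q$ with $\eta_q\le P^\gamma$ and invoking the counting estimate of Lemma~\ref{restricted theta_q estimate} (namely $\#\{q:\eta_q\le R\}\ll R^{A_d}$), together with the crude bound $q\le P^\xi$; this then forces a further numerical check involving $A_d$, $B_k$, $\xi$ and $\gamma$ for each $3\le k\le d$. You instead dispose of this term \emph{pointwise}: the observation $q\norm{q\alpha}\le P^\xi\cdot P^{-\xi}=1$ on the range in question gives $(qP^{-k})^{1/B_k}\le(\norm{q\alpha}P^k)^{-1/B_k}$, and in your Case~2 the latter is already $\le M^*$. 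This sidesteps Lemma~\ref{restricted theta_q estimate} entirely and yields the clean pointwise inequality $M(\alpha,q)\ll P^{-\gamma}+M^*(\alpha,q)$, from which the lemma follows after raising to the power $n-\sigma$ and using $\gamma(n-\sigma)\ge d+\gamma$ (here you correctly exploit that $\tfrac34 d2^d-2d=d/\gamma$ is an integer, so that the strict inequality forces $n-\sigma\ge d/\gamma+1$). The paper's approach is consistent with how $A_d$ and Lemma~\ref{restricted theta_q estimate} are deployed throughout \S\ref{s:minor}; your approach buys simplicity and decouples this particular lemma from the counting estimate altogether.
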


\begin{proof}  If $n-\sigma > \tfrac{3}{4}d2^d -2d$ and $d \geq 3$ then $(n-\sigma)/2^{d-1} > d$, so that
$$
P^{-2^{1-d} (n-\sigma)} \ll P^{-d - \Omega(1)}.
$$
This deals with the $P^{-2^{1-d}}$ term.

For the second term, we note from \eqref{eta defn} that 
$$
\eta_q \geq q^{\frac{1}{5\cdot 2^{d-2}-4}}.
$$
Combining this with \eqref{W_k defn} and \eqref{eta bound eqn} one sees that the lemma follows if for each $3 \leq k \leq d$ we have
$$
\sum_{P^{\gamma_i}\leq \eta_q\leq P^{\gamma}}  P^{-\xi}   (qP^{-k})^{\frac{n-\sigma}{B_k}} \ll P^{-d-\Omega(1)}.
$$
Here we have used the fact that the set $\{\alpha \in \T:\norm{q\alpha} \leq P^{-\xi}\}$ has measure $2P^{-\xi}$.  Using Lemma \ref{restricted theta_q estimate} and the bound $q \leq P^{\xi}$ we have  
\begin{align*}
 P^{-\xi}\sum_{\eta_q \leq P^{\gamma}} (qP^{-k})^{\frac{n-\sigma}{B_k}} & \leq  P^{\xi\brac{\frac{n-\sigma}{B_k}-1}-\frac{k(n-\sigma)}{B_k}}\sum_{\eta_q \leq P^{\gamma}}1\\ & \leq  P^{\xi\brac{\frac{n-\sigma}{B_k}-1}-\frac{k(n-\sigma)}{B_k} + \gamma A_d}.
\end{align*}
One can check from the definitions \eqref{eq:xi} and \eqref{A_d defn} that $\gamma A_d \leq \frac{7}{12}$ and  $\xi \leq \frac{5}{3}$.  Incorporating this together with the fact that $\frac{n-\sigma}{B_k} -1 \geq 0$, we obtain
$$
P^{\xi\brac{\frac{n-\sigma}{B_k}-1}-\frac{k(n-\sigma)}{B_k} + \gamma A_d} \leq P^{-(k-\frac{5}{3})\frac{n-\sigma}{B_k} - \frac{13}{12}}.
$$
To complete the proof of the lemma, we require that 
$$
 \frac{(n-\sigma)(k - \tfrac{5}{3})}{B_k} + \frac{13}{12} > d
$$
for $k \geq 3$. This is equivalent to 
$$
n-\sigma >    \trecip{2}\Bigbrac{\tfrac{k-1}{k-\frac{5}{3}}} \bigbrac{d-\tfrac{13}{12}} 2^{d}=\trecip{2}\Bigbrac{1 + \tfrac{2}{3k - 5}}  \bigbrac{d-\tfrac{13}{12}} 2^{d}.
$$
But $\frac{2}{3k-5} \leq \recip{2}$ (as $k \geq 3$) and the required bound is thus implied by the assumption that $n - \sigma > \tfrac{3}{4} d 2^d - 2d$, together with the estimate $\frac{3\cdot13}{4\cdot12}2^d \geq 2d$, which is valid for $d\geq 3$.
\end{proof}

In view of the preceding result, it remains to analyse the contribution from the term $M^*(\alpha,q)$.
The following result represents a key step in our argument and provides us with a concrete condition under which the bulk of this contribution is satisfactory.

\begin{lemma}\label{inductive argument lemma}  Let $k \geq 3$ and suppose that for each $\ell\in \{k,\dots,d\}$ 
we have 
\begin{equation}\label{reformulated l condition}
n- \sigma > A_d + \tfrac{d}{\ell} B_\ell + (\tfrac{d}{\ell}-1)(B_{\ell+1} + V_{\ell+1}).
\end{equation}
Then
$$
M^*(\alpha,q) \ll E(\alpha,q) +\norm{q\alpha}^{\recip{B_{k}+V_{k}}},
$$
where 
\begin{equation}\label{error estimate}
\sum_{P^{\gamma_i}\leq \eta_q\leq P^{\gamma} } \int_{P^{-d_i} \leq \norm{q\alpha} \leq P^{-\xi} } E(\alpha,q)^{n-\sigma}  \intd\alpha \ll P^{-d - \Omega(1)}.
\end{equation}
\end{lemma}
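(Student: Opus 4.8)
The plan is to prove the statement by downward induction on $k$, starting at $k=d$ and descending to $k=3$. Write $\beta_\ell:=(B_\ell+V_\ell)^{-1}$, so that $M^*(\alpha,q)\le W^*_\ell(\alpha,q)$ for every $3\le\ell\le d$ by \eqref{eq:Wk}. At level $k$ we shall produce an error term $E$ with $M^*(\alpha,q)\ll E(\alpha,q)+\norm{q\alpha}^{\beta_k}$ and with $E$ satisfying \eqref{error estimate}; running the induction down to a given $k$ will consume exactly the hypotheses \eqref{reformulated l condition} for $\ell\in\set{k,\dots,d}$. The elementary tool used repeatedly is the interpolation bound $\min\set{a_1,\dots,a_r}^{n-\sigma}\le a_1^{t_1}\dotsm a_r^{t_r}$, valid for non-negative reals and exponents $t_j\ge0$ summing to $n-\sigma$.

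For the base case $k=d$ we have $V_d=0$, so $W^*_d(\alpha,q)\le \norm{q\alpha}^{\beta_d}+\min\set{\eta_q^{-1},(\norm{q\alpha}P^d)^{-1/B_d}}$, and one takes $E(\alpha,q)=\min\set{\eta_q^{-1},(\norm{q\alpha}P^d)^{-1/B_d}}$. Interpolating gives $E(\alpha,q)^{n-\sigma}\le \eta_q^{-t}(\norm{q\alpha}P^d)^{-(n-\sigma-t)/B_d}$, and I would choose $t$ so that $\beta:=(n-\sigma-t)/B_d$ is marginally larger than $1$; this is permissible since then $t=n-\sigma-\beta B_d$ lies just below $n-\sigma-B_d$, and \eqref{reformulated l condition} at $\ell=d$ says $n-\sigma>A_d+B_d$, whence $t>A_d$. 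The bound $t>A_d$ keeps $\sum_{\eta_q\le P^{\gamma}}\eta_q^{-t}$ convergent via Lemma \ref{restricted theta_q estimate} (and, when $i=1$, decaying by a power of $P$, since $q\ge P^{\Delta}$ forces $\eta_q\ge P^{\gamma_1}$), while $\beta>1$ makes $\int_{P^{-d_i}\le\norm{q\alpha}\le P^{-\xi}}\norm{q\alpha}^{-\beta}\intd\alpha$ governed by the endpoint $\norm{q\alpha}=P^{-d_i}$. Multiplying by the factor $P^{-d\beta}$ coming from the $P^d$ inside $(\norm{q\alpha}P^d)^{-1/B_d}$, a short computation --- treating $d_1=d+2$ and $d_2=d-\Delta$ separately --- yields $\sum_q\int E^{n-\sigma}\intd\alpha\ll P^{-d-\Omega(1)}$, the surplus arising from the slack $\beta>1$ (and, for $i=2$, from the positivity of $\Delta$).

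For the inductive step, fix $3\le k\le d-1$ and assume the conclusion at level $k+1$, that is $M^*(\alpha,q)\ll E_{k+1}(\alpha,q)+\norm{q\alpha}^{\beta_{k+1}}$ with $E_{k+1}$ satisfying \eqref{error estimate}. Since also $M^*(\alpha,q)\le W^*_k(\alpha,q)\le\norm{q\alpha}^{\beta_k}+\min\set{\eta_q^{-1},(\norm{q\alpha}P^k)^{-1/B_k}}$, the elementary inequality $\min\set{X+Y,\,Z+W}\le X+Z+\min\set{Y,W}$ for non-negative reals gives
$$
M^*(\alpha,q)\ll E_{k+1}(\alpha,q)+\norm{q\alpha}^{\beta_k}+\min\set{\norm{q\alpha}^{\beta_{k+1}},\,\eta_q^{-1},\,(\norm{q\alpha}P^k)^{-1/B_k}},
$$
so one sets $E:=E_{k+1}+\min\set{\norm{q\alpha}^{\beta_{k+1}},\eta_q^{-1},(\norm{q\alpha}P^k)^{-1/B_k}}$. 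The summand $E_{k+1}$ satisfies \eqref{error estimate} by induction; for the triple minimum, interpolating yields a bound by $\norm{q\alpha}^{\beta_{k+1}t_1}\eta_q^{-t_2}(\norm{q\alpha}P^k)^{-t_3/B_k}$ with $t_1+t_2+t_3=n-\sigma$. I would impose the two near-equalities $t_2=A_d$ (keeping $\sum_{\eta_q\le P^{\gamma}}\eta_q^{-t_2}\ll1$) and $\beta_{k+1}t_1-t_3/B_k=-1$ (so that, after a tiny perturbation making this exponent slightly larger than $-1$, the $\norm{q\alpha}$-integral is governed by the endpoint $\norm{q\alpha}=P^{-\xi}$); these two relations together with $t_1+t_2+t_3=n-\sigma$ determine $t_1,t_3$ uniquely, and $t_1,t_3\ge0$ follows from $n-\sigma>A_d+B_k$, itself a consequence of \eqref{reformulated l condition} at $\ell=k$. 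The resulting contribution of the triple minimum is then bounded by a power $P^{-\theta}$ (up to a loss proportional to the perturbation parameter), and a direct computation --- in which all $\xi$-dependent terms cancel --- shows that $\theta>d$ precisely when $n-\sigma>A_d+\tfrac{d}{k}B_k+(\tfrac{d}{k}-1)(B_{k+1}+V_{k+1})$, i.e. exactly hypothesis \eqref{reformulated l condition} at $\ell=k$. This gives $\sum_q\int E^{n-\sigma}\intd\alpha\ll P^{-d-\Omega(1)}$ and closes the induction.

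I expect the inductive step to be the crux. Although the interpolation weights are forced once one decides to make $t_2$ and the $\norm{q\alpha}$-exponent extremal, at that choice all of $t_2\approx A_d$, $\beta_{k+1}t_1-t_3/B_k\approx-1$ and $t_1+t_2+t_3=n-\sigma$ are tight, so the perturbation must be handled delicately, using the strictness of \eqref{reformulated l condition} (and the fact that $n-\sigma$ is an integer) to absorb the losses; it is precisely this computation that forces the exact shape of \eqref{reformulated l condition}. By contrast, the split into the cases $i=1,2$ of Lemma \ref{subadditive bound lemma} is mere bookkeeping, influencing only which endpoint of the $\norm{q\alpha}$-range dominates and whether $\sum_q\eta_q^{-t}$ merely converges or additionally decays.
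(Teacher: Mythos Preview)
Your proposal is correct and follows essentially the same route as the paper: downward induction on $k$ starting at $k=d$, with the new error term at each step being the triple minimum $\min\{\eta_q^{-1},(\norm{q\alpha}P^k)^{-1/B_k},\norm{q\alpha}^{\beta_{k+1}}\}$ (a double minimum when $k=d$), estimated via the interpolation bound $\min\{a_1,\dots,a_r\}^{n-\sigma}\le a_1^{t_1}\dotsm a_r^{t_r}$ with exactly the weights $t_2\approx A_d$, $t_3\approx \tfrac{d}{k}B_k$, $t_1\approx(\tfrac{d}{k}-1)(B_{k+1}+V_{k+1})$ that force condition~\eqref{reformulated l condition}.

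The only methodological difference is that the paper first passes to a single dyadic rectangle $R\le\eta_q\le 2R$, $t\le\norm{q\alpha}\le 2t$ via pigeonhole (at the cost of a harmless $(\log P)^2$), and then checks the pointwise bound $R^{A_d-a}t^{1-b/B_k+c/(B_{k+1}+V_{k+1})}P^{-kb/B_k}\ll P^{-d-\Omega(1)}$, whereas you integrate and sum directly with slightly perturbed weights. These are equivalent bookkeeping devices; the paper's dyadic reduction simply avoids having to chase the perturbations. One small clarification on your base case: for $i=1$ the choice $\beta>1$ is actually a \emph{loss} in the $\norm{q\alpha}$-integral (the exponent becomes $-d+2(\beta-1)$), and the required saving comes entirely from the $\eta_q$-sum via $\eta_q\ge P^{\gamma_1}$, which you do note; your closing sentence attributing the surplus to ``the slack $\beta>1$'' is slightly misleading there. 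The paper packages both cases uniformly by observing that on the relevant range one always has either $R\ge P^{\delta}$ or $t\ge P^{\delta-d}$ for some $\delta\gg_{d,\Delta}1$. Also, the integrality of $n-\sigma$ is not needed; the strict inequality in \eqref{reformulated l condition} already supplies the slack.
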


\begin{proof}
We proceed by inducting downwards on $k$, starting with $k = d$.
By the definition \eqref{eq:Wk} of $W^*_d(\alpha,q)$, it suffices to prove that 
\begin{align*}
\sum_{P^{\gamma_i} \leq \eta_q \leq P^\gamma} \int_{P^{-d_i} \leq \norm{q\alpha} \leq P^{-\xi}} 
\hspace{-0.3cm}
\min\set{\eta_q^{-1}, \bigbrac{\norm{q\alpha}P^d}^{-\frac{1}{B_d}}}^{n-\sigma} \intd\alpha
\ll P^{-d-\Omega(1)}.
\end{align*}
Let us denote by $I_d$ the left hand side of this desired estimate.
Since 
$$
[P^{\gamma_i}, P^{\gamma} ] \times [P^{-d_i} , P^{-\xi}] \subset [1, P] \times [P^{-d-2}, 1],
$$
we can partition the ranges of $\alpha$ and $q$  occurring in $I_d$ into  $O(\log^2 P)$ dyadic rectangles.  Thus, by the pigeon-hole principle, there exists $\delta \gg_{d, \Delta} 1$, $R \leq P^{\gamma}$ and $t \leq P^{-\xi}$ such that 
$$
R \geq P^\delta \quad \mbox{or} \quad t \geq P^{\delta-d}, 
$$
 and
$$
I_d
 \ll (\log P)^2\sum_{R \leq \eta_q \leq 2R} \int_{t \leq \norm{q\alpha} \leq 2t}  \min\set{\eta_q^{-1}, \bigbrac{\norm{q\alpha}P^d}^{-\frac{1}{B_d}}}^{n-\sigma} \intd\alpha.
$$
Using Lemma \ref{restricted theta_q estimate}, we find that
\begin{align*}
I_d&\ll 
(\log P)^2\, 2t\,  \min\set{R^{-1}, \bigbrac{tP^d}^{-\frac{1}{B_d}}}^{n-\sigma}\sum_{R\leq \eta_q \leq 2R} 1\\
& \ll (\log P)^2 t R^{A_d}  \min\set{R^{-1}, \bigbrac{tP^d}^{-\frac{1}{B_d}}}^{n-\sigma}.
\end{align*}
Now 
for any $a, b, X, Y \geq 0$ we have the inequality
$
\min\set{X,Y}^{a+b} \leq X^aY^b.
$
Hence to establish the base case, it suffices to find $a,b \geq 0$ with $a+b = n-\sigma$ such that 
$$
R^{A_d -a}t^{1- \frac{b}{B_d}} P^{-\frac{db}{B_d}} \ll P^{-d-\Omega(1)}.
$$
Notice that when $\ell = d$, condition \eqref{reformulated l condition} reduces to 
$
n-\sigma > A_d + B_d.
$
There are two cases to consider.
Suppose first that $R \geq P^{\delta}$.   In this case we can take $a = A_d + \Omega(1)$ and $b = B_d$, which gives
$$
R^{A_d -a}t^{1- \frac{b}{B_d}} P^{-\frac{db}{B_d}} \leq P^{-\Omega(\delta) -d}. 
$$
Alternatively, we  suppose that $t \geq P^{\delta-d}$. But in this case we take $a =A_d $ and $b = B_d + \Omega(1)$, giving 
$$
R^{A_d -a}t^{1- \frac{b}{B_d}} P^{-\frac{db}{B_d}} \leq 
P^{-\Omega(\frac{\delta}{B_d}) - d}.
$$
The base case then follows.

Next we turn to the induction step.  Let us assume that $3 \leq k < d$.  By the induction hypothesis  we have
$$
M^*(\alpha,q) \ll E(\alpha,q) +\norm{q\alpha}^{\recip{B_{k+1}+V_{k+1}}},
$$  
where $E(\alpha, q)$ satisfies \eqref{error estimate}.  Combining this with the definition of $W^*_k(\alpha,q)$, it suffices to prove that
$$
\sum_{P^{\gamma_i} \leq \eta_q \leq P^\gamma} \int_{P^{-d_i} \leq \norm{q\alpha} \leq P^{-\xi}} 
\hspace{-0.5cm}
\min\set{\eta_q^{-1}, \bigbrac{\norm{q\alpha}P^k}^{-\frac{1}{B_k}}, \norm{q\alpha}^{\recip{B_{k+1} + V_{k+1}}}}^{n-\sigma} \intd\alpha
$$
is $O( P^{-d-\Omega(1)})$.
As before, breaking into dyadic rectangles, we see that it suffices to find $a,b,c \geq 0$ with $a+b+c = n-\sigma$ such that for $R \leq P^\gamma$ and $t \leq P^{-\xi}$ we have
$$
R^{A_d - a} t^{1 - \frac{b}{B_k} + \frac{c}{B_{k+1} + V_{k+1}}}P^{-\frac{kb}{B_k}} \ll P^{-d - \Omega(1)}.
$$
Recall our assumption \eqref{reformulated l condition}.  Taking 
$$
a = A_d, \quad  b = \tfrac{dB_k}{k}, \quad
 c = (\tfrac{d}{k}-1)(B_{k+1} + V_{k+1}) + \Omega_d(1),
$$
one readily verifies  that the desired estimate holds.
\end{proof}

The size of the term $\|q\alpha\|^{\frac{1}{B_k+V_k}}$ in Lemma 
\ref{inductive argument lemma} is smallest when $k$ is minimal.
When the condition \eqref{reformulated l condition} holds for every $3\leq \ell\leq d$, the following result shows that 
 the contribution from the term $\|q\alpha\|^{\frac{1}{B_3+V_3}}$ is satisfactory under the assumptions of Lemma \ref{lem:minor}.

\begin{lemma}\label{k=3 lemma}  Suppose that $n-\sigma > \frac{3}{4} d2^d - 2d$ and $d \geq 3$.  Then 
$$
\sum_{P^{\gamma_i}\leq \eta_q\leq P^{\gamma} } \int_{P^{-d_i} \leq \norm{q\alpha} \leq P^{-\xi} } \min\set{\eta_q^{-1}, \norm{q\alpha}^{\recip{B_3 + V_3}}}^{n-\sigma}  \intd\alpha \ll P^{-d - \Omega(1)}.
$$
\end{lemma}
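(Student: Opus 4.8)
The plan is to run the dyadic-box argument from the base case of Lemma~\ref{inductive argument lemma}, the new feature being that the factor $\eta_q^{-1}$ inside the minimum can be used to absorb the count of $q$ coming from Lemma~\ref{restricted theta_q estimate}. Write $L := B_3 + V_3 = 5\cdot 2^{d-2} - 2$ and recall from \eqref{eq:xi} that $\xi = \frac{5\cdot 2^{d-2}-4}{3\cdot 2^{d-2}-2}$. On the region of integration we have $1 \le \eta_q \le P^{\gamma}$ and $\norm{q\alpha} \le P^{-\xi}$, with $\gamma, \xi, d_i, \gamma_i$ all $O_d(1)$; hence this region is covered by $O(\log^2 P)$ dyadic boxes of the form $\set{R \le \eta_q \le 2R} \times \set{t \le \norm{q\alpha} \le 2t}$ with $1 \le R \le P^{\gamma}$ and $t \le 2P^{-\xi}$ (using $\eta_q \ge 1$). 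On such a box, Lemma~\ref{restricted theta_q estimate} bounds the number of relevant $q$ by $\ll R^{A_d}$; for each such $q$ the set $\set{\alpha : \norm{q\alpha} \le 2t}$ has measure $\ll t$; and the integrand there is at most $\min\set{R^{-1}, (2t)^{1/L}}^{n-\sigma}$. Thus the box contributes $\ll R^{A_d}\, t\, \min\set{R^{-1}, t^{1/L}}^{n-\sigma}$.

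I would then apply the elementary bound $\min\set{X,Y}^{a+b} \le X^aY^b$ (for $X,Y,a,b\ge 0$) with $a = A_d$ and $b = n-\sigma - A_d$; this is legitimate since $n-\sigma > \tfrac{3}{4}d2^d - 2d > A_d$ for $d\ge 3$. The choice $a = A_d$ makes the $R$-dependence disappear entirely --- so, unlike in the base case of Lemma~\ref{inductive argument lemma}, no lower bound on $\eta_q$ is needed --- and the box bound collapses to $\ll t^{1 + b/L}$. Since $t \le 2P^{-\xi}$ and $1 + b/L > 0$, this is $\ll P^{-\xi(1+b/L)}$. A direct computation gives $\xi(1 + b/L) - d = \tfrac{\xi}{L}\bigbrac{(n-\sigma) - A_d - L(\tfrac{d}{\xi}-1)}$, so the sum over the $O(\log^2 P)$ boxes will be $O(P^{-d-\Omega(1)})$ as soon as
$$
n-\sigma > A_d + (B_3+V_3)\Bigbrac{\tfrac{d}{\xi} - 1}.
$$

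The crux --- and the only non-routine point --- is verifying this numerical inequality from the hypothesis $n-\sigma > \tfrac{3}{4}d2^d - 2d$. I would reduce it to $\tfrac{3}{4}d2^d - 2d > A_d + (B_3+V_3)(\tfrac{d}{\xi}-1)$, and then, writing $m := 2^{d-2}$ (so that $A_d = \tfrac{7}{4}m - \tfrac{5}{4}$, $B_3+V_3 = 5m-2$ and $\tfrac{d}{\xi}-1 = \tfrac{3dm - 2d - 5m + 4}{5m-4}$), clear the positive denominator $5m-4$ and simplify; the inequality becomes
$$
65m^2 - 67m + 12 + d(16 - 24m) > 0.
$$
For $d = 3$ (hence $m = 2$) the left-hand side equals $42 > 0$, and for $d \ge 4$ one has $d \le m = 2^{d-2}$, so that $d(24m-16) \le 24m^2 - 16m$ and the left-hand side is at least $41m^2 - 51m + 12$, which is positive for every $m \ge 1$. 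This gives the required inequality, with $(n-\sigma) - A_d - L(\tfrac{d}{\xi}-1)$ exceeding a positive constant depending only on $d$; absorbing the factor $(\log P)^2$ into the resulting power saving then yields the stated bound with an $\Omega(1)$ depending only on $d$.
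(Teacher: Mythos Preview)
Your proof is correct and follows essentially the same route as the paper: dyadic decomposition in $(\eta_q,\norm{q\alpha})$, the count $\ll R^{A_d}$ from Lemma~\ref{restricted theta_q estimate}, the splitting $\min\{X,Y\}^{a+b}\le X^aY^b$ with $a=A_d$, and reduction to the numerical inequality $n-\sigma > A_d + (B_3+V_3)(\tfrac{d}{\xi}-1)$. The only difference is that the paper dismisses this last inequality as ``an easy calculation'', whereas you actually carry it out via the substitution $m=2^{d-2}$; your verification is correct.
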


\begin{proof}
An easy calculation reveals that 
$$
\tfrac{3}{4} d2^d - 2d \geq 
 A_d + (\tfrac{d}{\xi} - 1)(B_3 + V_3),
 $$
 for $d\geq 3$. Hence 
\begin{equation}\label{k=3 condition}
\begin{split}
n- \sigma & >   A_d + (\tfrac{d}{\xi} - 1)(B_3 + V_3),
\end{split}
\end{equation}
under the assumptions of the lemma. 
As before, we split into dyadic intervals and deduce that it suffices to find $a + b = n-\sigma$ with 
$$
R^{A_d - a} t^{1+ \frac{b}{B_3 + V_3}} \ll P^{-d - \Omega(1)},
$$
where $R \leq P^{\gamma}$ and $t \leq P^{-\xi}$.  Taking $a = A_d$, the inequality \eqref{k=3 condition} ensures that
$$
b = (\tfrac{d}{\xi} - 1)(B_3 + V_3) + \Omega_d(1).
$$
Hence
$$
R^{A_d - a} t^{1+ \frac{b}{B_3 + V_3}} \leq P^{-\xi(1+ \frac{b}{B_3 + V_3})} \leq P^{-d - \Omega_d(1)},
$$
as required.
\end{proof}

We now have everything in place to establish Lemma \ref{lem:minor}. 
Suppose first that $3\leq d\leq 9$ and 
$n-\sigma>\tfrac{3}{4} d 2^d - 2d$.
Combining Lemma \ref{subadditive bound lemma} and Lemma \ref{M to M star lemma}, there exists $i \in \set{1,2}$ such that
\begin{align*}
\int_{\m}|S(\alpha, P)| \intd\alpha \ll  
P^{n-d - \Omega(1)} + P^{n+\eps} 
\hspace{-0.3cm}
\sum_{P^{\gamma_i}\leq \eta_q\leq P^{\gamma} } \int_{P^{-d_i} \leq \norm{q\alpha} \leq P^{-\xi} }
\hspace{-0.3cm}
M^*(\alpha, q)^{n-\sigma} \intd\alpha
\end{align*}
One can check (most expediently by computer) that when $3 \leq d \leq 9$ and $3 \leq k \leq d$ we have
$$
\tfrac{3}{4} d 2^d - 2d + 1 > A_d + \tfrac{d}{k} B_k + (\tfrac{d}{k}-1)(B_{k+1} + V_{k+1}).
$$
The hypotheses of Lemma \ref{inductive argument lemma} are therefore satisfied with $k= 3$, so that
\begin{align*}
 \sum_{P^{\gamma_i}\leq \eta_q\leq P^{\gamma} } &\int_{P^{-d_i} \leq \norm{q\alpha} \leq P^{-\xi} } M^*(\alpha, q)^{n-\sigma} \intd\alpha\\& \ll \sum_{P^{\gamma_i}\leq \eta_q\leq P^{\gamma} } \int_{P^{-d_i} \leq \norm{q\alpha} \leq P^{-\xi} } \min\set{\eta_q^{-1}, \norm{q\alpha}^{\recip{B_3 + V_3}}}^{n-\sigma}  \intd\alpha.
\end{align*}
But the latter quantity is $O(P^{-d- \Omega(1)})$ by Lemma \ref{k=3 lemma}, which therefore concludes the proof of  Lemma \ref{lem:minor} when $3\leq d\leq 9$.

Suppose now that $d \geq 10$  and 
$n - \sigma \geq ({d-\frac{1}{2} \sqrt{d}}) 2^d$.
As above it  suffices to show that for all $k$ in the range $3 \leq k \leq d$ we have
$$
d 2^d - \sqrt{d}\ 2^{d-1} > A_d + \tfrac{d}{k} B_k + (\tfrac{d}{k}-1)(B_{k+1} + V_{k+1}).
$$
Substituting the definitions of $A_d, B_k$ and $V_k$ into the right-hand side, we see that it is strictly less than
\begin{align*}
d 2^d - \brac{ \tfrac{d}{2}\brac{\trecip{k} - \trecip{k2^{k-1}}} + \trecip{2}\brac{k-\tfrac{7}{8}}}2^d & <d 2^d - \Bigbrac{ \tfrac{d}{2}\brac{\trecip{2k}} + \trecip{2}\brac{\tfrac{k}{2}}}2^d\\
& = d2^d - \tfrac{\sqrt{d}}{4} \brac{\tfrac{\sqrt{d}}{k} + \tfrac{k}{\sqrt{d}}}2^d\\
& \leq d2^d - \sqrt{d}\cdot 2^{d-1}.
\end{align*}
This therefore concludes the proof of Lemma \ref{lem:minor}.

\begin{remark}\label{remark}
It is somewhat disappointing that we are unable to do better when $n\geq 10$. However, one easily confirms that
the maximum of the conditions  \eqref{reformulated l condition} is asymptotically at least $d 2^d$. Thus one cannot obtain the required minor arc bound with roughly  $\frac{3}{4} d 2^d$ variables.  
\end{remark}	

\section{The major arc asymptotic}\label{major arc section}

The purpose of this section is to establish \eqref{eq:major} under suitable hypotheses on the form $F$ and on the parameter $\Delta$ occurring in the definition \eqref{major arc def} of $\major$.    

According to our local solubility hypothesis in Theorems \ref{general d improvement theorem} and 
\ref{small d improvement theorem} 
we may assume that the system 
\eqref{eq'} has a solution over the reals. Thus  
there exists a vector
 $\x_0\in \RR^n$ such that $F(\x_0)=0$ and
$\nabla F (\x_0)\neq \mathbf{0}$. This vector is to be considered fixed once and for all in what follows. 
It will be convenient to work with a weight function 
that forces us to count points lying very close to $\x_0$. 
For any $\delta\in (0,1]$, we define the function $\omega: \RR^n 
\rightarrow 
\RR_{\geq 0}$ by 
$$
\omega(\x):=w\left(\delta^{-1}\|\x-\x_0\|_2\right),
$$
where $\|\y\|_2=\sqrt{y_1^2+\cdots+y_n^2}$ and 
$$
w(x):=\begin{cases}
e^{-\frac{1}{1-x^2}}, & \mbox{if $|x|<1$},\\
0, & \mbox{if $|x|\geq 1$}.
\end{cases}
$$
We will require  $\delta$ to be 
sufficiently small, with  $1\ll\delta \leq 1$.
It is clear that $\omega$ belongs to the class $ \mathcal{S}^+(\vc)$
from Definition \ref{smooth}
 for a suitable 
infinite tuple $\vc=(c,c_0,c_1,\dots)$ depending on $\delta$ and $\x_0$.

Let us define 
$$
S_q(a):=\sum_{\y\bmod{q}}e_q\big(aF(\y)\big),
$$
for $a\in \ZZ$ such that $(a,q)=1$, together with the associated 
truncated singular series
\begin{equation}\label{21-sing}
\ss(R)=\sum_{q\leq R} \frac{1}{q^n}\sum_{\substack{a \bmod{q}\\ 
(a,q)=1}}
S_q(a),
\end{equation}
for any $R>1$. 
We put  $\ss=\lim_{R\rightarrow \infty}\ss(R)$, whenever this limit exists.
Next, let
\begin{equation}
  \label{21-si}
\mathfrak{I}(R)=\int_{-R}^{R}  
\int_{\RR^n} \omega(\x)
e\big(\gamma F(\x)\big)\d\x\d \gamma,
\end{equation}
for any $R>0$. We put  $\mathfrak{I}=\lim_{R\rightarrow \infty}
\mathfrak{I}(R)$, whenever the limit  exists.  
The main aim of this section is to establish the
following result.

\begin{lemma}\label{lem:major}
Assume that $n-\sigma> \frac{3}{4}(d-1)2^{d}$.   Then the singular series 
$\ss$ and the singular integral $\mathfrak{I}$ are absolutely 
convergent.  Moreover, if we choose
$\Delta=\frac{1}{6}$
then
$$
\int_{\major}S(\alpha)\d\alpha=\ss \mathfrak{I}P^{n-d}
+O(P^{n-d-\Omega(1)}).
$$
\end{lemma}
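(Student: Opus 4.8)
The plan is the classical two–step analysis of the major arcs. On each arc, write $\alpha = a/q + \beta$ with $q \leq P^\Delta$ and $|\beta| \leq P^{\Delta-d}$, and establish the approximation
$$
S(\alpha,P) = q^{-n} S_q(a)\, I_P(\beta) + (\text{negligible}), \qquad I_P(\beta) := \int_{\RR^n} \omega(\x/P) e(\beta F(\x))\,\d\x ,
$$
then sum over $a,q$ and integrate over $\beta$, letting the truncations tend to infinity. Throughout one uses that $F$ is the fixed form of Theorem \ref{general d improvement theorem} or \ref{small d improvement theorem}, so $\mathrm{Height}(F) = O(1)$, and that $\omega$ is the fixed smooth bump supported in the ball of radius $\delta$ about $\x_0$, with $\delta$ as small as we wish.

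\textbf{The singular integral.} Substituting $\x = P\y$ and using homogeneity of $F$ gives $I_P(\beta) = P^n I(\beta P^d)$, where $I(\gamma) := \int_{\RR^n}\omega(\y) e(\gamma F(\y))\,\d\y$. Since $\nabla F(\x_0) \neq \0$ and $\nabla F$ is continuous, choosing $\delta$ small enough that $|\nabla F| \geq c_0 > 0$ on $\supp(\omega)$ allows us to integrate by parts $N$ times against the non-stationary phase $\gamma F$, yielding $I(\gamma) \ll_N (1+|\gamma|)^{-N}$ for every $N \geq 0$. In particular $\mathfrak{I}(R)$ converges absolutely as $R \to \infty$ and $\mathfrak{I}(R) = \mathfrak{I} + O_N(R^{-N})$; no hypothesis on $n-\sigma$ enters here, because $\omega$ is concentrated at a nonsingular real point.

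\textbf{The singular series.} By multiplicativity $S_q(a) = \prod_{p^e \| q} S_{p^e}(a_{p^e})$, and for each prime power one has the complete–sum analogue of the Weyl/van der Corput bound (as in Birch \cite{birch}), refined using the $r$-value bookkeeping of \S\ref{s:exp} to control the primes $p \mid q$ at which $\dim \Sing_p(F^{[d]})$ exceeds its generic value $\sigma$. This gives $q^{-n}\sum_{(a,q)=1}|S_q(a)| \ll q^{1+\eps}$ times a quantity summing geometrically once $n-\sigma$ exceeds a constant multiple of $2^{d}$; the hypothesis $n - \sigma > \tfrac34(d-1)2^d$ is more than sufficient. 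Consequently $\ss$ converges absolutely and $\ss(R) = \ss + O(R^{-\eta})$ for some $\eta > 0$.

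\textbf{The approximation on the major arcs and assembly.} Fix $\alpha = a/q + \beta \in \major$ and split $\x = \z + q\w$ with $\z$ running over $\ZZ^n/q\ZZ^n$; since $F(\z + q\w) \equiv F(\z) \bmod{q}$,
$$
S(\alpha,P) = \sum_{\z \bmod{q}} e_q(aF(\z)) \sum_{\w \in \ZZ^n} g(\z + q\w), \qquad g(\x) := \omega(\x/P) e(\beta F(\x)) .
$$
On $\supp(g) \subset \{|\x| \ll P\}$ one has $|\partial^{\vepsilon} g| \ll \bigbrac{P^{-1}(1+|\beta| P^d)}^{|\vepsilon|} \ll P^{(\Delta-1)|\vepsilon|}$, using $\mathrm{Height}(F) = O(1)$ and $|\beta| P^d \leq P^\Delta$; thus $g$ is smooth at scale $\gg P^{1-\Delta}$. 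Applying Poisson summation on the lattice $q\ZZ^n$ (equivalently the partial summation formula of Lemma \ref{partial summation lemma}) and using $q \leq P^\Delta$ with $\Delta = \tfrac16 < \tfrac12$, the nonzero frequencies contribute $\ll_N q^{-n} P^{n - N(1-2\Delta)}$ for every $N$, so
$$
S(\alpha,P) = q^{-n} S_q(a) I_P(\beta) + O_N\bigbrac{P^{n-N}} \qquad \text{uniformly on } \major .
$$
Integrating over $\major$, the error contributes $O_N(P^{n-N}\cdot\Area(\major)) = O_N(P^{-N})$ after renaming $N$, while the main term factorises, and the substitution $\gamma = \beta P^d$ gives
$$
\int_{\major} S(\alpha,P)\,\d\alpha = \ss(P^\Delta)\, P^{n-d}\!\! \int_{|\gamma| \leq P^\Delta}\!\! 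I(\gamma)\,\d\gamma + O_N(P^{-N}) = \bigbrac{\ss + O(P^{-\eta/6})}\bigbrac{\mathfrak{I} + O_N(P^{-N})} P^{n-d} ,
$$
using the decay of $I$ to extend the $\gamma$-integral to $\RR$ and the convergence rate of $\ss$. Multiplying out yields $\ss\,\mathfrak{I}\,P^{n-d} + O(P^{n-d-\Omega(1)})$, as required.

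\textbf{Main obstacle.} The only genuinely delicate ingredient is the complete–sum estimate for $S_q(a)$: the generic Weyl bound degrades at primes $p \mid q$ where $F^{[d]}$ reduces to something more singular than it is over $\QQ$, and it is exactly to absorb these primes that one threads the $r$-value estimates of \S\ref{s:exp} through the argument — which is also why a hypothesis of order $d 2^d$ (here $\tfrac34(d-1)2^d$) is imposed rather than the bare $n - \sigma \gg 2^d$ that the generic estimate alone would need. The non-stationary phase bound for $\mathfrak{I}$ and the Poisson approximation on $\major$ are routine given that $\mathrm{Height}(F) = O(1)$ and that $\omega$ localises to the nonsingular point $\x_0$.
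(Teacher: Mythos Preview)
Your singular integral and major-arc approximation are both correct, and in fact sharper than the paper's. Exploiting that $\omega$ is localised at the nonsingular real point $\x_0$, you obtain $I(\gamma)\ll_N(1+|\gamma|)^{-N}$ by non-stationary phase; the paper instead feeds the Weyl bound (Lemma~\ref{weyl bound}) back through \eqref{eq:train1} and extracts only $I(\gamma)\ll |\gamma|^{-(n-\sigma)/((d-1)2^{d-1})+\eps}$, which is why it records the auxiliary hypothesis $n-\sigma>\tfrac12(d-1)2^d$ for the convergence of $\mathfrak{I}$. Likewise your Poisson-summation error $O_N(P^{n-N})$ improves on the paper's $O(P^{n-1+2\Delta})$ coming from Euler--Maclaurin (the extra $q^{-n}$ in your stated bound is not actually there after one bounds the $\z$-sum trivially, but this is harmless). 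These are genuine simplifications.

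The gap is the singular series. You invoke ``the $r$-value bookkeeping of \S\ref{s:exp}'' to control primes of bad reduction and attribute the threshold $n-\sigma>\tfrac34(d-1)2^d$ to absorbing those primes. Both the tool and the diagnosis are wrong. The $r$-values of \S\ref{s:exp} belong to the cubic Poisson bound for smooth-weighted sums over $\ZZ^n$ and play no role in the major-arc analysis; the finitely many primes with $\sigma_p(F)>\sigma$ cost only an $O_F(1)$ constant. What the paper actually uses is the combination of Birch's prime-power estimate $A(p^k)\ll p^{k(1-(n-\sigma)/((d-1)2^{d-1}))+\eps}$ for $k\geq 2$ with Deligne's bound $A(p)\ll p^{1-(n-\sigma)/2}$ for $k=1$. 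Writing $q=uv$ with $u$ squarefree and $v$ squarefull, the Deligne piece makes $\sum_u|A(u)|$ converge with room to spare, while the Birch piece on $v$, together with the fact that there are $O(X^{1/2})$ squarefull integers up to $X$, converges precisely when $(n-\sigma)/((d-1)2^{d-1})>3/2$. That inequality is the origin of the hypothesis $n-\sigma>\tfrac34(d-1)2^d$; it has nothing to do with bad reduction. Your sketch contains none of this mechanism, and the phrase ``$q^{1+\eps}$ times a quantity summing geometrically'' does not substitute for it. If you replace your singular-series paragraph with the Birch--Deligne argument just described, the rest of your outline yields a complete (and somewhat cleaner) proof.
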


Here the leading constant 
$\ss \mathfrak{I}$ is a product of local densities and, in the usual way, one has 
$\ss \mathfrak{I}>0$ under the local solubility assumptions of 
 Theorems 
 \ref{general d improvement theorem} and 
\ref{small d improvement theorem}.
Once taken in conjunction with Lemma \ref{lem:minor}, the 
 proof of our main results   will therefore stand once Lemma \ref{lem:major} is verified. 

\medskip

Our treatment of Lemma \ref{lem:major} is standard and  closely follows the argument of Birch 
\cite[\S 5]{birch}, as revisited in  \cite[\S 10]{41}.
Thus we shall allow ourselves to be brief.
For $q\leq P^{\Delta}$ and  $a\in \ZZ$ coprime to $q$, 
let us put $\al=a/q+\theta$ for any 
$\alpha\in  \major_{a,q}$.
To begin with, the argument of 
\cite[Lemma~5.1]{birch} (cf. \cite[Eq.~(10.5)]{41}) 
easily gives
\begin{equation}
  \label{eq:train1}
S(\alpha, P)=q^{-n}P^n S_q(a) I(\theta P^d) +O\left(
q|\theta|P^{n+d-1}+qP^{n-1}\right),
\end{equation}
where $S_q(a)$ is given above and we put
$$
I(\gamma)=
\int_{\RR^n} \omega(\x)
e\big(\gamma F(\x)\big)\d\x
$$
for any $\gamma\in \RR$.
Recalling that $|\theta|\leq P^{-d+\Delta}$ and $q\leq P^\Delta$ on the major arcs, this  implies that 
$$
S(\alpha, P)=q^{-n}P^n S_q(a) I(\theta P^d) +O(P^{n-1+2\Delta}).
$$
Noting that  the major arcs have measure
$O(P^{-d+3\Delta})$, it  now follows that
\begin{equation}\label{21-maj}
\int_{\major} S(\alpha, P)\d\al=
P^{n-d}\mathfrak{S}(P^\Delta)\mathfrak{I}(P^\Delta) +O(
P^{n-d-1+5\Delta}), 
\end{equation}
where $\mathfrak{S}(P^\Delta)$ is given by \eqref{21-sing}, and
$\mathfrak{I}(P^\Delta)$ is given by \eqref{21-si}.

Next we claim that 
\begin{equation}\label{eq:integral-lemma}
I(\gamma)\ll \min\set{1,|\gamma|^{-\frac{n-\sigma}{(d-1)2^{d-1}}+\eps} }.
\end{equation}
The argument for this is based on  \cite[Lemma~5.2]{birch}
(cf. \cite[Lemma~24]{41}).
The estimate $I(\gamma)\ll 1$ is trivial. In proving the second
estimate we may clearly assume that $|\gamma|>1$. 
Taking $a=0$ and $q=1$ in \eqref{eq:train1}, we deduce that
$$
S(\alpha, P)=P^n I(\al P^d) +O\big((|\al| P^d+1)P^{n-1}\big),
$$
for any $P\geq 1$.  On the other hand, assuming that $|\al|<P^{-\frac{d}{2}}$,
Lemma \ref{weyl bound} gives
$$
S(\alpha, P) \ll P^{n+\eps} (|\alpha|P^{d})^{-\frac{n-\sigma}{(d-1)2^{d-1}}}.
$$
Writing $\al P^d=\gamma$, we may combine these estimates to obtain
$$
I(\gamma)\ll 
|\gamma|
^{-\frac{n-\sigma}{(d-1)2^{d-1}}}P^{\eps}
+|\gamma| P^{-1},
$$
when $|\gamma|<P^{\frac{d}{2}}$.  Finally we observe that $I(\gamma)$ is
independent of $P$.  Thus we are free to choose
$P=|\gamma|^{1-\frac{n-\sigma}{(d-1)2^{d-1}}}$, which thereby establishes  \eqref{eq:integral-lemma}.

Suppose that $n-\sigma> \frac{1}{2}(d-1)2^{d}$.
It now follows from \eqref{eq:integral-lemma} that
\begin{align*}
\mathfrak{I}-\mathfrak{I}(R)
=\int_{|\gamma|\geq R} I(\gamma) \d\gamma
&\ll
\int_{R}^{\infty} \min\{1,\gamma^{-\frac{n-\sigma}{(d-1)2^{d-1}}+\eps}\} \d\gamma\\
&\ll R^{1
-\frac{n-\sigma}{(d-1)2^{d-1}}+\eps}.
\end{align*}
This shows that $\mathfrak{I}$ is absolutely convergent
for 
$n-\sigma> \frac{1}{2}(d-1)2^{d}$, which is more than enough for  Lemma \ref{lem:major}.

Next we need to show that 
\begin{equation}\label{eq:27-record}
\mathfrak{S}-\mathfrak{S}(R)\ll R^{-\eta},
\end{equation}
for some $\eta>0$, provided that 
for $n-\sigma>\frac{3}{4}(d-1)2^{d}$.  
Assuming this to be the case for the moment and 
observing that 
$\mathfrak{I}(P^\Delta)\ll 1$, 
it follows from \eqref{21-maj} that 
\begin{align*}
\int_{\mathfrak{M}} S(\alpha, P)\d\al
=~&
\ss P^{n-d}\mathfrak{I}(P^\Delta)
+O\big(P^{n-d-1+5\Delta}+P^{n-d-\Delta\eta}\big)\\
=~&
\ss \mathfrak{I}P^{n-d}\\
&+O\big(P^{n-d-1+5\Delta}+P^{n-d-\Delta\eta}+P^{n-d-\Delta
(\frac{n-\sigma}{(d-1)2^{d-1}}-1) +\eps}
\big).
\end{align*}
We therefore obtain the statement of Lemma \ref{lem:major} by choosing
$\Delta=\frac{1}{6}$ and taking $\eps>0$ to be
sufficiently small.

Turning finally to 
the proof of \eqref{eq:27-record}, 
we put 
$$
A(q)=q^{-n}\sum_{\substack{a
      \bmod{q}\\ (a,q)=1}}S_q(a).
$$
This is a multiplicative function of $q$. 
It follows from  Birch \cite[Lemma~5.4]{birch}
that
\begin{equation}
  \label{eq:ap-1}
  A(p^k)\ll p^{k(1-\frac{n-\sigma}{(d-1)2^{d-1}})+\eps}.
\end{equation}
When $k=1$ we instead call upon the estimate
$
A(p)\ll p^{1-\frac{n-\sigma}{2}}.
$ 
This is  established by induction on $\sigma$, the
inductive base $\sigma=0$ being taken care of by Deligne's 
estimate \cite{deligne}.  The general case
is reduced to this situation by appropriate hyperplane sections.

We now 
establish \eqref{eq:27-record} under the assumption that $n-\sigma>\frac{3}{4}(d-1)2^{d}$. 
Let us write $q=uv$, where $u$ is the square-free part of $q$. 
Then, by multiplicativity, we have
$
A(u)\ll u^{1-\frac{n-\sigma}{2}+\eps}.
$
Once combined with \eqref{eq:ap-1}, this 
shows there exists $\eta>0$ such that 
\begin{align*}
\big|\ss-\ss(R)\big| 
&\ll \sum_{q=uv>R} u^{1-\frac{n-\sigma}{2}+\eps}\
v^{1-\frac{n-\sigma}{(d-1)2^{d-1}}+\eps}\\
&\ll \sum_{q=uv>R} u^{-2-\eta} v^{-\frac{1}{2}-2\eta}\\
&\ll R^{-\eta}\sum_{u,v=1}^{\infty}u^{-2} v^{-\frac{1}{2}-\eta},
\end{align*}
provided that $\eps>0$ is taken to be sufficiently small.
Since  the number of square-full integers $v\in (V,2V]$ is $O(V^{1/2})$, 
the sum over $v$ is seen to be convergent, as is the sum over $u$.
The bound recorded in 
\eqref{eq:27-record} therefore  follows.

\end{document}